\newcommand{\beq}{\begin{equation*}}
\newcommand{\eeq}{\end{equation*}}
\newcommand{\beqn}{\begin{equation}}
\newcommand{\eeqn}{\end{equation}}
\newcommand{\beqa}{\begin{eqnarray*}}
\newcommand{\eeqa}{\end{eqnarray*}}
\newcommand{\beqan}{\begin{eqnarray}}
\newcommand{\eeqan}{\end{eqnarray}}
\newcommand{\notinclude}[1]{}
\newcommand{\comment}[1]{}
\newcommand{\missing}[1]{}
\DeclareMathOperator{\argmin}{{\mathrm{argmin}}}
\newcommand{\shell}{{\mathcal{S}}}
\newcommand{\manifold}{{\mathcal{M}}} 
\newcommand{\x}{{y}} 
\newcommand{\X}{{\mathbf{Y}}}
\newcommand{\V}{{\mathbf{V}}}
\newcommand{\LMg}{\mathscr{G}}
\newcommand{\Riesz}{\mathscr{R}}
\renewcommand{\grad}{{\mathop{\mathrm{grad}}}}
\newcommand{\length}{{\mathcal{L}}}  
\newcommand{\energy}{{\mathcal{E}}}  
\renewcommand{\L}{{\mathrm{L}}} 
\newcommand{\E}{{\mathrm{E}}} 
\newcommand{\Log}[1]{{(\tfrac1{#1}\mathrm{LOG})}}
\newcommand{\Exp}[1]{{(\mathrm{EXP}^{#1})}}
\newcommand{\ExpO}[2]{{\mathrm{EXP}_{#2}^{#1}}}
\newcommand{\parTp}{{\mathrm{P}}} 
\newcommand{\ParTp}{{\mathbf{P}}} 
\newcommand{\Nabla}{{\boldsymbol\nabla}}
\newcommand{\W}{{\mathcal{W}}}
\newcommand{\compDom}{{D}}
\newcommand{\setof}[2]{{\{{#1}\,|\,{#2}\}}}
\newcommand{\tgl}{{\mathrm{tgl}}}
\newcommand{\bnd}{{\mathrm{bnd}}}
\newcommand{\R}{{\mathds{R}}}
\newcommand{\N}{{\mathds{N}}}
\renewcommand{\d}{{\,\mathrm{d}}}
\newcommand{\dist}{{\mathrm{dist}}}
\newcommand{\Id}{{\mathds{1}}}
\newcommand{\Itau}{{{\mathcal{I}}_\tau}}
\newtheorem{remark}[theorem]{Remark}
\DeclareRobustCommand\onedot{\futurelet\@let@token\@onedot}
\def\@onedot{\ifx\@let@token.\else.\null\fi\xspace}
\def\eg{\emph{e.g}\onedot} 
\def\ie{\emph{i.e}\onedot} 
\def\cf{\emph{cf}\onedot}
\def\etal{\emph{et al}\onedot}
\begin{document}

\title{Variational time discretization of geodesic calculus}

\author{Martin Rumpf\thanks{Bonn University, Endenicher Allee 60, D-53115 Bonn, Germany ({\tt Martin.Rumpf@ins.uni-bonn.de}).}
\and Benedikt Wirth\thanks{Courant Institute, \notinclude{New York University, }251 Mercer Street, New York NY 10012, USA ({\tt Benedikt.Wirth@cims.nyu.edu}).}}

\maketitle

\begin{abstract}
We analyze a variational time discretization of geodesic calculus on finite- and certain classes of infinite-dimensional Riemannian manifolds.
We investigate the fundamental properties of discrete geodesics, the associated discrete logarithm,
discrete exponential maps, and discrete parallel transport,
and we prove convergence to their continuous counterparts.
The presented analysis is based on the direct methods in the calculus of variation, on $\Gamma$-convergence,
and on weighted finite element error estimation. 
The convergence results of the discrete geodesic calculus are experimentally confirmed
for a basic model on a two-dimensional Riemannian manifold.
This provides a theoretical basis for the application to shape spaces in computer vision, for which we 
present one specific example.
\end{abstract}

\begin{keywords}
geodesics, exponential map, logarithm, parallel transport, finite elements, error analysis, shape\,space
\end{keywords}

\begin{AMS}
  37L65, 
  49M25, 
  53C22, 
  65L20, 
  65D18  
\end{AMS}

\pagestyle{myheadings}
\thispagestyle{plain}
\markboth{MARTIN RUMPF AND BENEDIKT WIRTH}{VARIATIONAL TIME DISCRETIZATION OF GEODESIC CALCULUS}

\section{Introduction}
Riemannian geometry is a powerful theory which allows to transfer many important concepts
(shortest connecting paths, the arithmetic mean, or the principal component analysis, to name but a few examples)
from linear vector spaces onto nonlinear, curved spaces.
It is based on the notion of a Riemannian metric, and it provides a set of basic tools useful in applications,
among which we shall concentrate on geodesics, geodesic distance, exponential and logarithmic maps, and parallel transport.
During the past decade, Riemannian concepts have for instance increasingly been applied in computer vision
for the design and investigation of nonlinear and often infinite-dimensional shape spaces, where the Riemannian metric encodes the preferred shape variability.
Applications include shape morphing and modeling \cite{KiMiPo07},
computational anatomy in which the morphing path establishes correspondences between a patient and a template \cite{BeMiTrYo02}, as well as shape statistics \cite{FlLuPi04}.

Notwithstanding the conceptual power,
operators such as the logarithm or the exponential map involve the solution of time-dependent nonlinear ordinary or partial differential equations. In more complex spaces---as they appear for instance in vision applications---they are typically difficult to compute unless the spaces possess very peculiar structures \cite{YoMiSh08,SuMeSo11}.
Therefore, we here develop a discrete theory that can be seen as a natural time-discretization of the above-mentioned Riemannian calculus and which is comparatively simple to state and to implement.
It is centered around the definition of a discrete geodesic as the minimizer of a time-discrete path energy
and naturally extends from there to discrete analogs of logarithm, exponential map, and parallel transport.

Complementarily to earlier work \cite{RuWi12,HeRuWa12}, which deals with two different specific shape spaces and 
focuses on the experimental study of the resulting discrete calculus,
we here provide the theoretical justification of the general approach 
in terms of a rigorous convergence analysis for decreasing time step size under suitable 
assumptions on the manifold and the functional involved in the approximation.

The main intended application is a Riemannian calculus in the 
context of shape spaces in computer vision, which motivates our work.
There is a rich diversity of Riemannian shape spaces in the literature.
While some of them are finite-dimensional and represent shapes as polygonal curves or triangulated surfaces \cite{KiMiPo07,LiShDi10},
most deal with infinite-dimensional shapes such as planar curves 
with curvature-based or Sobolev-type metrics \cite{MiMu04,SrJaJo06}.
Dupuis\,\etal employed a higher order quadratic form on Eulerian 
motion velocity $g(v,v)=\int_\compDom L v\! \cdot\! v \,\d x$ on a 
computational domain $\compDom\subset\R^d$ to define a 
suitable metric on the space of diffeomorphisms on $\compDom$ \cite{DuGrMi98}.
An alternative approach uses the theory of optimal transport,
where image intensities are considered as probability densities \cite{ZhYaHa07} and
the Monge--Kantorovich functional $\int_{\!\compDom}\!|\psi(x)\!-\!x|^2\rho_0(x)\d x$ is minimized over all mass preserving mappings $\psi\!:\!\!\compDom\!\!\to\!\!\compDom$.
Via a flow reformulation due to Benamou and Brenier \cite{BeBr00}, this fits nicely into the Riemannian context.
There are only few nontrivial application-oriented Riemannian spaces in which geodesics can be computed in closed form (\eg \cite{YoMiSh08,SuMeSo11}),
else the system of geodesic ODEs has to be solved via time stepping (\eg \cite{KlSrMi04,BeMiTr05}).
Geodesics can also be obtained variationally by minimizing the discretized path length \cite{ScClCr06} 
or energy \cite{FuJuScYa09,WiBaRu10}, an approach generalized and simplified in this work.

The idea of variational time discretization, which underlies our discrete geodesic calculus,
has proved very appropriate also in other fields, in particular in the discretization of gradient flows
for an energy with respect to a particular (Riemannian) metric (\eg mean curvature flow \cite{AlTaWa93}).
For Hamiltonian mechanical systems, variational time discretization is by now a classic field.
The corresponding analog of our time-discrete path energy already occurred in the 70's as discrete action sum
(\cf the brief historic account in \cite{HaLuWa06}) and was analyzed as an independent, time-discrete system,
whose discrete symplectic time steps exhibit the same structure as the discrete exponential proposed here.
The consistency with the associated time-continuous mechanical systems was much later exploited numerically
\cite{HaLuWa06} and analyzed for particular systems from the $\Gamma$-convergence \cite{MuOr04}
and from the ODE-discretization perspective \cite{LeMaOr04} under the name of variational integrators.

Let us note that in contrast to the above works,
we here lay emphasis on the Riemannian tools, in particular on computing a discrete geodesic between two fixed given points,
on deriving a consistent discrete logarithm from this geodesic,
and on using discrete exponential and logarithmic map to compute a consistent discrete parallel transport.
In Section\,\ref{sec:discretecalculus} we briefly set forth the different components of our discrete geodesic calculus.
The existence and convergence properties of the discrete geodesics and corresponding operators are proved in Sections\,\ref{sec:gamma} and \ref{sec:convergence} after which we discuss how the theory applies to embedded manifolds and to a specific example of an infinite-dimensional manifold of shapes in Section\,\ref{sec:application}.

\section{Discrete geodesic calculus on a Riemannian manifold}
\label{sec:discretecalculus}
In this section we present the concept of the discrete geodesic calculus, 
which comprises the notions of discrete geo\-desics,
discrete logarithmic and exponential map, and discrete parallel transport. In preparation for this we briefly recall the corresponding continuous calculus. Precise assumptions on the manifold and involved approximating functionals will be given in the subsequent sections.

Let $(\manifold,g)$ be a smooth, complete Riemannian manifold, where $g$ denotes the metric,
given as a family of positive definite quadratic forms $g_\x:T_{\x}\manifold\times T_{\x}\manifold \to \R$
for $\x\in \manifold$, where $T_{\x}\manifold$ denotes the tangent space in $\x$. Given a smooth path $(\x(t))_{t\in [0,1]}$ on
$\manifold$, the length of this path is given by
\beqn\label{eq:contlength}
\length[(\x(t))_{t\in [0,1]}] = \int_0^1 \sqrt{g_{\x(t)}(\dot\x(t),\dot\x(t))} \d t\,.
\eeqn
Given two points $\x_A$ and $\x_B$ in $\manifold$, the minimizer of $\length$ over all smooth paths  $(\x(t))_{t\in [0,1]}$
with $\x(0) = \x_A$ and $\x(1) = \x_B$ is a geodesic. More generally, geodesics are defined as local minimizers of the path length.
Let us define the distance $\dist(\x_A,\x_B)$ between $\x(0) = \x_A$ and $\x(1) = \x_B$ as the minimal path length.
After reparameterization, a length-minimizing geodesic is also a minimizer of the path energy
\beqn\label{eq:contenergy}
\energy[(\x(t))_{t\in [0,1]}] = \int_0^1 g_{\x(t)}(\dot\x(t),\dot\x(t)) \d t
\eeqn
and satisfies the constant speed property $g_{\x(t)}(\dot\x,\dot\x) = \length^2[(\x(t))_{t\in [0,1]}]$.
The logarithm of a point $\x_B$ with respect to a point $\x_A$ is defined by
$\log_{\x_A} \x_B = \dot \x(0)$
if $(\x(t))_{t\in [0,1]}$ is the unique shortest path with constant speed connecting $\x(0) = \x_A$ and $\x(1) = \x_B$.
The exponential map $\exp_{\x_A}$ maps every tangent vector $v\in T_{\x_A}\manifold$ onto the endpoint
of a geodesic starting at $\x_A$ with initial speed $v$, \ie in the above notation,
$\exp_{\x_A} \dot \x(0) = \x(1)=\x_B\,.$
Furthermore, let us recall the parallel transport on a Riemannian manifold.
Using the Levi-Civita connection $\nabla$ a tangential vector field $v$ is parallel along a curve $(\x(t))_{t\in [0,1]}$ on $\manifold$ if
$\nabla_{\dot \x(t)} v(t)=0\,.$
If we sample a continuous path $(\x(t))_{t\in [0,1]}$ at times $t_k=k \tau$ for $k=0 ,\ldots, K$  and $\tau := \frac1K$, denoting
$\x_k=\x(t_k)$, we obtain the following estimates for length and energy
\begin{equation}\label{eq:contEstimates}
\length[(\x(t))_{t\in[0,1]}] \geq \sum_{k=1}^K\dist(\x_{k-1},\x_k)\,, \qquad
\energy[(\x(t))_{t\in[0,1]}] \geq \frac1\tau\sum_{k=1}^K\dist^2(\x_{k-1},\x_k)\,,
\end{equation}
where equality holds for geodesic paths due to the constant speed property. Indeed, the first estimate is straightforward, and application of Jensen's inequality yields the second estimate,
\beq
\sum_{k=1}^K\dist^2(\x_{k-1},\x_k)
\leq \sum_{k=1}^K \tau \; \int_{(k-1)\tau}^{k\tau} g_{\x(t)}(\dot\x(t),\dot\x(t))\, \d t = \tau \; \energy[(\x(t))_{t\in[0,1]}]\,.
\eeq

Based on these preliminaries we are now in the position to introduce a discrete geodesic calculus.
The starting point is a local approximation of the squared Riemannian distance $\dist^2$ by a functional $\W$.
In detail, we suppose that a smooth functional $\W: \manifold \times \manifold \to \R$ is given such that
\begin{equation}
\label{eq:distapprox}
\dist^2(\x,\tilde \x) = \W[\x,\tilde\x] + O(\dist^3(\x,\tilde \x))
\end{equation}
for all $\x,\, \tilde \x \in \manifold$. We will see later that $g_\x = \frac12 \W_{,22}[\x,\x]$ implies \eqref{eq:distapprox} for smooth $g$ and $\W$.
Together with \eqref{eq:contEstimates}  this motivates the following definition of a
\emph{discrete path energy} and a \emph{discrete path length} for discrete $K$-paths defined as
$(K+1)$-tuples $(\x_0,\ldots, \x_K)$ with $\x_k\in \manifold$ for $k=0,\ldots, K$.

\begin{definition}[Discrete length and energy]\label{def:discreteLE}
For a discrete $K$-path $(\x_0,\ldots,\x_K)$ we define the \emph{discrete length} $\L$ and the \emph{discrete energy} $\E$ by
\beq
\L[(\x_0,\ldots,\x_K)]=\sum_{k=1}^K \sqrt{\W[\x_{k-1},\x_k]}\,,\qquad 
\E[(\x_0,\ldots,\x_K)] =K\sum_{k=1}^K \W[\x_{k-1},\x_k]\,. \label{eq:discretePathEnergy}
\eeq
Then a \emph{discrete geodesic} (of order $K$) is defined as a minimizer of $\E[(\x_0,\ldots,\x_K)]$ for fixed end points
$\x_0,\x_K$.
\end{definition}

To proceed with the definition of discrete logarithm and discrete exponential map we need to give a meaning to displacements
$\tilde \x- \x$. Thus, in what follows let us assume that $\manifold$ is a subset of some Banach space $\X$.
This assumption is not restrictive because our constructions are purely local and can thus always be applied to charts of a more general manifold.
Given a continuous constant speed geodesic $(\x(t))_{t\in [0,1]}$ with $\x(0) = \x_A$ and $\x(1)= \x_B$ and a discrete geodesic $(\x_0,\ldots,\x_K)$ with $\x_0=\x_A$ and $\x_K=\x_B$,
we may view $\x_1-\x_0$ as the discrete counterpart of $\tau\dot\x(0)$ for $\tau=\frac1K$.
Motivated by the fact that $\frac1K \log_{\x_A}(\x_B)= \tau \dot \x(0)$ we hence give the following definition of a discrete logarithmic map.

\begin{definition}[Discrete logarithm]\label{def:discreteLog}
Suppose the discrete geodesic $(\x_0,\ldots,\x_K)$ is the
unique minimizer of the discrete energy \eqref{eq:discretePathEnergy} with $\x_0=\x_A$ and $\x_K=\x_B$, then
we define the \emph{discrete logarithm}
$
\Log{K}_{\x_A}(\x_B) = \x_1-\x_0\,.
$
Note that $\frac1K$ is part of the symbol and not a factor.
\end{definition}

In the special case $K=1$ we obtain $\Log{K}_{\x_A}(\x_B) = \x_B-\x_A$.
As in the continuous case, the discrete logarithm can be considered as a \emph{linear}
representation (in the space $\X$ of displacements on $\x_A$) of the \emph{nonlinear} variation $\x_B$ of $\x_A$.
As we will verify later under suitable assumptions, for a sequence of successively refined discrete geodesics we
obtain
$K \textstyle\Log{K}_\x(\tilde\x) \to \log_\x(\tilde\x)$
for $K\to \infty$.

In the continuous setting, the exponential map of a tangent vector $v \in T_{\x_A}\manifold$ is defined via $\exp_{\x_A} v = \x_B$, where $\x_B=\x(1)$ for the (unique) geodesic $(\x(t))_{t\in[0,1]}$ with $\x(0)=\x_A$ and $\dot \x(0)=v$.
Obviously, $\exp_{\x_A}(\frac{k}{K} v) = \x(\frac{k}{K})$ holds for $k=0,\ldots, K$.
We now aim at approximating $\exp_{\x_A}(k\cdot)$ via a discrete counterpart  $\ExpO{k}{\x_A}$ (the notation reflects the fact that on Lie-groups $\exp^k(\cdot) = \exp(k\cdot)$).
The definition should be consistent with the discrete logarithm in the sense that $\ExpO{k}{\x_0}(\zeta_1)=\x_k$
for a discrete geodesic $(\x_0,\x_1,\ldots,\x_K)$ of order $K\geq k$ with $\zeta_1 = \x_1-\x_0 = \Log{K}_{\x_0}(\x_K)$.
Our definition will reflect the following recursive properties of the continuous exponential map for $v \in T_{\x}\manifold$ sufficiently small,
\begin{align*}
\exp_\x(1v) &= \left(\textstyle\frac11\log_\x\right)^{-1}(v)\,, \quad 
\exp_\x(2v) = \left(\textstyle\frac12 \log_\x\right)^{-1}(v)
\notinclude{\Leftrightarrow\textstyle\frac12 \log_\x (\exp_\x (2v)) = v}\,, \\
\exp_\x(kv) &= \exp_{\exp_\x((k-2)v)}(2 v_{k-1}) 
\quad\text{for }v_{k-1} = \log_{\exp_\x ((k-2) v)} \exp_\x((k-1) v)\,.
\end{align*}
Replacing the tangent vector $v$ by a displacement $\zeta$, $\exp(k\cdot)$ by $\Exp{k}$, and
$\frac1k \log$ by $\Log{k}$ we obtain the following recursive definition of the discrete exponential map.

\begin{definition}[Discrete exponential map]
For $\zeta \in\X$ sufficiently small with $\x+\zeta\in\manifold$ define
\begin{align*}
\ExpO{1}{\x}(\zeta) &= \textstyle\Log{1}_\x^{-1}(\zeta)\,, \quad
\ExpO{2}{\x}(\zeta) = \textstyle\Log{2}_\x^{-1}(\zeta)\,,\\
\ExpO{k}{\x}(\zeta) &= \ExpO{2}{\ExpO{k-2}{\x}(\zeta)}(\zeta_{k-1}) 
\quad\text{with }\zeta_{k-1} = \textstyle\Log{1}_{\ExpO{k-2}{\x}(\zeta)} \ExpO{k-1}{\x}(\zeta)\,.
\end{align*}
\end{definition}

The smallness assumption on $\zeta$ ensures that $\Log{1}_\x$ and $\Log{2}_\x$ are invertible.
It is straightforward to verify $\ExpO{K}{\x}=\Log{K}_\x^{-1}$ on the image of $\Log{K}_\x$.
The central ingredient of this definition, the operator $\ExpO{2}{\x}$ requires the solution of a problem with a variational constraint. Indeed, to compute  $\ExpO{2}{\x}(\zeta)$, one considers all discrete geodesic paths $(\x, \x_1, \x_2)$ of order $2$,
where for any chosen $\x_2$ the point $\x_1$ must by definition be the unique minimizer of \eqref{eq:discretePathEnergy}
so that we may write $\x_1[\x_2]$. Now, $\ExpO{2}{\x} (\zeta)$ is that point $\x_2$ for which we have
$\x+\zeta = \x_1[\x_2]$, which can be equivalently described via the equation
\begin{equation}\label{eqn:variationalProblemExp2}
\x + \zeta = \argmin\limits_{\x_1\in \manifold} \left(\W[\x, \x_1] + \W[\x_1, \x_2]\right)\,.
\end{equation}
The corresponding Euler--Lagrange equation, as a nonlinear equation for
$\ExpO{2}{\x} (\zeta)=\x_2$, is given by
$
\W_{,2}[\x, \x+\zeta] + \W_{,1}[\x+\zeta, \x_2] = 0\,.
$
\begin{remark}[Discrete geodesics versus discrete exponential shooting]
The variational definition \eqref{eqn:variationalProblemExp2} of the operator $\ExpO{2}{}$ 
ensures that as long as a discrete geodesic connecting two end points $\x_A$, $\x_B$ is unique,
the discrete exponential map $\ExpO{k}{\x_A}$ applied to $\Log{K}_{\x_A}(\x_B)$ 
for $k=0,\ldots, K$ retrieves exactly the same discrete geodesic as the minimization
of the discrete energy $\E[\x_A, \hat \x_1, \ldots, \hat  \x_{K-1}, \x_B]$ over all $\hat  \x_1, \ldots, \hat  \x_{K-1}$, \ie 
$\x_k=\ExpO{k}{\x} \Log{K}_{\x_A}(\x_B)\,,$
where $(\x_A=x_0, \x_1, \ldots, \x_{K-1}, \x_B= \x_K)$ is the discrete geodesic.
\end{remark}

Finally, we consider the discretization of parallel transport along a discrete (not necessarily geodesic) curve $(\x_0,\ldots, \x_K)$. In the continuous setting, given a curve $(\x(t))_{t \in [0,1]}$ and a vector $v_0$ at $\x(0)$,
we define the parallel transport $\parTp_{(\x(t))_{t \in [0,1]}} v_0$ of $v_0$ along the curve as the vector $v(1)$ resulting from the solution of the $\nabla_{\dot\x(t)} v(t) =0$ for $t\in[0,1]$ and initial data $v(0) = v_0$, where $\nabla_{\dot\x}$ denotes the covariant derivative defined via the Levi-Civita connection $\nabla$.
There is a well-known first-order approximation of parallel transport called Schild's ladder \cite{EhPiSc72,KhMiNe00},
which is based on the construction of a sequence of geodesic parallelograms.
Given a curve $(\x(t))_{t \in [0,1]}$ and a tangent vector $v_{k-1}\in T_{\x((k-1)\tau)}\manifold$,
the approximation $v_k\in T_{\x(k \tau)}\manifold$ of the parallel transported vector via a geodesic parallelogram can be expressed as
\begin{align*}
\x^p_{k-1} &= \exp_{\x((k-1)\tau)} (v_{k-1}) \,, \quad
&\x_k^c &= \exp_{\x^p_{k-1}} \big(\tfrac12 \log_{\x^p_{k-1}}  (\x(k \tau)) \big)\,,\\
\x_{k}^p &= \exp_{\x((k-1)\tau)} \big(2 \log_{\x((k-1)\tau)}  (\x_k^c)\big) \,,\quad
&v_k &= \log_{\x(k \tau)} (\x_{k}^p)\,.
\end{align*}
Here, $\x_k^c$ is the midpoint of the two diagonals of the geodesic parallogramm with vertices $\x((k-1)\tau)$,
$\x^p_{k-1}$, $\x^p_k$, and $\x(k\tau)$.
This scheme can be easily transferred to discrete curves based on the discrete logarithm and the discrete exponential introduced before.

\begin{definition}[Discrete parallel transport]
Let $(\x_0,\ldots, \x_K)$ be a discrete curve in $\manifold$ with $\x_k-\x_{k-1}$ sufficiently small for $k=1,\ldots, K$,
then the \emph{discrete parallel transport} of a displacement $\zeta_0$ at $\x_0$ along $(\x_0,\ldots, \x_K)$
is defined for $k=1,\ldots, K$ via the iteration
\begin{align*}
\x^p_{k-1} &= \x_{k-1} + \zeta_{k-1}\,, \quad
&\x_k^c &= \x^p_{k-1} + \big( \Log{2}_{\x^p_{k-1}} (\x_{k})\big)\,, \\
\x_k^p &= \ExpO{2}{\x_{k-1}} \left(\x_k^c-\x_{k-1}  \right)\,, \quad
&\zeta_{k} &= \x_k^p -\x_k\,,
\end{align*}
where $\zeta_{k}$ is the transported displacement at $\x_k$. We denote
$\ParTp_{\x_K,\ldots,\x_0} \zeta_0 = \zeta_K$.
\end{definition}

Above we have used $\Log{1}_\x (\tilde \x) = \tilde \x - \x$  as well as $\ExpO{1}{\x}(\zeta) = \x+ \zeta$.
In the $k$\textsuperscript{th} step of the discrete parallel transport the Euler--Lagrange equations to determine $\x_k^c$ and $\zeta_k$ for given $\zeta_{k-1}$ and discrete curve $(\x_0,\ldots, \x_K)$ are
\beqan\label{eq:ELParallel1}
\W_{,2}[\x_{k-1}+\zeta_{k-1},\x_k^c] + \W_{,1}[\x_k^c, \x_{k}] &=& 0\,, \\
\label{eq:ELParallel2}
\W_{,2}[\x_{k-1},\x_k^c] + \W_{,1}[\x_k^c, \x_{k}+\zeta_{k}] &=& 0\,.
\eeqan
If $\W$ is symmetric, these conditions are the same as the Euler--Lagrange equations for backwards parallel transport
so that $\ParTp_{\x_K,\ldots,\x_0}^{-1}=\ParTp_{\x_0,\ldots,\x_K}$,
which however is generally not true if $\W$ is not symmetric.
Note that for a continuous geodesic $\x(t)$ the velocity $\dot\x(t)$ at a time $t$
equals the initial velocity $\dot\x(0)$, parallel transported along the geodesic.
In analogy, for a discrete geodesic $(\x_0,\ldots,\x_K)$ we have $\x_{k+1}-\x_k=\ParTp_{\x_k,\ldots,\x_0}(\x_1-\x_0)$ for $k=0,\ldots,K-1$.

Given discrete parallel transport, the Levi-Civita connection $\nabla_\xi\eta$
for $\xi \in T_\x \manifold$ and a vector field $\eta$ in the tangent bundle $T\manifold$
can be also be approximated.

\begin{definition}
For $\x \in \manifold$, small enough $\xi \in \X$, and vectors $\eta_0$ attached to $\x$ and $\eta_1$ attached to  $\x+\xi$
(representing a discrete vector field),
$
\Nabla_\xi (\eta_0,\eta_1)=\ParTp_{\x+\xi,\x}^{-1}\eta_1-\eta_0
$
defines a discrete connection. 
\end{definition}

The inverse parallel transport satisfies the same Euler--Lagrange equations \eqref{eq:ELParallel1}-\eqref{eq:ELParallel2} as above
(only this time $\x_k^c$ and $\zeta_{k-1}$ are determined from $\zeta_k$).
In analogy to $\nabla_{\dot\x(t)}\dot\x(t)=0$ for a continuous geodesic 
a discrete geodesic $(\x_0,\ldots,\x_K)$ satisfies 
$\Nabla_{\Delta\x_k}(\Delta\x_{k},\Delta\x_{k+1}) = 0 $ for 
$\Delta\x_k=\x_{k+1}-\x_k$, $k=0,\ldots,K-2$.
If $\W$ is symmetric, we can also express the discrete connection as 
$\Nabla_\xi(\eta_0,\eta_1)=\ParTp_{\x,\x+\xi}\eta_1-\eta_0$.

\section{Discrete geodesic calculus on embedded finite-dimensional manifolds}\label{sec:embeddedManifold}
As a simple example and to further motivate our approach
let us briefly demonstrate the discrete geodesic calculus
for the simple case of an $(m-1)$-dimensional manifold $\manifold$ embedded in $\R^m$.
We consider the simple energy
$\W[\x, \tilde \x] = |\tilde \x - \x|^2$
which reflects the stored elastic energy in a spring spanned
between points $\x$ and $\tilde \x$ through the ambient space of $\manifold$ in $\R^m$.
Thus, the discrete path energy is given by
\beqa\textstyle
\E[(\x_0,\ldots, \x_K)] = K \sum_{k=1}^K |\x_{k}-\x_{k-1}|^2\,.
\eeqa
Now, we define the Lagrangian
$F(X,\Lambda) = \E[(\x_0,\ldots, \x_K)] - \Lambda \cdot D(X)\,,$
where $X=(\x_1,\ldots, \x_{K-1})$ are the actual point positions, $\Lambda=(\lambda_1,\ldots, \lambda_{K-1})$ the Lagrange multipliers, and $D(X) = (d_\manifold(\x_k))_{k=1,\ldots, k-1}$ with $d_\manifold(\cdot)$ the (local) signed distance function of $\manifold$. Hence
$\grad_X F(X,\Lambda) =0$, $\grad_\Lambda F(X,\Lambda) =0$  are the necessary conditions
for $(\x_0,\ldots, \x_K)$ to be a discrete geodesic with end points $\x_0=\x_A$ and $\x_K=\x_B$.
The resulting system of $(m+1)(K-1)$ degrees of freedom can be solved for instance by Newton's method.
In particular, $(\grad_X F(X,\Lambda))_k = 2K\, (2 \x_k- \x_{k-1}-\x_{k+1}) - \lambda_k \nabla d_\manifold(\x_k)= 0$ results in finding points $\x_k$ ($k=1,\ldots, K-1$) on $\manifold$ such that
\beqn
2 \x_k- \x_{k-1}-\x_{k+1} \perp T_{\x_k} \manifold\,,
\label{eq:logspring}
\eeqn
while $\grad_\Lambda F(X,\Lambda) = D(X) = 0$ ensures that the points $\x_k$ are located on $\manifold$.
Equation \eqref{eq:logspring} is a discrete counterpart
of the continuous geodesic equation $\ddot{\x}(t) \perp T_{\x(t)} \manifold$ for an arclength parametrized geodesic curve $(\x(t))_{t\in \R}$ with $\frac{2 \x_k- \x_{k-1}-\x_{k+1}}{\tau^2} \approx - \ddot{\x}(k\tau)$.

From these considerations, we derive for the discrete logarithm that
$\zeta = \Log{2}_\x \tilde \x$ is described by the condition
\beqa\textstyle
 \zeta - \frac{\tilde\x - \x}{2} \perp T_{\x+\zeta} \manifold\,.
\eeqa
To achieve a simple geometric condition for the discrete exponential map $\ExpO{2}{\x} \zeta$ we consider a
discrete geodesic $(\x, \x_1, \x_2)$ with $\x_1 = \x  + \zeta$ and $\x_2 = \x_1 + \eta $, where the displacement $\eta$ is the actual degree of freedom.
The necessary condition for $(\x, \x_1, \x_2)$ to be a discrete geodesic and thus for $\x_2= \ExpO{2}{\x} \zeta$ is
$$
\zeta- \eta \perp T_{\x+\zeta} \manifold\,,
$$
which results in a one-dimensional search problem in the space $\manifold \cap \left(\x + \mbox{span} \{\zeta, n_{\x+\zeta} \} \right)$,
where $n_{\x+\zeta} $ denotes a normal on $T_{\x+\zeta} \manifold$.

\section{Properties of the discrete path energy}
\label{sec:gamma}
In the remainder of the paper we aim to examine the convergence properties of the discrete geodesic calculus
as the discrete time step size $\tau=\tfrac1K$ tends to $0$.
Figure\,\ref{fig:convergence} shows experimental evidence for the convergence
of the discrete geodesic, exponential map, logarithmic map, and parallel transport.
\begin{figure}
\setlength{\unitlength}{.25\linewidth}
\begin{picture}(4,1.9)
\put(-.02,-0.1){\includegraphics[trim=60  10  440 148,clip,width=1\unitlength]{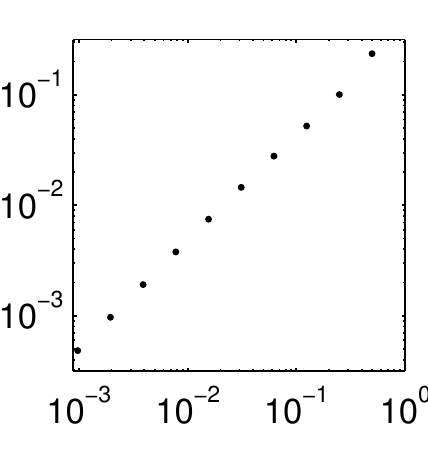}}
\put(1.00,-0.1){\includegraphics[trim=190 10  310 148,clip,width=1\unitlength]{convergenceTest}}
\put(2.00,-0.1){\includegraphics[trim=317 10  178 148,clip,width=1.03\unitlength]{convergenceTest}}
\put(3.00,-0.1){\includegraphics[trim=444 10  51  148,clip,width=1.03\unitlength]{convergenceTest}}
\put(-.02,1.00){\includegraphics[trim=60  156 440 21 ,clip,width=1\unitlength]{convergenceTest}}
\put(1.00,1.00){\includegraphics[trim=190 156 310 21 ,clip,width=1\unitlength]{convergenceTest}}
\put(2.00,1.00){\includegraphics[trim=317 156 183 21 ,clip,width=1\unitlength]{convergenceTest}}
\put(3.03,1.00){\includegraphics[trim=448 156 52  21 ,clip,width=1\unitlength]{convergenceTest}}
\put(0.68,.13){\scalebox{.6}{\footnotesize $\|\x-\x_\tau\|_2$}}
\put(1.21,.13){\scalebox{.6}{\footnotesize $|K\!(\frac1{\!K\!}\mathrm{LOG})_{\!\x_A\!}(\x_B)\!-\!\log_{\!\x_A\!}(\x_B)|$}}
\put(2.28,.13){\scalebox{.6}{\footnotesize $|\Exp{K\!}_{\x_A\!}(\!\tfrac{v}K\!)-\exp_{\x_A\!}(v)|$}}
\put(3.29,.13){\scalebox{.6}{\footnotesize $|K\ParTp_{\x_\tau}\frac{w}K-\parTp_{(\x(t))_{t\in [0,1]}} w|$}}
\end{picture}
\caption{Top: Discrete geodesic path $\x_\tau = (\x_0,\ldots, \x_K)$ with $\x_0=\x_A=(\frac12,0)$ and $\x_1=\x_B=(-\frac12,2)$ on the stereographic projection of the sphere,
$K\Log{K}_{\x_A}(\x_B)$, $\Exp{K}_{\x_A}(v/K)$ for $v=\log_{\x_A}(\x_B)$,
and parallel transport of $w=(-\frac25,0)$ from $\x_A$ to $\x_B$
at different temporal resolutions ($K=2^k$ for $k=1,\ldots,10$).
Bottom: The corresponding errors versus $1/K$.}
\label{fig:convergence}
\end{figure}
In this section we study some properties of the discrete and the continuous path energy on a specific class of  manifolds, and we examine their relation via the concept of $\Gamma$-convergence. In particular we will show that sequences of successively refined discrete geodesic paths converge to a continuous geodesic path.

We consider the following functional analytic set up.
Let $\X$ be a Banach space with norm $\|\cdot\|_\X$
on which a symmetric bilinear form $(\x,v,w) \mapsto g_{\x}(v,w)$ for
$\x,\,v,\,w \in\X$ induces a Riemannian structure with
$g: \X \times \X \times \X \to \R \cup \{\infty\}$.
Furthermore, let $\V$ be a compactly embedded, separable, reflexive
subspace of $\X$ with norm $\|\cdot\|_\V$.
We shall suppose that $g$ is uniformly bounded and $\V$-coercive in the sense
\beqn\label{eq:boundg}
c^\ast \|v\|^2_\V \leq g_\x(v,v) \leq C^\ast \|v\|^2_\V
\eeqn
for constants $c^\ast>0$ and $C^\ast<\infty$ independent of $\x$.
In particular, $g_\x(v,v)=\infty$ if $v\not\in\V$.
Also, $g$ shall be continuous in $\x$ with
\beqn\label{eq:beta}
| g_\x(v,v) - g_{\tilde \x}(v,v)| \leq \beta(\| \x - \tilde \x\|_\X) \|v\|^2_\V
\eeqn
for a strictly increasing, continuous function $\beta$ with $\beta(0)=0$.
Finally, given $\x_A\in\X$ we consider the manifold $\manifold := \x_A + \V$,
on which $g_{\cdot}(\cdot,\cdot)$ induces a Riemannian structure.

\begin{remark}
To motivate this setup, we refer to the infinite-dimensional shape space of rods discussed in detail later in Section \ref{sec:discreteshells}
(the application to finite-dimensional manifolds is explained in the next remark).
Rods $\x$ are considered as differentiable curves in the space $\X=C^1$, and the 
associated metric $g_\x(v,v)$ depends on first and second derivatives of the motion field $v$ and turns out to be finite on $\V = W^{2,2}$.
The rod manifold $\manifold$ is thus locally spanned by all those curves which can be reached from a particular
$\x_A \in \X$ via (short time) integration of $\dot \x(t) = v(t)$ with $v(t) \in W^{2,2}$,
\ie the space $\V$ encodes the regularity of admissible variations.
\end{remark}

\begin{remark}
The assumptions particularly cover the local theory of smooth $k$-dimen\-sional manifolds ($k<\infty$), which by Nash's theorem can
be isometrically embedded in the Euclidian space $\R^m$ for $m$ sufficiently large.
In this case one chooses $\X=\V=\R^k$ to represent a chart of the manifold and
$g_{\x}(v,v) = DX(\x) v \cdot DX(\x) v$, where $X$ is the associated parametrization and
$\cdot$ is the scalar product in $\R^m$.
\end{remark}

For $\x_A,\,\x_B\in\manifold$, the next theorem states the existence of a connecting path with least energy.
The key point is the weak lower semi-continuity of the continuous path energy \eqref{eq:contenergy} using the compact embedding of $\V$ into $\X$.

\begin{theorem}[Existence of continuous geodesics]\label{thm:existenceContinuous}
Under the above assumptions there exists a continuous geodesic path $(\x(t))_{t\in[0,1]}$ defined as a minimizer of the path energy $\energy$ among all paths $(\tilde \x(t))_{t\in[0,1]}$ with
$\tilde \x(0)= \x_A$ and $\tilde \x(1)=\x_B$.
\end{theorem}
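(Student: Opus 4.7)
The plan is to apply the direct method of the calculus of variations on the affine class of $W^{1,2}((0,1);\V)$-paths with prescribed endpoints $\x_A$ and $\x_B$. The admissible class is non-empty: the straight interpolation $\x(t) = \x_A + t(\x_B-\x_A)$ has constant velocity $\x_B-\x_A\in\V$, lies in $\manifold$ pointwise, and by the upper bound in \eqref{eq:boundg} satisfies $\energy[\x]\leq C^\ast\|\x_B-\x_A\|_\V^2<\infty$. The lower bound in \eqref{eq:boundg} provides coercivity: any path with $\energy[\x]\leq E$ satisfies $\|\dot\x\|_{L^2((0,1);\V)}^2\leq E/c^\ast$, and together with $\x(t)-\x_A=\int_0^t\dot\x(s)\,\d s$ this yields a uniform $W^{1,2}((0,1);\V)$-bound for any minimizing sequence.

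Next I would extract a candidate minimizer. Since $\V$ is separable and reflexive, so is $L^2((0,1);\V)$, and a minimizing sequence $(\x_n)$ has, up to subsequence, $\dot\x_n\rightharpoonup\dot\x$ weakly in $L^2((0,1);\V)$ for some limit path $\x$ with $\x(t)=\x_A+\int_0^t\dot\x(s)\,\d s$. The compact embedding $\V\hookrightarrow\hookrightarrow\X$ combined with the uniform bound $\|\dot\x_n\|_{L^2((0,1);\V)}\leq C$ allows me to apply an Aubin--Lions type argument (or a direct Arzelà--Ascoli argument, using the uniform Hölder continuity $\|\x_n(t)-\x_n(s)\|_\X\leq C|t-s|^{1/2}$) to obtain strong convergence $\x_n\to\x$ in $C([0,1];\X)$. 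This transfers the endpoint constraints $\x(0)=\x_A$, $\x(1)=\x_B$ to the limit.

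The decisive step is weak lower semi-continuity of $\energy$. I would split
\beq
\energy[\x_n] \;=\; \int_0^1 g_{\x(t)}(\dot\x_n,\dot\x_n)\,\d t \;+\; \int_0^1\bigl(g_{\x_n(t)}(\dot\x_n,\dot\x_n)-g_{\x(t)}(\dot\x_n,\dot\x_n)\bigr)\d t.
\eeq
By \eqref{eq:beta}, the second term is bounded in absolute value by $\beta\bigl(\|\x_n-\x\|_{C([0,1];\X)}\bigr)\,\|\dot\x_n\|_{L^2((0,1);\V)}^2$, which vanishes as $n\to\infty$ since $\beta$ is continuous with $\beta(0)=0$, the uniform $\X$-convergence holds, and the $L^2((0,1);\V)$-norms stay bounded. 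For the first term, the map $v\mapsto\int_0^1 g_{\x(t)}(v,v)\,\d t$ is a non-negative quadratic form on $L^2((0,1);\V)$, hence convex and strongly continuous, and therefore weakly lower semi-continuous; taking $\liminf$ gives $\int_0^1 g_{\x(t)}(\dot\x,\dot\x)\,\d t\leq\liminf_n\int_0^1 g_{\x(t)}(\dot\x_n,\dot\x_n)\,\d t$. Combining the two estimates yields $\energy[\x]\leq\liminf_n\energy[\x_n]$, so $\x$ attains the infimum.

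The only delicate point is that the integrand $g_{\x(t)}(\dot\x,\dot\x)$ depends on $\x(t)$ itself, so standard convex weak lower semi-continuity does not apply off the shelf. The remedy — and the reason for assuming both the compact embedding $\V\hookrightarrow\hookrightarrow\X$ and the $\X$-modulus-of-continuity bound \eqref{eq:beta} — is exactly the base-point freezing argument above, which decouples the nonlinear dependence on $\x(t)$ from the quadratic dependence on $\dot\x(t)$ before invoking convexity.
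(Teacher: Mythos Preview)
Your proof is correct and follows essentially the same strategy as the paper's: direct method, coercivity from \eqref{eq:boundg}, weak compactness in $L^2((0,1);\V)$, strong convergence in $C^0([0,1];\X)$ via Arzel\`a--Ascoli, and weak lower semicontinuity exploiting convexity in the velocity together with the $\X$-continuity \eqref{eq:beta} in the base point. The only difference is in the packaging of the lower-semicontinuity step: the paper invokes Mazur's lemma and Fatou's lemma explicitly on convex combinations of the minimizing sequence, whereas you first freeze the base point at the limit $\x(t)$ (reducing to a bounded nonnegative quadratic form on $L^2((0,1);\V)$, which is convex and strongly continuous, hence weakly lower semicontinuous) and then control the freezing error via \eqref{eq:beta}---a slightly more streamlined route to the same conclusion.
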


\begin{proof}
Let $(\x^j(t))_{t\in[0,1]}$ be a minimizing sequence of paths.
We can obviously assume that the energy on this sequence is bounded by
$\bar\energy = \int_0^1g_{\x_A+t(\x_B-\x_A)}(\x_B-\x_A,\x_B-\x_A)\,\d t$. Hence,
$\|v^j\|_{L^2((0,1);\V)}$ is bounded for $v^j=\dot\x^j$.
Because $L^2((0,1);\V)$ is separable and reflexive, there is a subsequence---with a slight misuse of notation indexed as before---which converges weakly to some
$v \in L^2((0,1);\V)$.
Likewise, the curves $\x^j(t) = \int_0^t v^j(s) \d s$
converge weakly in $W^{1,2}((0,1);\x_A+\V)$ to $\x(t) = \int_0^t v(s) \d s$.
Due to the bounded embedding of $W^{1,2}((0,1);\V)$ into $C^{0,\frac12}([0,1];\V)$
and the compact embedding of $C^{0,\frac12}([0,1];\V)$ into $C^0([0,1];\X)$ by Arzel\`a--Ascoli,
upon selecting a further subsequence, $\x^j$ converges strongly in $C^0([0,1];\X)$.
Now choose $j$ large enough that $\energy[\x^j] \leq \underline \energy + \delta$ and
$\sup_{t\in [0,1]}\beta(\|\x^j(t)-\x(t)\|_\X)\leq \delta$ for a prescribed small $\delta>0$ and
$\underline \energy = \inf \energy$. Following the usual paradigm of the direct methods in the calculus of variations we can estimate the energy using Mazur's lemma, the convexity of $v\mapsto g_\x(v,v)$, the continuity and coercivity of $g$, and Fatou's lemma,
{\allowdisplaybreaks
\beqa
\energy[(\x(t))_{t\in[0,1]}] &=& \int_0^1 g_{\x(t)}(v(t),v(t)) \d t \\
&\leq& \int_0^1 \lim_{j\to\infty} \sum_{i=j}^{N_j}  \lambda_i^j g_{\x(t)}(v^i(t),v^i(t)) \d t \\
&\leq& \int_0^1 \lim_{j\to\infty} \sum_{i=j}^{N_j}  \lambda_i^j g_{\x^i(t)}(v^i(t),v^i(t))\!+\!
\lambda_i^j \beta(\|\x^i(t)\!-\!\x(t)\|_\X) \|v^i(t)\|_\V^2\d t \\
&\leq& \lim_{j\to\infty} \sum_{i=j}^{N_j}\lambda_i^j
\left( \energy[(\x^i(t))_{t\in[0,1]}] + \delta \int_0^1 \|v^i(t)\|^2_\V\,\d t \right) 
\leq \underline \energy \left(1+ \frac{\delta}{c^\ast} \right)\,.
\eeqa
}
Here, $(N_j)_{j=1,\ldots,\infty}$ is a sequence in $\N$ and 
$(\lambda_i^j)_{i=j,\ldots,N_j}^{j=1,\ldots, \infty}$ is a sequence of convex combination coefficients 
with $\sum_{i=j}^{N_j} \lambda_i^j = 1$, $\lambda_i^j\geq 0$, and
$\sum_{i=j}^{N_j} \lambda_i^j v^i\to v$ strongly in  $L^2((0,1);\V)$ for $j\to\infty$.
Because $\delta$ is arbitrary, we obtain $\energy[(\x(t))_{t\in[0,1]}] \leq \underline \energy$, which proves the claim.
\end{proof}

For $\x_A,\,\x_B\in\X$, let us introduce the Riemannian distance
\beqn\label{eqn:RiemannianDistance}
\dist(\x_A,\x_B)=\sqrt{\min_{\x(0)=\x_A,\x(1)=\x_B}\energy[(\x(t))_{t\in[0,1]}]}\,.
\eeqn
It is an easy exercise to verify the axioms of a metric 
and that the induced topology is equivalent to the $\V$-topology,
$\sqrt{c^\ast}\|\x_B-\x_A\|_\V\leq\dist(\x_A,\x_B)\leq\sqrt{C^\ast}\|\x_B-\x_A\|_\V$.
Furthermore, a simple reparameterization argument shows $g_{\x(t)}(\dot\x(t),\dot\x(t))=\mathrm{const.}$ along a geodesic.

For the discrete path energy, we would like to show an analogous existence result,
as well as properties related to \eqref{eqn:RiemannianDistance} and the above-mentioned constant speed parameterization.
For this purpose we consider a lower semi-continuous $\W:\X\times \X\to\R\cup\{\infty\}$ satisfying
\beqn
\W[\x,\tilde\x]=\dist^2(\x,\tilde\x)+O(\dist^3(\x,\tilde\x))
\eeqn
with uniform constants, and we assume coercivity of $\W$ in the sense
\beqn\label{eqn:coercivity}
\W[\x,\tilde\x] \geq \gamma(\dist(\x,\tilde\x))
\eeqn
for a strictly increasing, continuous function $\gamma$ with $\gamma(0)=0$.

\begin{theorem}[Existence of discrete geodesics]\label{thm:existence}
Given $\x_A, \, \x_B \in\manifold$, there is a discrete geodesic path
$(\x_0,\ldots, \x_K)$ which minimizes the discrete energy $\E$ over all
discrete paths $(\tilde \x_0,\ldots, \tilde \x_K)$ with
$\tilde \x_0 = \x_A$ and $\tilde \x_K = \x_B$.
\end{theorem}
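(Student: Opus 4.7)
The plan is to apply the direct method of the calculus of variations, closely paralleling the proof of Theorem~\ref{thm:existenceContinuous} but with the simplification that only finitely many points $\x_0,\ldots,\x_K$ must be controlled. First I would take a minimizing sequence $(\x_0^j,\ldots,\x_K^j)_{j\in\N}$ of admissible discrete paths with $\x_0^j=\x_A$ and $\x_K^j=\x_B$ for all $j$. To justify that the infimum of $\E$ is finite, I would sample a continuous geodesic from Theorem~\ref{thm:existenceContinuous} at the times $k/K$, observe that each segment has bounded $\dist$-distance, and use the local expansion \eqref{eq:distapprox} to see that each $\W$ contribution is finite, giving an admissible path of finite energy and thus a uniform bound $\E[(\x_0^j,\ldots,\x_K^j)]\leq C$.

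Second, I would extract uniform $\V$-bounds. The coercivity assumption \eqref{eqn:coercivity} gives $\gamma(\dist(\x_{k-1}^j,\x_k^j))\leq C/K$ for every $k$, so that consecutive distances are uniformly bounded. Combined with the triangle inequality for $\dist$ and the equivalence $\sqrt{c^\ast}\|\cdot\|_\V\leq\dist\leq\sqrt{C^\ast}\|\cdot\|_\V$ established after \eqref{eqn:RiemannianDistance}, this yields that $\|\x_k^j-\x_A\|_\V$ is bounded uniformly in $j$ for every $k=0,\ldots,K$.

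Third, I would extract convergent subsequences. Reflexivity of $\V$ and a diagonal argument over the finitely many indices $k$ give weak limits $\x_k^j\rightharpoonup \x_k$ in $\V$, and the compact embedding $\V\hookrightarrow\X$ upgrades this to strong convergence $\x_k^j\to\x_k$ in $\X$. The boundary conditions $\x_0=\x_A$, $\x_K=\x_B$ pass to the limit trivially since the corresponding sequences are constant. Finally, lower semi-continuity of $\W:\X\times\X\to\R\cup\{\infty\}$ yields
\beq
\E[(\x_0,\ldots,\x_K)] \leq K\sum_{k=1}^K \liminf_{j\to\infty}\W[\x_{k-1}^j,\x_k^j] \leq \liminf_{j\to\infty}\E[(\x_0^j,\ldots,\x_K^j)]\,,
\eeq
which identifies $(\x_0,\ldots,\x_K)$ as a minimizer.

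The main obstacle is conceptual rather than technical: one must ensure that the infimum is finite for the given $K$, which rests on the interplay between the discretization scale $1/K$ and the local (rather than global) validity of the expansion $\W=\dist^2+O(\dist^3)$. For sufficiently fine $K$ this is automatic from sampling a continuous geodesic, and in any coarser regime the same idea works provided $\dist(\x_A,\x_B)$ is small enough relative to the radius on which the expansion is uniform; otherwise one would need to assume the existence of at least one admissible path of finite discrete energy as a mild additional hypothesis. All other ingredients—boundedness of the minimizing sequence in $\V$, compactness via $\V\hookrightarrow\X$, and lower semi-continuity of $\W$ in $\X$—are direct consequences of the assumptions already in place.
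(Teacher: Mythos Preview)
Your proof is correct and follows the same direct-method approach as the paper: minimizing sequence, finite energy via a competitor path, $\V$-boundedness from coercivity \eqref{eqn:coercivity} and the equivalence of $\dist$ with $\|\cdot\|_\V$, compactness via $\V\hookrightarrow\X$, and lower semi-continuity of $\W$. The paper sidesteps your ``main obstacle'' entirely by choosing the trivial competitor $(\x_A,\x_B,\x_B,\ldots,\x_B)$ with energy $K\W[\x_A,\x_B]+K(K-1)\W[\x_B,\x_B]$, which requires no locality of the expansion \eqref{eq:distapprox}---only that $\W[\x_A,\x_B]$ be finite.
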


\begin{proof}
Let $((\x^j_0,\ldots, \x^j_K))_{j=1,\ldots,\infty}$ be a minimizing sequence.
We can obviously assume the $\x_k^j$ to lie on $\manifold=\x_A+\V$ and that the energy on this sequence is bounded by
$\bar \E = K\W[\x_A, \x_B]+K(K-1)\W[\x_B,\x_B]$. Because $\V$ compactly embeds in $\X$, there is a subsequence, still denoted $(\x^j_0,\ldots, \x^j_K)$, which converges in $\X$ to a discrete path
$(\x_0,\ldots, \x_K)$. By the lower semi-continuity of $\W$ in both arguments, we finally
obtain
$\E[(\x_0,\ldots, \x_K)] \leq \inf_{(\tilde \x_0,\ldots, \tilde \x_K)}
\E[(\tilde \x_0,\ldots, \tilde \x_K)]$.
\end{proof}

\begin{theorem}[Bounds on discrete path energy]\label{thm:energyConvergenceRate}
The discrete path energy satisfies
$$\min_{\substack{(\x_0,\ldots,\x_K)\\\x_0=\x_A,\x_K=\x_B}}\E[(\x_0,\ldots,\x_K)]=\dist^2(\x_A,\x_B)(1+O(\dist(\x_A,\x_B)/K))$$
with uniform constants.
\end{theorem}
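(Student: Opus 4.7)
The plan is to prove matching upper and lower bounds for $\min\E$ in terms of $D:=\dist(\x_A,\x_B)$. The upper bound is obtained by testing $\E$ on the discrete path $\x_k:=\x(k/K)$ sampled from the continuous length-minimizing geodesic furnished by Theorem~\ref{thm:existenceContinuous}. By the constant-speed property, $\dist(\x_{k-1},\x_k)=D/K$, and inserting this into the uniform expansion \eqref{eq:distapprox} yields
\[
\E[(\x_0,\ldots,\x_K)]=K\sum_{k=1}^K \W[\x_{k-1},\x_k]=K\cdot K\bigl(D^2/K^2+O(D^3/K^3)\bigr)=D^2+O(D^3/K),
\]
so $\min\E\le D^2(1+O(D/K))$.

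For the lower bound, take any competitor $(\x_0,\ldots,\x_K)$ and write $d_k:=\dist(\x_{k-1},\x_k)$. The expansion \eqref{eq:distapprox} supplies a uniform constant $C$ with $\W[\x_{k-1},\x_k]\ge d_k^2-Cd_k^3$, so $\E\ge K\sum_k d_k^2-CK\sum_k d_k^3$. The discrete triangle inequality $\sum_k d_k\ge D$ combined with Cauchy--Schwarz yields $K\sum_k d_k^2\ge(\sum_k d_k)^2\ge D^2$, which produces the leading term. What remains is to show $K\sum_k d_k^3=O(D^3/K)$ at the minimizer.

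The target cubic bound would follow from $\max_k d_k=O(D/K)$ together with the a-priori estimate $K\sum_k d_k^2=O(D^2)$, which is itself immediate from $\W\ge\tfrac12 d^2$ (valid for small $d$) applied to the upper bound already obtained. A first crude estimate gives only $\max_k d_k\le\sqrt{\sum_k d_k^2}=O(D/\sqrt{K})$, which would yield the weaker relative error $O(D/\sqrt{K})$. To sharpen it I would use the Euler--Lagrange equation $\W_{,2}[\x_{k-1},\x_k]+\W_{,1}[\x_k,\x_{k+1}]=0$ satisfied by the minimizer together with the derivative expansions induced by $\W=\dist^2+O(\dist^3)$ and $\dist^2(\x,\x+v)=g_\x(v,v)+O(\|v\|_\V^3)$; these convert the equation into a discrete constant-metric-speed relation of the form $g_{\x_k}(\x_{k+1}-\x_k,\cdot)=g_{\x_{k-1}}(\x_k-\x_{k-1},\cdot)+O(d_k^2)$. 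A discrete Grönwall argument across the $K$ steps then propagates this relation to $d_k=D/K+O(D^2/K^2)$ uniformly in $k$, giving $\max d_k=O(D/K)$ and therefore $K\sum_k d_k^3\le K(\max d_k)\sum_k d_k^2=O(D/K)\cdot O(D^2)=O(D^3/K)$. Consequently $\E\ge D^2-O(D^3/K)=D^2(1-O(D/K))$, which together with the upper bound proves the claim.

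The main obstacle is exactly the sharpening of $\max_k d_k$ from $O(D/\sqrt{K})$ to $O(D/K)$: it demands careful Taylor expansions of $\W_{,1},\W_{,2}$ near the diagonal and a uniform Grönwall-type control of the discrete constant-speed defect across all $K$ steps, with every constant expressible only through the coercivity and continuity data \eqref{eq:boundg}--\eqref{eq:beta} and the constant in \eqref{eq:distapprox}, so that the $O$ in the statement is uniform in $\x_A,\x_B$.
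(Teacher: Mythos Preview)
Your upper bound is exactly the paper's: sample the continuous geodesic and use the expansion \eqref{eq:distapprox}.

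For the lower bound, however, you take a substantial detour. You correctly arrive at $\E\ge K\sum_k(d_k^2-Cd_k^3)$ and then propose to control the cubic sum by first proving $\max_k d_k=O(D/K)$ via the Euler--Lagrange equations and a discrete Gr\"onwall argument. Two issues: first, this requires differentiability of $\W$, which at this stage of the paper has not been assumed (only lower semi-continuity, the expansion \eqref{eq:distapprox}, and coercivity \eqref{eqn:coercivity}); second, with only the crude a-priori bound $\max_k d_k=O(D/\sqrt K)$ the error terms $O(d_k^2)$ in your discrete constant-speed relation sum to $O(D^2)$ across $K$ steps, so the Gr\"onwall propagation does not obviously close without an additional bootstrap. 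In fact the equidistribution statement $\max_k d_k=O(D/K)$ is the paper's Theorem~\ref{thm:equidistribution}, proved \emph{after} and \emph{using} the present theorem.

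The paper's argument avoids all of this. From coercivity \eqref{eqn:coercivity} and the upper bound already established one gets $\gamma(d_k)\le \underline\E/K$, hence $d_k\le 1/(3C)$ for $K$ large. On that range the map $f(a)=a^2-Ca^3$ is convex (since $f''(a)=2-6Ca\ge0$), so minimizing $K\sum_k f(d_k)$ under $\sum_k d_k\ge D$ and $d_k\le1/(3C)$ is an elementary constrained problem whose optimum is $d_1=\cdots=d_K=D/K$, giving $\underline\E\ge Kf(D/K)\cdot K=D^2(1-CD/K)$ directly. No control of $\max_k d_k$ beyond the crude coercivity bound, and no smoothness of $\W$, is needed.
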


\begin{proof}
Denote the minimum energy value by $\underline\E$.
Choosing points $\x_1,\ldots,\x_{K-1}$ such that $\dist(\x_{k-1},\x_k)=\dist(\x_0,\x_K)/K$ for $k=1,\ldots,K$,
we directly see $$\textstyle\underline\E\leq K\sum_{k=1}^K\W[\x_{k-1},\x_k]=\dist^2(\x_0,\x_K)(1+O(\dist(\x_0,\x_K)/K))\,.$$

On the other hand, letting $a_k=\dist(\x_{k-1},\x_k)$ for the minimizing $(\x_0,\ldots,\x_K)$,
we have $\underline\E\geq K\sum_{k=1}^Ka_k^2-Ca_k^3=:F(a_1,\ldots,a_K)$ for some $C>0$.
From the coercivity of $\W$ we know $\underline\E\geq K\sum_{k=1}^K\gamma(a_k)$ and thus $a_k\leq\gamma^{-1}(\dist^2(\x_0,\x_K)(\frac1K+O(\frac{\dist(\x_0,\x_K)}{K^2})))$.
Thus, for $K$ large enough, $a_k\leq1/3C$ for all $k$.
Minimization of $F$ under the constraints $\sum_{k=1}^Ka_k\geq\dist(\x_0,\x_K)$ and $a_k\leq1/3C$, $k=1,\ldots,K$,
yields $a_1=\ldots=a_K=\dist(\x_0,\x_K)/K$
and thus $\underline\E\geq F(a_1,\ldots,a_K)=\dist^2(\x_0,\x_K)(1-C\dist(\x_0,\x_K)/K)$.
\end{proof}

\begin{theorem}[Equidistribution of points along discrete geodesics]\label{thm:equidistribution}
Discrete geodesics (minimizers from Theorem \ref{thm:existence}) satisfy
$\dist(\x_{k-1},\x_k)\leq C\dist(\x_A,\x_B)/K$ for $k=1,\ldots,K\,.$%
\end{theorem}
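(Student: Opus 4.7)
Let $d=\dist(\x_A,\x_B)$ and $a_k=\dist(\x_{k-1},\x_k)$, and write $\underline\E$ for the minimum of the discrete energy. The plan is to combine the sharp energy estimate from Theorem~\ref{thm:energyConvergenceRate} with a Cauchy--Schwarz/variance argument to force the $a_k$ to be nearly equal.

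First I would recall from Theorem~\ref{thm:energyConvergenceRate} the upper bound $\underline\E\leq d^2(1+O(d/K))$. For the matching lower bound in terms of the $a_k$, I would use the local approximation $\W[\x,\tilde\x]=\dist^2(\x,\tilde\x)+O(\dist^3(\x,\tilde\x))$ to write $\underline\E\geq K\sum_{k=1}^K(a_k^2-Ca_k^3)$. To turn this into a clean inequality I need the $a_k$ themselves to be small; this comes from the coercivity \eqref{eqn:coercivity}, which together with $K\gamma(a_k)\leq\underline\E\leq d^2(1+O(d/K))$ yields $a_k\leq\gamma^{-1}(d^2/K+O(d^3/K^2))$, so $a_k\to0$ uniformly as $K\to\infty$ and in particular $Ca_k\leq\frac12$ for $K$ large enough. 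Absorbing the cubic terms then gives
\beq
K\sum_{k=1}^K a_k^2 \leq d^2(1+O(d/K))\,.
\eeq

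Next I would apply Cauchy--Schwarz together with the triangle inequality $\sum_ka_k\geq d$:
\beq
d^2 \leq \Bigl(\sum_{k=1}^K a_k\Bigr)^2 \leq K\sum_{k=1}^K a_k^2\,,
\eeq
so $K\sum a_k^2=d^2(1+O(d/K))$, and moreover $\bar a:=\tfrac1K\sum a_k$ satisfies $d/K\leq\bar a\leq \sqrt{\sum a_k^2/K}=d/K\cdot(1+O(d/K))$. The core step is then the variance identity
\beq
\sum_{k=1}^K(a_k-\bar a)^2 \;=\; \sum_{k=1}^K a_k^2 - K\bar a^2 \;\leq\; \frac{d^2}{K}(1+O(d/K)) - \frac{d^2}{K} \;=\; O\!\left(\frac{d^3}{K^2}\right)\,,
\eeq
which immediately gives the pointwise bound $|a_k-\bar a|\leq O(d^{3/2}/K)$ for every $k$. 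Combining this with the bound $\bar a\leq d/K\cdot(1+O(d/K))$ produces $a_k\leq d/K+O(d^{3/2}/K)$, and for $d$ in any bounded regime (which is all we need, since the claim is vacuous for $d$ bounded below by a constant independent of $K$) this yields the asserted $a_k\leq Cd/K$.

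The main technical obstacle is the bootstrap at the beginning: the lower bound $\underline\E\geq K\sum(a_k^2-Ca_k^3)$ is only useful once we know $a_k$ is small enough to absorb the cubic correction, and this requires invoking the coercivity assumption \eqref{eqn:coercivity} to produce a crude \emph{a priori} smallness of each $a_k$ from the energy bound. Everything else is a clean Cauchy--Schwarz/variance computation; the constant $C$ in the final inequality depends on the ambient diameter on which we work but is otherwise uniform in $K$.
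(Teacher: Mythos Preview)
Your variance approach is different from the paper's, which argues by contradiction directly on the maximal step $\gamma_K=\max_k a_k$: it splits the discrete energy into the three pieces before, at, and after the maximal step, applies the two-sided bound of Theorem~\ref{thm:energyConvergenceRate} to each sub-path, and derives a contradiction from the hypothesis $K\gamma_K\to\infty$. Your idea is natural, but there is a genuine gap in the ``absorbing the cubic terms'' step.

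From $\underline\E\geq K\sum_k(a_k^2-Ca_k^3)$ together with $Ca_k\leq\tfrac12$ you only obtain $K\sum_k a_k^2\leq 2d^2(1+O(d/K))$, or more precisely $K\sum_k a_k^2\leq d^2(1+O(d/K))/(1-CM)$ with $M=\max_k a_k$. The correction you need to absorb is of order $M$, not of order $d/K$, and $M=O(d/K)$ is precisely the claim to be proved---so the displayed bound $K\sum_k a_k^2\leq d^2(1+O(d/K))$ is circular. With the honest estimate the variance computation yields $\sum_k(a_k-\bar a)^2=O(d^2M/K)$ rather than $O(d^3/K^2)$, and you arrive at a self-referential inequality of the form $M\leq d/K+Cd\sqrt{M/K}$ (plus lower order terms). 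This \emph{can} be closed---solving the quadratic in $\sqrt{M}$ gives $M\leq C'd/K$ for $d$ in any bounded range---but that bootstrap is the actual content of the argument, and you have skipped it. The paper's sub-path decomposition avoids this circularity because the sharp energy bounds on the two outer sub-paths already carry the correct constants, so the maximal step is isolated without any iteration.
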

\begin{proof}
For given $K$, let $j_K\in\{1,\ldots,K\}$ be
such that $\gamma_K:=\dist(\x_{j_K-1},\x_{j_K})$ is largest.
Furthermore abbreviate $d:=\dist(\x_0,\x_K)$ and $\alpha_K:=\dist(\x_0,\x_{j_K-1})/d$.
We have
\beqa
\hspace*{2ex}
&&\hspace*{-2ex}d^2(1\!+\!O(d/K))
=\E[(\x_0,\ldots,\x_K)] \\
&&\geq\tfrac{K}{j_K-1}\E[(\x_0,\ldots,\x_{j_K-1})]+K\gamma_K^2(1-O(\gamma_K))+\tfrac{K}{K-j_K}\E[(\x_{j_K},\ldots,\x_K)]\\
&&\geq K\gamma_K^2(1\!-\!O(\gamma_K))\!+\!\tfrac{K}{j_K\!-\!1}\dist^2(\x_0,\x_{j_K\!-\!1})(1\!-\!O(\gamma_K))
\!+\!\tfrac{K}{K\!-\!j_K}\dist^2(\x_{j_K},\x_K)(1\!-\!O(\gamma_K))\\
&&\geq (1-O(\gamma_K))(K\gamma_K^2+d^2[\tfrac{K}{j_K-1}\alpha_K^2+\tfrac{K}{K-j_K}(1-\alpha_K-\tfrac{\gamma_K}{d})^2])\,.
\eeqa
This is minimized by $\alpha_K=\frac{(j_K-1)(1-\frac{\gamma_K}{d})}{K-1}$
and yields
\beqa
d^2(1+O(d/K))
\geq (1-O(\gamma_K))(K\gamma_K^2+d^2\tfrac{K}{K-1}(1-\tfrac{\gamma_K}{d})^2)\,.
\eeqa
For $\gamma_K=\tfrac{\omega_K}{K}$ with $\omega_K \to \infty$ for $K\to \infty$
and $\omega_K \leq C \sqrt{K}$ by the trivial estimate $\gamma_K \leq \tfrac{C}{\sqrt{K}}$ 
one obtains 
$d^2 + \tfrac{Cd^3}{K} \geq  (1-\tfrac{C}{\sqrt{K}}) (\tfrac{\omega_K^2}{K} +d^2 \tfrac{K}{K-1}
(1-\tfrac{\omega_K}{dK})^2)$
which yields a contradiction, so $\dist(\x_{j_K-1},\x_{j_K})\leq Cd/K$.
\end{proof}

For the subsequent estimates it is convenient to relate the function $\W$ to the metric $g$; we use the following.

\begin{lemma}[Consistency conditions]\label{thm:consistent}
If $(w,v)\mapsto\W[\x\!+\!w,\x\!+\!v]$ is twice G\^ateaux-dif\-ferentiable on $\V\!\times\!\V$ for $\x\!\in\!\X$,
then $\W[\x,\tilde\x]=\dist^2(\x,\tilde\x)+O(\dist^3(\x,\tilde\x))$ for $\tilde\x$ close to $\x$
implies\vspace{-2ex} $$\W[\x,\x] = 0\,,\quad \W_{,2}[\x,\x](v) = 0\,,\quad \W_{,22}[\x,\x](v,w) = 2g_\x(v,w)$$ for any $v,w\in\V$.
Furthermore,
$\W_{,1}[\x,\x](v) = 0$ and  $$\W_{,11}[\x,\x](v,w) = -\W_{,12}[\x,\x](v,w) = -\W_{,21}[\x,\x](v,w) = \W_{,22}[\x,\x](v,w)\,.$$
If $(w,v)\mapsto\W[\x+w,\x+v]$ is even three times Fr\'echet-differentiable, the implication becomes an equivalence.
\end{lemma}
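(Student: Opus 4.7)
My plan is to identify the low-order Taylor coefficients of the twice Gâteaux-differentiable map $(u_1,u_2)\mapsto\W[u_1,u_2]$ at the diagonal point $(\x,\x)$ by matching them against the asymptotic expansion of $\dist^2$ in the consistency relation $\W=\dist^2+O(\dist^3)$. The zeroth-order identity $\W[\x,\x]=0$ is immediate from $\dist(\x,\x)=0$; all remaining identities will emerge from this matching.

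The central technical step is to establish the sharp distance asymptotic
\beq
\dist^2(\x+sv,\x+sw) = s^2 g_\x(v-w,v-w) + o(s^2) \quad \text{as } s\to 0,
\eeq
for any $v,w\in\V$. For the upper bound I would insert the affine interpolation $\theta\mapsto\x+sv+\theta s(w-v)$ as a competitor into \eqref{eq:contenergy}; the continuity modulus \eqref{eq:beta} of $g$ replaces the integrand by $g_\x(s(w-v),s(w-v))$ at error $\beta(O(s))\|s(w-v)\|_\V^2=o(s^2)$. For the lower bound, along a minimizing path $(\x_\tau(\theta))_{\theta\in[0,1]}$, the constant-speed property together with coercivity \eqref{eq:boundg} force $\|\dot\x_\tau\|_\V^2\leq\dist^2/c^\ast=O(s^2)$, hence $\|\x_\tau(\theta)-\x\|_\X=O(s)$ uniformly in $\theta$; \eqref{eq:beta} then swaps $g_{\x_\tau(\theta)}$ for $g_\x$ at cost $o(1)\|\dot\x_\tau\|_\V^2$, and Jensen's inequality applied to the convex quadratic $\zeta\mapsto g_\x(\zeta,\zeta)$ yields $\int_0^1 g_\x(\dot\x_\tau,\dot\x_\tau)\,d\theta\geq g_\x(s(w-v),s(w-v))$. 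I expect this asymptotic to be the main obstacle, since coercivity alone captures only the correct order, not the correct leading constant.

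Granted the asymptotic, twice Gâteaux-differentiability along the one-parameter family $s\mapsto\W[\x+sv,\x+sw]$ yields
\beq
\W[\x+sv,\x+sw] = s[\W_{,1}(v)+\W_{,2}(w)] + \tfrac{s^2}{2}[\W_{,11}(v,v)+2\W_{,12}(v,w)+\W_{,22}(w,w)] + o(s^2)
\eeq
(all derivatives at $(\x,\x)$), and this must equal $s^2 g_\x(v-w,v-w)+o(s^2)$. Matching the $O(s)$ coefficient for arbitrary $v,w$ forces $\W_{,1}[\x,\x]=\W_{,2}[\x,\x]=0$. Specialising the $O(s^2)$ match first to $w=0$ and then to $v=0$, and polarising, gives $\W_{,11}[\x,\x]=\W_{,22}[\x,\x]=2g_\x$; the residual cross term then gives $\W_{,12}[\x,\x](v,w)=-2g_\x(v,w)$. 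The remaining equality $\W_{,21}=\W_{,12}$ follows from Clairaut's symmetry of mixed second partials applied to $(u_1,u_2)\mapsto\W[u_1,u_2]$, viewed as a symmetric bilinear form on $(\V\times\V)\times(\V\times\V)$.

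For the converse under thrice Fréchet regularity, I would Taylor-expand $\W[\x,\tilde\x]$ to third order in the second argument using the given identities, obtaining $\W[\x,\tilde\x]=g_\x(\tilde\x-\x,\tilde\x-\x)+O(\|\tilde\x-\x\|_\V^3)$; combining with the corresponding $O$-refinement $\dist^2(\x,\tilde\x)=g_\x(\tilde\x-\x,\tilde\x-\x)+O(\|\tilde\x-\x\|_\V^3)$ of the distance asymptotic (available under the Lipschitz modulus $\beta$ implied by Fréchet regularity of $g$) and using the equivalence $\|\tilde\x-\x\|_\V\sim\dist(\x,\tilde\x)$, the consistency relation $\W=\dist^2+O(\dist^3)$ is recovered.
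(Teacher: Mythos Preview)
Your approach is essentially correct and takes a genuinely different, more unified route than the paper. You establish the two-point asymptotic $\dist^2(\x+sv,\x+sw)=s^2g_\x(v-w,v-w)+o(s^2)$ and read off all second-order data from a single Taylor match of $s\mapsto\W[\x+sv,\x+sw]$. The paper instead works one argument at a time: it first shows $\W_{,22}[\x,\x]=2g_\x$ by a contradiction argument from the one-variable expansion $t\mapsto\W[\x,\x+tv]$, applies the same reasoning to the first argument for $\W_{,11}$, and obtains the mixed derivatives by differentiating the already-established identities $\W_{,1}[\x(t),\x(t)]=0$ and $\W_{,2}[\x(t),\x(t)]=0$ along an arbitrary curve through $\x$. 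Your route is conceptually cleaner; the paper's avoids the two-point distance estimate. Your converse argument coincides with the paper's.

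There is one gap you should close. In your expansion the $s^2$-coefficient is actually $\tfrac12[\W_{,11}(v,v)+\W_{,12}(v,w)+\W_{,21}(w,v)+\W_{,22}(w,w)]$, not the version with $2\W_{,12}(v,w)$; consequently your matching yields only the symmetric combination $\W_{,12}(v,w)+\W_{,21}(w,v)=-4g_\x(v,w)$, and you invoke Clairaut to split it. Symmetry of the second derivative, however, is not automatic under mere G\^ateaux hypotheses (the classical Schwarz theorem requires Fr\'echet differentiability of $D\W$ or continuity of the second derivative). The paper's diagonal trick repairs this at no cost: since your first-order conclusion $\W_{,1}[\tilde\x,\tilde\x]=0$ holds at every base point $\tilde\x$ near $\x$, differentiating $s\mapsto\W_{,1}[\x+sw,\x+sw](v)$ at $s=0$ gives $\W_{,11}[\x,\x](v,w)+\W_{,12}[\x,\x](v,w)=0$, hence $\W_{,12}=-\W_{,11}=-2g_\x$ directly; the analogous differentiation of $\W_{,2}$ gives $\W_{,21}=-\W_{,22}=-2g_\x$.
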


\begin{proof}
It is readily shown that $\dist^2(\x,\tilde\x)=g_\x(\tilde\x-\x,\tilde\x-\x)+O(\beta(\|\tilde\x-\x\|_\X))\|\tilde\x-\x\|_\V^2$.

Let $\W[\x,\tilde\x]=\dist^2(\x,\tilde\x)+O(\dist^3(\x,\tilde\x))$ for $\tilde\x$ close to $\x$,
then obviously $\W[\x,\x]=0$ and $\W_{,2}[\x,\x](v)=\frac\d{\d t}\W[\x,\x+tv]=0$.
Now assume $\W_{,22}[\x,\x](v,v)\neq 2g_\x(v,v)$ for some $v\in\V$
(note that due to the bilinearity it is sufficient to show equality of the quadratic forms).
Without loss of generality, let $\W_{,22}[\x,\x](v,v)>\alpha g_\x(v,v)$ for some $\alpha>2$.
This implies $\W[\x,\x+tv]> \frac12\left(\frac{\alpha}{2} +1\right) g_\x(tv,tv)$
for $t$ small enough,
which (using $\dist^2(\x,\x+tv)=g_\x(tv,tv)+o(t^2)$)
is strictly greater than $(\tfrac{\alpha}{4}+\tfrac12) \dist^2(\x,\x+tv)+o(\dist^2(\x,\x+tv))$ and thus a contradiction.

The above applied to the first argument of $\W$ instead of the second implies $\W_{,1}[\x,\x]=0$ and $\W_{,11}[\x,\x](v,w)=2g_\x(v,w)$.
Finally, for any curve $(\x(t))_{t\in\R}$ in $\X$ we can differentiate $0=\W_{,1}[\x(t),\x(t)]$ and $0=\W_{,2}[\x(t),\x(t)]$ with respect to $t$,
yielding $\W_{,12}[\x,\x]=-\W_{,11}[\x,\x]$ and $\W_{,21}[\x,\x]=-\W_{,22}[\x,\x]$.

If $v\mapsto\W[\x,\x+v]$ is three times Fr\'echet-differentiable and $\W_{,22}[\x,\x](v,w) = 2g_\x(v,w)$,
then by Taylor's theorem for $\tilde\x=\x+tv$, $\W[\x,\tilde\x]=g_\x(tv,tv)+O(t^3)=\dist^2(\x,\tilde\x)+O(\dist^3(\x,\tilde\x))$.
\end{proof}

\begin{theorem}[Uniqueness of discrete geodesics]\label{thm:uniqueness}
If $(w,v)\mapsto\W[\x+w,\x+v]$ is twice Fr\'echet-differentiable on $\V\times \V$ for $\x\in\manifold$
and if $\|\x_B-\x_A\|_{\V}$ is sufficiently small 
there exists a unique discrete geodesic $(\x_0,\ldots, \x_K)$  with $\x_0=\x_A$ and $\x_K=\x_B$.
\end{theorem}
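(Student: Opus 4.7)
Existence already follows from Theorem~\ref{thm:existence}, so the plan is to establish uniqueness by showing that when $\|\x_B-\x_A\|_\V$ is small, every discrete geodesic lies in a small $\V$-ball around the constant path $\x_1=\ldots=\x_{K-1}=\x_A$, and that on such a ball the discrete energy $\E$, regarded as a functional of the interior vertices $(\x_1,\ldots,\x_{K-1})\in\V^{K-1}$, is strictly convex. The localization step uses Theorem~\ref{thm:equidistribution} together with the equivalence $\sqrt{c^\ast}\|\x-\x_A\|_\V\leq\dist(\x_A,\x)\leq\sqrt{C^\ast}\|\x-\x_A\|_\V$: any minimizer satisfies $\|\x_{k-1}-\x_k\|_\V\leq (C\sqrt{C^\ast/c^\ast}/K)\|\x_B-\x_A\|_\V$, so $\|\x_k-\x_A\|_\V\leq C'\|\x_B-\x_A\|_\V$ by a telescoping triangle inequality.

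For the convexity step, Lemma~\ref{thm:consistent} identifies $\W_{,11}[\x_A,\x_A]=-\W_{,12}[\x_A,\x_A]=-\W_{,21}[\x_A,\x_A]=\W_{,22}[\x_A,\x_A]=2g_{\x_A}$, and a direct computation at the constant path yields for $v=(v_1,\ldots,v_{K-1})\in\V^{K-1}$ (extended by $v_0=v_K=0$)
\[
D^2\E(v,v)=2K\sum_{k=1}^K g_{\x_A}(v_k-v_{k-1},v_k-v_{k-1})\geq\frac{2c^\ast}{K-1}\sum_{k=1}^{K-1}\|v_k\|_\V^2\,,
\]
where the lower bound combines the coercivity of $g$ with the discrete Friedrichs inequality $\sum_{k=1}^K\|v_k-v_{k-1}\|_\V^2\geq\frac{1}{K(K-1)}\sum_{k=1}^{K-1}\|v_k\|_\V^2$ (obtained from $v_k=\sum_{j\leq k}(v_j-v_{j-1})$ and Cauchy--Schwarz). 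The twice Fr\'echet-differentiability of $(w,v)\mapsto\W[\x+w,\x+v]$ implies that each block $\W_{,ij}$, $i,j\in\{1,2\}$, depends continuously on its base points in the operator norm on $\V\times\V$; hence for $(\x_1,\ldots,\x_{K-1})$ in a small enough $\V$-ball around the constant path, the deviations $\W_{,ij}[\x_{k-1},\x_k]-\W_{,ij}[\x_A,\x_A]$ shift $D^2\E$ by less than $\frac{c^\ast}{K-1}\sum_k\|v_k\|_\V^2$, preserving strict positive definiteness and hence strict convexity of $\E$ on this convex ball.

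It then suffices to choose $\|\x_B-\x_A\|_\V$ so small that the bound in the first step forces every minimizer into the strict convexity region: two distinct minimizers would contradict strict convexity, while existence is granted by Theorem~\ref{thm:existence}. The main obstacle I anticipate is quantitative: the lower bound on $D^2\E$ at the constant path degrades like $1/K$, so closing the perturbation argument requires the full norm-continuity of the $\W_{,ij}$ guaranteed by Fr\'echet (rather than merely G\^ateaux) differentiability, with the admissible smallness of $\|\x_B-\x_A\|_\V$ allowed to depend on $K$.
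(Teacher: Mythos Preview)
Your argument is correct and takes a different but closely related route. The paper applies the implicit function theorem to the Euler--Lagrange map $F(X_{0,K},X_{1,\ldots,K-1})=(\W_{,2}[\x_{k-1},\x_k]+\W_{,1}[\x_k,\x_{k+1}])_{k}$: at the constant path $(\x_A,\ldots,\x_A)$, Lemma~\ref{thm:consistent} identifies the Jacobian with the block tridiagonal operator $2\,\mathrm{diag}(g_{\x_A}^{1/2})\,T\,\mathrm{diag}(g_{\x_A}^{1/2})$, where $T$ is the standard tridiagonal matrix with $2\Id$ on the diagonal and $-\Id$ off it, manifestly invertible by Gaussian elimination; the IFT then yields a unique critical point near the constant path for $\x_B$ near $\x_A$. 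Your approach is the variational counterpart: the same Hessian computation gives strict convexity of $\E$ on a ball around the constant path, and your localization step (via Theorem~\ref{thm:equidistribution}) confines every minimizer to that ball. The paper's route is shorter and, as a byproduct, delivers smooth dependence of the discrete geodesic on its endpoints---exploited later when differentiating $\Log{2}$---whereas your route makes explicit the localization of global minimizers that the paper's IFT conclusion leaves implicit. One shared caveat: both the norm-continuity of $\W_{,ij}$ you invoke for the perturbation step and the $C^1$-regularity the standard IFT requires need \emph{continuity} of the second derivatives of $\W$, which ``twice Fr\'echet-differentiable'' alone does not literally guarantee; in the paper's applications this is covered by hypothesis~\ref{enm:smoothEnergy}.
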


\begin{proof} Without any restriction we assume $\x_A \in \V$. Otherwise, we have to work with offsets $\x_k-\x_A$ 
instead of points $\x_k$.
For $X_{0,K} := (\x_0,\x_K)$ and  $X_{1,\ldots,K-1} := (\x_1,\ldots,\x_{K-1})$ we define
the function
$F: \manifold^{K+1} \to (\V')^{K-1}; (X_{0,K},  X_{1,\ldots,K-1}) \mapsto
(\W_{,2}[\x_{k-1},\x_k] + \W_{,1}[\x_{k},\x_{k+1}])_{k=1,\ldots,K-1}\,,$
where $\V'$ denotes the dual space of $\V$.
Then the block tridiagonal operator $\mathcal{A}= D_{X_{1,\ldots,K-1}} F(X_{0,K},  X_{1,\ldots,K-1})$ is a $(K\!-\!1)\times (K\!-\!1)$
block operator given by
\beqa
\mathcal{A}_{kk} &=& \W_{,22}[\x_{k-1},\x_k] + \W_{,11}[\x_{k},\x_{k+1}]\,, \\
\mathcal{A}_{k(k-1)} &=& \W_{,21}[\x_{k-1},\x_k]\,, \quad
\mathcal{A}_{k(k+1)} = \W_{,12}[\x_{k},\x_{k+1}] \,.
\eeqa
Now, using Lemma \ref{thm:consistent}
we obtain for the trivial geodesic $(\x_A,\ldots, \x_A)$  that
\beq
\mathcal{A} = 2 \left( \begin{smallmatrix}
2 g & -g & & & \\
-g & 2 g &-g & & \\
& \ddots & \ddots &\ddots & & \\
&& -g & 2 g &-g & \\
& & &  -g & 2 g \\
\end{smallmatrix}\right)
\!= 2\,  \mathrm{diag}(g^{\frac12})  \! \left( \begin{smallmatrix}
2 \Id & -\Id & & & \\
-\Id & 2 \Id &-\Id & & \\
& \ddots & \ddots &\ddots & & \\
&& -\Id & 2 \Id &-\Id & \\
& & &  -\Id & 2 \Id \\
\end{smallmatrix}\right)
\! \mathrm{diag}(g^{\frac12})\,,
\eeq
where $g=g_{\x_A}$. Hence, the inverse of $\mathcal{A}$ can be computed by
Gaussian elimination, which ensures that $\mathcal{A} = D_{X_{1,\ldots,K-1}} F(\x_A,\ldots, \x_A)$ is invertible with bounded inverse, and
thus the claim follows by the implicit function theorem.
\end{proof}

In what follows, we aim to prove convergence of discrete geodesics against continuous ones.
To this end we identify any discrete path $(\x_0,\ldots,\x_K)$ on $\manifold$
with its piecewise geodesic interpolation $\hat \x^K:[0,1]\to\manifold$, \ie\ every segment $\hat \x^K|_{[k\tau,(k+1)\tau]}$ shall be the shortest continuous connecting geodesic between $\x_k$ and $\x_{k+1}$.
Now, we define an energy $\E^K$ on continuous paths via
$\E^K[(\x(t))_{t\in[0,1]}]:=\E[(\x_0,\ldots,\x_K)]$
if $\x(t)=\hat \x^K(t)$ for some $\x_0,\ldots,\x_K\in\manifold$ with $\x_0= \x_A$, $\x_K= \x_B$,
and $\E^K[(\x(t))_{t\in[0,1]}]=\infty$ else.
Based on these notational preliminaries we obtain the following convergence result.

\begin{theorem}[$\Gamma$-convergence of the discrete energy]\label{thm:Gamma}
In the $L^2((0,1);\X)$-topology the $\Gamma$-limit of $\E^K$ for $K\to\infty$ is $\energy\,.$
\end{theorem}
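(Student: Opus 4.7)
The statement asserts $\Gamma$-convergence of $\E^K$ to $\energy$ in the $L^2((0,1);\X)$-topology, which requires (i) the liminf inequality $\liminf_K \E^K[\x^K] \geq \energy[\x]$ for every sequence $\x^K \to \x$ in $L^2((0,1);\X)$, and (ii) the existence of a recovery sequence $\x^K \to \x$ with $\limsup_K \E^K[\x^K] \leq \energy[\x]$. Throughout I use that, by construction of $\E^K$ on continuous paths, a bounded value of $\E^K[\x^K]$ forces $\x^K$ to coincide with the piecewise geodesic interpolant $\hat\x^K$ of some discrete path $(\x_0^K,\ldots,\x_K^K)$ in $\manifold$.

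For the liminf inequality I would first restrict to a subsequence attaining the liminf along which $\E^K[\x^K] \leq M$ (otherwise the bound is trivial). Coercivity \eqref{eqn:coercivity} then yields $\gamma(\dist(\x_{k-1}^K,\x_k^K)) \leq M/K$, so $d_K^\ast := \max_k \dist(\x_{k-1}^K,\x_k^K) \leq \gamma^{-1}(M/K) \to 0$ \emph{uniformly} in $k$. Because piecewise geodesic interpolants have constant speed, $\energy[\hat\x^K] = K\sum_k \dist^2(\x_{k-1}^K,\x_k^K)$, and the assumption $\W[\x,\tilde\x] = \dist^2(\x,\tilde\x) + O(\dist^3(\x,\tilde\x))$ applied segment-wise gives $\E^K[\x^K] \geq (1 - Cd_K^\ast)\energy[\hat\x^K]$, so $\energy[\hat\x^K]$ stays bounded. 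This bounds $\dot{\hat\x^K}$ in $L^2((0,1);\V)$; combined with $\hat\x^K \to \x$ in $L^2((0,1);\X)$, the Arzel\`a--Ascoli extraction used in Theorem \ref{thm:existenceContinuous} yields, along a further subsequence, weak convergence in $W^{1,2}((0,1);\x_A+\V)$ and strong convergence in $C^0([0,1];\X)$ to $\x$. The Mazur/Fatou lower semi-continuity argument from that same proof then delivers $\energy[\x] \leq \liminf \energy[\hat\x^K] \leq \liminf \E^K[\x^K]$.

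For the recovery sequence I may assume $\energy[\x] < \infty$, so $\x \in W^{1,2}((0,1);\x_A+\V) \hookrightarrow C^{0,1/2}([0,1];\V)$. I would set $\x_k^K := \x(k/K)$ and let $\hat\x^K$ be the associated piecewise geodesic interpolant, which exists for $K$ large since the H\"older modulus gives $\|\x_k^K - \x_{k-1}^K\|_\V \leq C/\sqrt{K}$. The Jensen-inequality calculation from \eqref{eq:contEstimates} applied to $\x$ yields $K\sum_k \dist^2(\x_{k-1}^K,\x_k^K) \leq \energy[\x]$, and $\W = \dist^2 + O(\dist^3)$ together with the uniform bound $\dist(\x_{k-1}^K,\x_k^K) \leq C/\sqrt{K}$ then gives $\E^K[\hat\x^K] \leq \energy[\x](1 + C/\sqrt{K})$. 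Convergence $\hat\x^K \to \x$ in $L^2((0,1);\X)$ follows from the estimate $\|\hat\x^K(t) - \x(t)\|_\V \leq C/\sqrt{K}$ on each subinterval: the geodesic segment has $\V$-diameter $\leq \dist(\x_{k-1}^K,\x_k^K)/\sqrt{c^\ast}$ by \eqref{eq:boundg}, and $\x$ itself varies by at most $C/\sqrt{K}$ by H\"older continuity; the compact embedding $\V\hookrightarrow\X$ then upgrades this to $\|\hat\x^K - \x\|_{L^\infty([0,1];\X)} \to 0$.

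The main obstacle is the liminf step, and specifically the passage from the integrated bound $\E^K[\x^K] \leq M$ to the \emph{uniform} nodal smallness $\max_k \dist(\x_{k-1}^K,\x_k^K) \to 0$ needed to absorb the $O(\dist^3)$ discrepancy between $\W$ and $\dist^2$ into a multiplicative $(1-o(1))$ factor. This is precisely why the coercivity assumption \eqref{eqn:coercivity} is imposed — without it, energy could concentrate on a few long discrete steps and the comparison with $\energy[\hat\x^K]$ would break down. Once this uniform smallness is established, the argument reduces to the lower semi-continuity machinery already developed for Theorem \ref{thm:existenceContinuous}.
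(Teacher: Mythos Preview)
Your proposal is correct and follows essentially the same route as the paper: both arguments use coercivity \eqref{eqn:coercivity} to force uniform smallness of the discrete steps in the liminf part, compare $\E^K$ to $\energy[\hat\x^K]$ via the $O(\dist^3)$ defect, and then invoke the weak lower semi-continuity established in Theorem~\ref{thm:existenceContinuous}; the recovery sequence is likewise the piecewise geodesic interpolant of $\x(k/K)$ with the $O(K^{-1/2})$ error coming from the $C^{0,1/2}$ embedding. Your write-up is in fact slightly more careful than the paper's in that you explicitly verify $\hat\x^K\to\x$ in $L^2((0,1);\X)$ for the recovery sequence, which the paper leaves implicit.
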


\begin{proof}
To verify $\Gamma$-convergence we have to establish the two defining properties,
the limsup- and the liminf-inequality.

\noindent [\emph{limsup-inequality}] For an arbitrary $\x:[0,1]\to\manifold$ with $\x \in L^2((0,1);\X)$
we have to show that there exists a sequence of curves $\hat \x^K:[0,1]\to\manifold$ with
$\hat\x^K\to\x$ in $L^2((0,1);\X)$ and
$\limsup_{K\to \infty} \E^K[(\hat \x^K(t))_{t\in[0,1]}] \leq \energy[(\x(t))_{t\in[0,1]}]\,.$
Without any restriction we can assume $\energy[(\x(t))_{t\in[0,1]}]<\infty$.
Applying the Cauchy--Schwarz inequality we get
\beqan
\dist(\x(t),\x(s)) &=& \int_t^s \!\!\! \sqrt{g_{\x(r)}(\dot\x(r),\dot\x(r))} \d r 
\textstyle\leq \sqrt{|s-t| \int_t^s g_{\x(r)}(\dot\x(r),\dot\x(r)) \d r } \,,\; \label{eq:CS}
\eeqan
which immediately implies $W^{1,2}((0,1);\manifold)\subset C^{0,\frac12}([0,1];\manifold)$
and thus H\"older continuity of paths with finite path energy.
Now, let $\hat \x^K$ denote the piecewise geodesic interpolation of
$(\x(\frac{0}K),\ldots,\x(\frac{K}K))$.
For any $K$ we have
\beqa
\energy[(\x(t))_{t\in[0,1]}] &\geq&\textstyle K\sum_{k=1}^K\dist^2(\x(\tfrac{k-1}K),\x(\tfrac{k}K)) \\
&\geq&\textstyle K\sum_{k=1}^K \W[\x(\tfrac{k-1}K),\x(\tfrac{k}K)] - C \,K\, \sum_{k=1}^K \dist^3(\x(\tfrac{k-1}K),\x(\tfrac{k}K))) \\
&\geq&\textstyle \E^K[(\hat \x^K(t))_{t\in[0,1]}] - C \,K\, \sum_{k=1}^K K^{-\frac32} \energy[(\hat \x^K(t))_{t\in[\frac{k-1}{K},\frac{k}{K}]}]^{\frac32} \\
&\geq&\textstyle \E^K[(\hat \x^K(t))_{t\in[0,1]}] \big(1 - C \,K^{-\frac12} \sqrt{\energy[(\x(t))_{t\in[0,1]}]}\big)\,,
\eeqa
where we have used \eqref{eq:CS}. Letting $K\to\infty$ yields the desired limsup--inequality.

\noindent [\emph{liminf-inequality}] We have to show that for any sequence of curves $(\x^K)_{K\geq1}$ with
$\x^K:[0,1]\to\manifold$ and $\x^K \rightarrow \x$ in $L^2((0,1);\X)$ the inequality
$
\liminf_{K\to \infty} \E^K[(\x^K(t))_{t\in[0,1]}] \geq \energy[(\x(t))_{t\in[0,1]}]
$
holds. Without any restriction we assume that
$
\E^K[(\x^K(t))_{t\in[0,1]}]  
< \bar \E < \infty
$
uniformly.
Thus, $\W[\x^K(\frac{k-1}K),\x^K(\frac{k}K)]\rightarrow 0$ uniformly as $K\to\infty$ so
that due to the coercivity  $\dist(\x^K(\frac{k-1}K),\x^K(\frac{k}K))\rightarrow 0$ uniformly as well.
Next, we estimate
\beqa
&& \hspace{-4ex}\liminf_{K\to\infty}\E^K[(\x^K(t))_{t\in[0,1]}] =
\liminf_{K\to\infty} K {\textstyle\sum_{k=1}^K}   \W[\x^K(\tfrac{k-1}K),\x^K(\tfrac{k}K)] \\
&&\geq \liminf_{K\to\infty} K {\textstyle\sum_{k=1}^K} \dist^2(\x^K(\tfrac{k-1}K),\x^K(\tfrac{k}K))
\left(1- C \, \dist(\x^K(\tfrac{k-1}K),\x^K(\tfrac{k}K)) \right) \\
&&=\liminf_{K\to\infty}\energy[(\x^K(t))_{t\in[0,1]}]\,,
\eeqa
which also shows that $\x^K$ is uniformly bounded in $W^{1,2}((0,1);\manifold)$.
Due to the reflexivity of $\V$, a subsequence (for simplicity again denoted by $(\x^K)_{K\geq 1}$) weakly converges against $\x$
in $W^{1,2}((0,1);\manifold)$.
Due to the sequential weak lower semi-continuity of the energy $\energy$ (\cf Theorem\,\ref{thm:existenceContinuous}) we finally
obtain the requested estimate
\beqa
\liminf_{K\to\infty}\E^K[(\x^K(t))_{t\in[0,1]}] \geq
\liminf_{K\to\infty}\energy[(\x^K(t))_{t\in[0,1]}] \geq
\energy[(\x(t))_{t\in[0,1]}] \,,
\eeqa
which concludes the proof.
\end{proof}

\begin{corollary}[Convergence of discrete geodesics]\label{thm:Linfty}
Minimizers of the discrete path energies $\E^K$ converge against minimizers of the continuous path energy $\energy$
in $C^0([0,1];\X)$.
\end{corollary}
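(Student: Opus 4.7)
I would combine the $\Gamma$-convergence of Theorem~\ref{thm:Gamma} with an equi-compactness argument for discrete geodesics in the stronger $C^0([0,1];\X)$ topology. Let $(\x^K)_K$ be a sequence of discrete geodesics, viewed as their piecewise geodesic interpolations, with fixed endpoints $\x_A,\x_B$; Theorem~\ref{thm:energyConvergenceRate} already guarantees that $\E^K[\x^K]$ stays uniformly bounded by $\dist^2(\x_A,\x_B)(1+O(1/K))$.

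First I translate this discrete energy bound into a uniform bound on the continuous energies $\energy[\x^K]$. Since each segment of the interpolant is a constant-speed continuous geodesic,
\[
\energy[\x^K]=\sum_{k=1}^K K\,\dist^2(\x_{k-1}^K,\x_k^K)\,.
\]
The coercivity~\eqref{eqn:coercivity} forces $\dist(\x_{k-1}^K,\x_k^K)\le\gamma^{-1}(\E^K[\x^K]/K)\to 0$ uniformly in $k$, so the expansion $\W[\x,\tilde\x]=\dist^2(\x,\tilde\x)+O(\dist^3(\x,\tilde\x))$ yields $\energy[\x^K]\le C\,\E^K[\x^K]$ for all $K$ large enough.

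Second, I establish equi-H\"older continuity. Applying the Cauchy--Schwarz bound~\eqref{eq:CS} on an arbitrary subinterval $[s,t]\subset[0,1]$ together with the $\V$-norm equivalence stated after~\eqref{eqn:RiemannianDistance} gives
\[
\|\x^K(s)-\x^K(t)\|_\V \le \tfrac{1}{\sqrt{c^\ast}}\sqrt{|t-s|\,\energy[\x^K]}\,,
\]
so $(\x^K-\x_A)_K$ is uniformly bounded in $C^{0,1/2}([0,1];\V)$. The compact embedding $\V\hookrightarrow\X$ then allows Arzel\`a--Ascoli (as in Theorem~\ref{thm:existenceContinuous}) to extract a subsequence converging in $C^0([0,1];\X)$ to some limit $\x^*$ with $\x^*(0)=\x_A$, $\x^*(1)=\x_B$.

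Since $C^0([0,1];\X)$-convergence implies convergence in $L^2((0,1);\X)$, the fundamental property of $\Gamma$-convergence together with Theorem~\ref{thm:Gamma} identifies $\x^*$ as a minimizer of $\energy$: the liminf inequality yields $\energy[\x^*]\le\liminf_K\E^K[\x^K]$, while feeding any competing path into the limsup construction of Theorem~\ref{thm:Gamma} forces $\liminf_K\E^K[\x^K]\le\inf\energy$. Because the argument applies to every subsequence, every subsequence of $(\x^K)_K$ admits a further $C^0([0,1];\X)$-convergent subsequence whose limit is a continuous geodesic, which is the asserted convergence. The step I expect to be most delicate is the promotion of the $L^2$-type compactness inherent to $\Gamma$-convergence to genuine $C^0$-compactness, which requires both the $\dist$-to-$\V$-norm equivalence and the compact embedding $\V\hookrightarrow\X$.
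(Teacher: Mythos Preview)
Your argument is correct and follows essentially the same approach as the paper: uniform boundedness of the discrete energies via Theorem~\ref{thm:energyConvergenceRate}, translation into a $W^{1,2}((0,1);\V)$ bound on the interpolants (which the paper phrases as ``$\E^K$ is coercive with respect to $\x_A+W^{1,2}((0,1);\V)$'' and you spell out via $\energy[\x^K]\le C\,\E^K[\x^K]$), then the embedding $W^{1,2}((0,1);\V)\hookrightarrow C^{0,1/2}([0,1];\V)$ together with Arzel\`a--Ascoli and the compact embedding $\V\hookrightarrow\X$ to obtain $C^0([0,1];\X)$-precompactness, and finally $\Gamma$-convergence to identify the limit. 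The only cosmetic difference is the order: the paper first extracts an $L^2$-limit and applies $\Gamma$-convergence, then upgrades to $C^0$, whereas you go directly to $C^0$-compactness and note that this implies $L^2$-convergence.
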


\begin{proof}
This is a simple implication of the $\Gamma$-convergence and the following equi-mild coercivity of the discrete energies:
Theorem\,\ref{thm:energyConvergenceRate} shows that the minima of the discrete energies are uniformly bounded.
However, $\E^K$ is coercive with respect to $\x_A+W^{1,2}((0,1);\V)$ so that the minima for all $K$ are achieved in some bounded ball of $\x_A+W^{1,2}((0,1);\V)$,
which is compact in $L^2((0,1);\X)$.
Together with the $\Gamma$-convergence of $\E^K$,
this implies that any $L^2((0,1);\X)$-limit point of a sequence of minimizers $\hat\x^K$ for $\E^K$ is a minimizer for $\energy$.
Furthermore, from the above, all converging subsequences are bounded in $\x_A+W^{1,2}((0,1);\V)$
and thus bounded in $\x_A+C^{0,\frac12}([0,1];\V)$ and by Arzel\`a--Ascoli precompact in $C^0([0,1];\X)$
so that the convergence is not only in $L^2((0,1);\X)$, but even in $C^0([0,1];\X)$.
\end{proof}

The above convergence obviously does not only hold
for the piecewise geodesic interpolation $\hat\x^K$ of discrete geodesics $(\x_0,\ldots,\x_K)$,
but also for the piecewise linear interpolation which we shall call $\x_\tau$ (where $\tau=1/K$ stands for the discrete time step).

For stronger convergence estimates and for the convergence of discrete logarithm, exponential map, and parallel transport,
we require the following smoothness hypotheses:

\begin{enumerate}
\renewcommand{\theenumi}{(H\arabic{enumi})}
\renewcommand*{\labelenumi}{\theenumi}
\item The metric $g$ is $C^2(\X;\V'\otimes \V')$-smooth.\label{enm:smoothMetric}
\item The energy $\W$ is $C^4((\x_A+\V)\times(\x_A+\V);\R)$-smooth with bounded derivatives.\label{enm:smoothEnergy}
\end{enumerate}

The following theorem now states that the convergence in $C^0([0,1];\X)$ ensured by the above $\Gamma$-convergence result is actually much stronger with $L^2$-converging velocities.

\begin{theorem}[Path convergence in $W^{1,2}((0,1);\manifold)$]\label{thm:W12}
Under the hypotheses \ref{enm:smoothMetric} and \ref{enm:smoothEnergy},
if the interpolated geodesics $\x_\tau(\cdot)$ converge for $K\to \infty$ to the continuous one $\x(\cdot)$ in $L^2((0,1);\X)$,
then this convergence is even in $W^{1,2}((0,1);\manifold)$.
\end{theorem}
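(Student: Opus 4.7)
The plan is to extract weak $W^{1,2}((0,1);\V)$-convergence of $\x_\tau$ to $\x$, establish convergence of the path energies $\energy[\x_\tau]\to\energy[\x]$, and then combine these via a Hilbert-type expansion of $\int_0^1 g_{\x(t)}(\dot\x_\tau-\dot\x,\dot\x_\tau-\dot\x)\,\d t$ to obtain strong convergence of the velocities in $L^2((0,1);\V)$. Compared with the familiar ``weak plus norm implies strong'' argument, the extra subtlety here is that the bilinear form $g_\x$ depends on its base point, so the continuity modulus $\beta$ from \eqref{eq:beta} together with the equidistribution estimate of Theorem \ref{thm:equidistribution} will have to absorb this $\x$-dependence.

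First I would bound $\|\dot\x_\tau\|_{L^2((0,1);\V)}$. The velocity $\dot\x_\tau$ is piecewise constant, equal to $(\x_k-\x_{k-1})/\tau$ on $[(k-1)\tau,k\tau]$, so the coercivity in \eqref{eq:boundg} reduces the task to estimating $\tau^{-1}\sum_k g_{\x_{k-1}}(\x_k-\x_{k-1},\x_k-\x_{k-1})$. Hypothesis \ref{enm:smoothEnergy} and a Taylor expansion of $\W$ around $(\x_{k-1},\x_{k-1})$, together with the consistency identities of Lemma \ref{thm:consistent}, yield $\W[\x_{k-1},\x_k]=g_{\x_{k-1}}(\x_k-\x_{k-1},\x_k-\x_{k-1})+O(\|\x_k-\x_{k-1}\|_\V^3)$; using the equidistribution bound $\dist(\x_{k-1},\x_k)\le C\dist(\x_A,\x_B)/K$ to control the cubic remainder, one obtains $\|\dot\x_\tau\|_{L^2((0,1);\V)}^2\le c^{\ast-1}\E[(\x_0,\ldots,\x_K)]+O(1/K)$, uniformly bounded thanks to Theorem \ref{thm:energyConvergenceRate}. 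A weakly convergent subsequence in $W^{1,2}((0,1);\V)$ therefore exists, and the hypothesized $L^2((0,1);\X)$-convergence identifies its weak limit as $\x$.

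The same expansion, read in the opposite direction, also gives $\energy[\x_\tau]=\E[(\x_0,\ldots,\x_K)]+O(1/K)$: inside each subinterval one uses \eqref{eq:beta} and equidistribution to replace $g_{\x_\tau(t)}$ by $g_{\x_{k-1}}$ with a summable error of order $\beta(C/K)$, and then rewrites the sum in terms of $\W$ modulo the cubic remainders, so Theorem \ref{thm:energyConvergenceRate} yields $\energy[\x_\tau]\to\dist^2(\x_A,\x_B)=\energy[\x]$. I would then expand
\beq
0\le\int_0^1 g_{\x(t)}(\dot\x_\tau\!-\!\dot\x,\dot\x_\tau\!-\!\dot\x)\,\d t=\!\int_0^1\! g_{\x(t)}(\dot\x_\tau,\dot\x_\tau)\,\d t-2\!\int_0^1\! g_{\x(t)}(\dot\x_\tau,\dot\x)\,\d t+\energy[\x].
\eeq
The cross term tends to $\energy[\x]$ because $v\mapsto\int_0^1 g_{\x(t)}(v(t),\dot\x(t))\,\d t$ defines a bounded linear functional on $L^2((0,1);\V)$ against which $\dot\x_\tau\rightharpoonup\dot\x$, while in the first term $g_{\x(t)}$ may be swapped for $g_{\x_\tau(t)}$ with vanishing error using \eqref{eq:beta} and the $C^0([0,1];\X)$-convergence of Corollary \ref{thm:Linfty}, so that its limit is $\energy[\x]$ as well. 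Coercivity in \eqref{eq:boundg} then forces $\dot\x_\tau\to\dot\x$ strongly in $L^2((0,1);\V)$, and uniqueness of the limit upgrades the subsequential to the full-sequence statement. I expect the main obstacle to be the uniform control of the $\beta$- and cubic remainders when comparing $\energy[\x_\tau]$ with $\E[(\x_0,\ldots,\x_K)]$, where Theorem \ref{thm:equidistribution} is indispensable.
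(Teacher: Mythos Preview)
Your argument is correct and takes a genuinely different route from the paper. The paper does not use the ``weak convergence plus convergence of norms implies strong convergence'' trick; instead it writes down the Euler--Lagrange equations \eqref{eq:contEL} and \eqref{eq:discreteEL} for the continuous and discrete geodesics, derives a nine-term identity \eqref{eq:I-IX} for $\int_0^1 2g_{\x(t)}(\dot\x_\tau-\dot\x,\dot\phi)\,\d t$ with test function $\phi=\x_\tau-\x$ and auxiliary $\psi_\tau=\x_\tau-\Itau\x$, and then bounds each of the nine terms individually to reach an inequality of the form $\|\dot\x-\dot\x_\tau\|_{L^2}^2\le C(\tau^2+\|\x-\Itau\x\|_{W^{1,2}}^2+\|\x-\x_\tau\|_{L^2((0,1);\X)}^2)$.

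Your route is conceptually simpler for this particular theorem: once you have (i) the uniform $W^{1,2}$ bound via equidistribution and coercivity, (ii) $\energy[\x_\tau]\to\energy[\x]$ from Theorem \ref{thm:energyConvergenceRate} and the $\beta$-modulus, and (iii) weak $L^2((0,1);\V)$-convergence of $\dot\x_\tau$ to $\dot\x$, the polarization identity with base-point correction finishes the job without ever touching the Euler--Lagrange equations. The paper's approach, however, buys something yours does not: the identity \eqref{eq:I-IX} is set up for arbitrary test functions $\phi,\psi_\tau$ and is \emph{reused} in the proof of Theorem \ref{thm:Log} on the discrete logarithm, where $\phi$ is taken to be a Green-type function $\Phi\mathfrak v$. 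So the extra machinery in the paper is an investment that pays off in the next section, whereas your softer argument would not directly yield the pointwise $L^\infty$-type estimate needed there.

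One minor remark: you invoke Corollary \ref{thm:Linfty} for $C^0([0,1];\X)$-convergence, which is legitimate since $\x_\tau$ are minimizers, but you could equally well derive it from your own uniform $W^{1,2}((0,1);\V)$ bound via $C^{0,1/2}([0,1];\V)\hookrightarrow\hookrightarrow C^0([0,1];\X)$ and the assumed $L^2$-convergence, keeping the argument self-contained.
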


\begin{proof}
Under the smoothness and coercivity assumptions on $g$, a continuous
geodesic path $(\x(t))_{t \in [0,1]}$ in $\manifold$
as a minimizer of the continuous energy $\int_0^1 g_{\x(t)}(\dot \x(t),\dot \x(t)) \d t$ for given fixed end points $\x(0)$ and $\x(1)$
is smooth and fulfills the Euler--Lagrange equation
\beqn
\int_0^1 2 g_{\x(t)}(\dot \x(t),\dot \psi(t)) +
\left(D_\x g_{\x(t)}(\psi(t))\right) (\dot \x(t),\dot \x(t)) \d t = 0
\label{eq:contEL}
\eeqn
for all $\psi \in W^{1,2}_0((0,1);\V) := \setof{\psi \in W^{1,2}((0,1);\V)}{\psi(0) = \psi(1) = 0}$.
Furthermore, the Euler--Lagrange equation for a discrete geodesic path is given by
\beqn\textstyle
K \sum_{k=1}^{K} \left( \W_{,1}[\x_{k-1},\x_k] (\psi_{k-1}) + \W_{,2}[\x_{k-1},\x_k] (\psi_{k}) \right)
=0
\label{eq:discreteEL}
\eeqn
for all $(\psi_{k})_{k=0,\ldots, K} \subset \V$ with $\psi(0) = \psi(K) = 0$.
Applying the Taylor expansion
\begin{multline}
\W[\x_{k-1},\x_k] = \W[\x_{k-1},\x_{k-1}] +
\W_{,2}[\x_{k-1},\x_{k-1}] (\x_k-\x_{k-1}) \\
+ \int_0^1 (1-s) \W_{,22}[\x_{k-1},\x_{k-1}+ s(\x_k-\x_{k-1})]
(\x_k-\x_{k-1},\x_k-\x_{k-1}) \d s\,,\label{eqn:TaylorExpansion}
\end{multline}
whose first two terms on the right-hand side vanish,
one can rewrite \eqref{eq:discreteEL} as
\beqa
0&=& \frac1{\tau} \sum_{k=1}^{K}  \int_0^1 (1-s)\Big(
W_{,221}[\x_{k-1},\x_{k-1} \!+\! s (\x_k\!-\!\x_{k-1})]((\x_k\!-\!\x_{k-1}),(\x_k\!-\!\x_{k-1}),\psi_{k-1}) \\[-1ex]
&&\qquad  \qquad \quad \;\;  + W_{,222}[\x_{k-1},\x_{k-1} \!+\! s (\x_k\!-\!\x_{k-1})]\\
&& \qquad  \qquad \qquad \qquad \quad
((\x_k\!-\!\x_{k-1}),(\x_k\!-\!\x_{k-1}),\psi_{k-1} + s(\psi_{k}\!-\!\psi_{k-1})) \\
&&\qquad  \qquad \quad \;\;  + 2\, W_{,22}[\x_{k-1},\x_{k-1} \!+\! s (\x_k\!-\!\x_{k-1})]((\x_k\!-\!\x_{k-1}),(\psi_k\!-\!\psi_{k-1}))
\Big)\, \d s \\
&=& \sum_{k=1}^{K}  \int_0^\tau (1-t/\tau)\Big(
W_{,221}[\x_{k-1},\x_{k-1} \!+\! t v_k](v_k,v_k,\psi_{k-1}) \\[-1ex]
&&\qquad \qquad \quad + W_{,222}[\x_{k-1},\x_{k-1} \!+\! t v_k](v_k,v_k,\psi_{k-1} + t w_k) \\
&&\qquad \qquad \quad + 2\, W_{,22}[\x_{k-1},\x_{k-1} \!+\! t v_k](v_k,w_k)
\Big)\, \d t
\eeqa
for $v_k = \frac{\x_k\!-\!\x_{k-1}}{\tau}$ and $w_k = \frac{\psi_k\!-\!\psi_{k-1}}{\tau}$.
Now, taking into account the smoothness of the geodesic, the smoothness of
$\W$, and $g_\x(v,w) = \frac12 \W_{,22}[\x,\x](v,w)$,
which implies
\begin{equation}\label{eqn:metricDeriv}
2(D_\x g_\x(\psi))(v,w) = \left( \W_{,221}[\x,\x](v,w,\psi) + \W_{,222}[\x,\x](v,w,\psi) \right)\,,
\end{equation}
we finally achieve
\beqan
0&=& \frac12 \sum_{k=1}^{K}  \tau \Big(
W_{,221}[\x_{k-1},\x_{k-1}](v_k,v_k,\psi_{k-1}) + W_{,222}[\x_{k-1},\x_{k-1}](v_k,v_k,\psi_{k-1}) \nonumber\\[-2ex]
&&\qquad \qquad + 2\, W_{,22}[\x_{k-1},\x_{k-1}](v_k,w_k)
\Big)\, + \mathrm{Err}  \nonumber\\[-1ex]
&=& \sum_{k=1}^{K}  \tau \Big( 2 g_{\x_{k-1}}(v_k,w_k) + \left( (D_\x g_{\x_{k-1}})(\psi_{k-1})\Big)(v_k,v_k) \right) + \mathrm{Err}
 \label{eq:discreteEL2}
\eeqan
for $\mathrm{Err} = O(\tau^2\sum_k\|v_k\|_\V^3\|\psi_{k-1}\|_\V) + O(\tau^2\sum_k\|v_k\|_\V^2\|w_k\|_\V)$.
From Theorem\,\ref{thm:equidistribution} we obtain the following estimates for the error,
\begin{align}
\mathrm{Err}&=\textstyle O(\tau^2)\left(\sum_{k=1}^K\|\psi_{k-1}\|_\V+\sum_{k=1}^K\|w_k\|_\V\right)\,,\label{eq:errLog}\\
\mathrm{Err}&=
\textstyle O(\tau)\left(\sqrt{\tau\sum_{k=1}^K \|\psi_{k-1}\|^2_\V}+\sqrt{\tau\sum_{k=1}^K \|w_k\|^2_\V}\right)\,.\label{eq:errH1}
\end{align}
Based on these preliminaries we now combine the Euler--Lagrange equation \eqref{eq:contEL} for the continuous geodesic path and \eqref{eq:discreteEL2} (derived from the Euler--Lagrange equation \eqref{eq:discreteEL} for the discrete geodesic path) to obtain an equation for the discretization error.
To this end, we consider a piecewise polygonal function $\psi_\tau: [0,1] \to\V$ with $\psi_\tau(k\tau) = \psi_k \in\V$ for $k=0,\ldots, K$ and $\psi_0 = \psi_K = 0$.  Using this notation one easily verifies the identity
\beqan
&&\textstyle \int_0^1
2 g_{\x(t)}(\dot \x_\tau(t)\!-\!\dot \x(t),\dot \phi(t)) \d t \nonumber\\
&&\textstyle = \int_0^1
2 g_{\x(t)}(\dot \x_\tau(t)\!-\!\dot \x(t),\dot \phi(t) \!-\! \dot \psi_\tau(t)) \d t \nonumber\\
&&\textstyle\quad  \!-\! \int_0^1
2 g_{\x(t)}(\dot \x(t),\dot \psi_\tau(t)) \!+\!
D_\x g_{\x(t)}(\psi_\tau(t))(\dot \x(t),\dot \x(t)) \d t \nonumber\\
&&\textstyle\quad \!+\! \sum_{k=1}^{K}  \tau \Big( 2 g_{\x_{k\!-\!1}}(v_k,w_k) \!+\! D_\x g_{\x_{k\!-\!1}}(\psi_{k\!-\!1}) (v_k,v_k) \Big) \nonumber\\
&&\textstyle\quad  \!+\! \sum_{k=1}^{K} 2 \left(  \int_{(k\!-\!1)\tau}^{k\tau}
 g_{\x_\tau(t)}(\dot \x_\tau(t),\dot \psi_\tau(t)) \d t \!-\!  \tau g_{\x_{k\!-\!1}}(v_k,w_k) \right) \nonumber\\
&&\textstyle\quad  \!+ 2\int_0^1 g_{\x(t)}(\dot \x_\tau(t),\dot \psi_\tau(t)) \!-\!
g_{\x_\tau(t)}(\dot \x_\tau(t),\dot \psi_\tau(t)) \d t \nonumber\\
&&\textstyle\quad \!+\! \sum_{k=1}^{K}  \left(\int_{(k\!-\!1)\tau}^{k\tau}
D_\x g_{\x_\tau(t)}(\psi_{k\!-\!1})(\dot \x_\tau(t),\dot \x_\tau(t)) \d t \!-\!
 \tau  D_\x g_{\x_{k\!-\!1}}(\psi_{k\!-\!1})(v_k,v_k)\right)\nonumber\\
&&\textstyle\quad \!+\! \sum_{k=1}^{K}  \left(\int_{(k\!-\!1)\tau}^{k\tau} D_\x g_{\x_\tau(t)}(\psi_\tau(t))(v_k,v_k)
- D_\x g_{\x_\tau(t)}(\psi_{k\!-\!1})(\dot \x_\tau(t),\dot \x_\tau(t)) \d t \right)\nonumber\\
&&\textstyle\quad  \!+\! \int_0^1
 \left(D_\x g_{\x(t)}(\psi_\tau(t))\!-\! D_\x g_{\x_\tau(t)}(\psi_\tau(t))\right)(\dot \x_\tau(t),\dot \x_\tau(t)) \d t \nonumber\\
&&\textstyle\quad  \!+\! \int_0^1\!\!
  D_\x g_{\x(t)}(\psi_\tau(t))(\dot \x_\tau(t),\dot \x(t)\!-\!\dot \x_\tau(t)) \!+\!  D_\x g_{\x(t)}(\psi_\tau(t))(\dot \x(t)\!-\!\dot \x_\tau(t),\dot \x(t)) \d t \nonumber \\
&&=:  I - II + III + IV + V + VI + VII + VIII + IX   \label{eq:I-IX}
\eeqan
with $v(t) = \dot \x(t)$, $w(t)=\dot\psi_\tau(t)$.
Now, we choose $\phi=\x_\tau - \x$
and $\psi_\tau = \x_\tau - \mathcal{I}_\tau \x$, where $\mathcal{I}_\tau$ is the piecewise affine Lagrangian interpolation in time with
$\mathcal{I}_\tau \psi(k\tau) = \psi(k\tau)$. Let us mention here, that we will reuse \eqref{eq:I-IX} with different test functions later in the context of a pointwise error estimate in the proof of Theorem\,\ref{thm:Log}.
Due to the uniform coercivity of the metric $g$ the left-hand side can be estimated from below by
$2 c^\ast \|\dot \x - \dot\x_\tau\|_{L^2((0,1);\V)}^2$.
The different terms on the right-hand side of \eqref{eq:I-IX} are estimated as follows:
\begin{itemize}
\setlength{\itemsep}{1ex}
\item[(I)] By the uniform boundedness of the metric, $|\,I\,|$ can be estimated by\\
$2 C^\ast \|\dot \x - \dot \x_\tau\|_{L^2((0,1);\V)}  \|\dot \x - (\Itau\x)\dot\ \|_{L^2((0,1);\V)}$.
\item[(II)] Due to \eqref{eq:contEL} the term $II$ vanishes.
\item[(III)] Taking into account \eqref{eq:discreteEL2} and \eqref{eq:errH1},
$|\,III\,|$ is bounded by $C\tau\|\psi_\tau\|_{W^{1,2}((0,1);\V)}$. 
\item[(IV)] Due to the smoothness of the metric $g$ and Theorem\,\ref{thm:equidistribution}, 
the quadrature error in $|\,IV\,|$ is bounded by 
$O(\tau^2\sum_k\|v_k\|_\V^2\|\dot\psi_\tau\|_\V)=O(\tau\|\dot \psi_\tau\|_{L^2((0,1);\V)})$ (compare \eqref{eq:errH1}).
\item[(V)] By an analogous argument and the boundedness of  $\x_\tau$ in $L^2((0,1);\X)$ the term $|\,V\,|$ can be estimated by
$C \| \x - \x_\tau\|_{L^2((0,1);\X)} \|\dot \x_\tau\|_{L^\infty((0,1);\V)}\|\dot \psi_\tau\|_{L^2((0,1);\V)}$, where we know from Theorem
\ref{thm:equidistribution} that $\|\dot \x_\tau\|_{L^\infty((0,1);\V)}=O(1)$. 
\item[(VI)] Now using the smoothness of $D_\x g$ we can bound $|\,VI\,|$ by \\ 
$C\tau^2\sum_{k=1}^K\|v_k\|_\V^2\|v_k\|_\X\|\psi_{k-1}\|_\X=O(\tau\|\psi_\tau\|_{L^2((0,1);\X)})$.
\item[(VII)] Term $|\,VII\,|$ is bounded by $C \tau \|\dot \psi_\tau\|_{L^2((0,1);\X)}$ due to the uniform bound for $\|\dot\x_\tau\|_\V$.
\item[(VIII)] Now using the smoothness of $D_\x g$ and the boundedness of $\|\dot\x_\tau\|_\V$ we can bound 
$|\,VIII\,|$ by $C\|\x-\x_\tau\|_{L^2((0,1);\X)}\|\psi_\tau\|_{L^2((0,1);\X)}$.
\item[(IX)] Finally, we obtain the estimate
$|\,IX\,| \leq C\|\psi_\tau\|_{L^2((0,1);\X)} \|\dot \x - \dot \x_\tau\|_{L^2((0,1);\V)}$. 
\end{itemize}

\noindent
Altogether, for some constant $C$ we obtain the following estimate for the error,
\beqa
2c^\ast\|\dot \x - \dot \x_\tau\|_{L^2((0,1);\V)}^2
&\leq& 2 C^\ast \|\dot \x - \dot \x_\tau\|_{L^2((0,1);\V)}  \|\dot \x - (\Itau\x)\dot\ \|_{L^2((0,1);\V)}\\
&&+C\tau\|\psi_\tau\|_{W^{1,2}((0,1);\V)}\\
&&+C \| \x - \x_\tau\|_{L^2((0,1);\X)}\|\dot \psi_\tau\|_{L^2((0,1);\V)}\\
&&+C\tau\|\psi_\tau\|_{W^{1,2}((0,1);\X)}\\
&&+C\|\x-\x_\tau\|_{L^2((0,1);\X)}\|\psi_\tau\|_{L^2((0,1);\X)}\\
&&+C \|\psi_\tau\|_{L^2((0,1);\X)} \|\dot \x - \dot \x_\tau\|_{L^2((0,1);\V)}\,.
\eeqa
Finally, using Poincar\'e's inequality $\|\x-\x_\tau\|_{L^2((0,1);\V)}\leq\|\dot\x-\dot\x_\tau\|_{L^2((0,1);\V)}$
as well as Young's inequality and the compact embedding $\V\hookrightarrow \X$, we derive in a straightforward way
\beqa
\|\dot\x-\dot\x_\tau\|_{L^2((0,1);\V)}^2&\leq&C\left(\tau^2+\|\x-\Itau\x\|_{W^{1,2}((0,1);\V)}^2+\|\x-\x_\tau\|_{L^2((0,1);\X)}^2\right)
\eeqa
for some $C>0$.
Taking into account  the classical interpolation estimate
$$\| \x - \Itau \x\|_{L^2((0,1);\V)} + \tau \|\dot \x - (\Itau\x)\dot\ \|_{L^2((0,1);\V)} \leq C \tau^2$$
as well as $\|\x-\x_\tau\|_{L^2((0,1);\X)}\to0$, this concludes the proof.
\end{proof}

\begin{remark}
Note that in the continuous setting, geodesics can also be obtained by minimizing the path length \eqref{eq:contlength}
for fixed end points $\x(0),\x(1)$.
However, discrete minimizers of the discrete path energy \eqref{eq:discretePathEnergy}
are in general unrelated to continuous geodesics.
As an example, think of a smooth curved manifold embedded in $\R^m$
and let $\W[\x,\tilde\x]$ be the squared Euclidean distance between $\x$ and $\tilde\x$ in the embedding space.
Then given $\x_0,\x_K$ with $\x_0+ t (\x_1-\x_0) \not\in \manifold$ for all $t\in (0,1)$, minimizers of $\L[\cdot]$ for $\x_0,\x_K$ fixed
have the form $(\x_0,\x_0,\ldots,\x_0,\x_K,\ldots,\x_K)\,.$
Obviously, the claim of Theorem\,\ref{thm:equidistribution} no longer holds,
and the argument for Theorem\,\ref{thm:Gamma} breaks down
since minimizers of the discrete length $\L[\cdot]$ are only bounded in $BV((0,1);\manifold)$ instead of $W^{1,2}((0,1);\manifold)$.
\end{remark}

\section{Convergence of discrete logarithm, exponential, and parallel transport}
\label{sec:convergence}
In this section we discuss the limit behavior of the discrete operators.
At first we investigate the convergence of the discrete logarithm, which can be formulated as an
$L^\infty$ derivative error estimate for discrete variational solutions of an elliptic problem in
$W^{1,2}((0,1);\manifold)$. This will become apparent in the proof of the following theorem.

\begin{theorem}[Convergence of discrete logarithm]\label{thm:Log}
Let $\x,\tilde\x\in\manifold$.
Under the hypotheses \ref{enm:smoothMetric} and \ref{enm:smoothEnergy},
if the continuous and discrete geodesics between $\x,\tilde\x$ are unique,
$K \Log{K}_\x \tilde \x \rightarrow \log_\x \tilde \x$
weakly in $\V$ (and thus strongly in $\X$) as $K\to \infty$.
\end{theorem}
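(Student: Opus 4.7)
The plan is to first observe that the discrete logarithm is really a pointwise value of a discrete velocity field. Writing $\x_\tau$ for the piecewise linear interpolation of the discrete geodesic $(\x_0,\ldots,\x_K)$, I have $K\,\Log{K}_\x(\tilde\x) = K(\x_1-\x_0) = \dot\x_\tau(0^+)$, so the theorem reduces to a pointwise weak-$\V$ convergence statement for the piecewise constant velocity $\dot\x_\tau$ at $t=0$.

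The first step is to establish \emph{boundedness}. By Theorem \ref{thm:equidistribution} together with the equivalence $\sqrt{c^\ast}\|\cdot\|_\V \leq \dist \leq \sqrt{C^\ast}\|\cdot\|_\V$ noted in Section \ref{sec:gamma}, one has $\|K(\x_1-\x_0)\|_\V \leq C\,\dist(\x,\tilde\x)$ uniformly in $K$. Since $\V$ is reflexive, any subsequence admits a further subsequence weakly converging in $\V$ to some $\zeta$, and by the compact embedding $\V\hookrightarrow\hookrightarrow\X$ this convergence is strong in $\X$.

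The second, most delicate step is the \emph{identification} $\zeta=\dot\x(0)=\log_\x\tilde\x$. Here I reuse the algebraic identity \eqref{eq:I-IX} from the proof of Theorem \ref{thm:W12}, but now with test functions designed to localize at $t=0$. For arbitrary $\eta\in\V$ I take $\phi=\psi_\tau$ to be the continuous, piecewise affine tent function equal to $\eta$ at $t=\tau$, supported on $[0,1]$, that is linear of slope $\eta/\tau$ on $[0,\tau]$ and of slope $-\eta/(1-\tau)$ on $[\tau,1]$; this $\psi_\tau$ is piecewise polygonal on the time grid and vanishes at $t=0,1$, hence is admissible in both the continuous and discrete Euler--Lagrange equations. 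On the left-hand side of \eqref{eq:I-IX}, the part over $[0,\tau]$ equals $(2/\tau)\int_0^\tau g_{\x(t)}(\dot\x_\tau(t)-\dot\x(t),\eta)\,\d t$, which by continuity of $\x, \dot\x$, and $g$ (in the first argument) together with the weak convergence $v_1^K\rightharpoonup\zeta$ tends to $2\,g_{\x(0)}(\zeta-\dot\x(0),\eta)$; the part over $[\tau,1]$ equals $-2/(1-\tau)\int_\tau^1 g_{\x(t)}(\dot\x_\tau(t)-\dot\x(t),\eta)\,\d t$ and vanishes in the limit thanks to the $L^2((0,1);\V)$-convergence $\dot\x_\tau\to\dot\x$ from Theorem \ref{thm:W12}. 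On the right-hand side, term I drops because $\phi=\psi_\tau$, term II drops by \eqref{eq:contEL}, term III is bounded by the sharper pointwise error estimate \eqref{eq:errLog} which for our $\psi_\tau$ yields an $O(\tau)\|\eta\|_\V$ contribution, and the quadrature/consistency terms IV--IX are dominated by replacing $L^2$-in-time by $L^1$- or $L^\infty$-in-time bounds on $\dot\psi_\tau$, combined with $\|\dot\x_\tau\|_{L^\infty((0,1);\V)}=O(1)$ from Theorem \ref{thm:equidistribution}, the $C^0([0,1];\X)$-convergence from Corollary \ref{thm:Linfty}, and the smoothness hypotheses \ref{enm:smoothMetric}, \ref{enm:smoothEnergy}. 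Passing to the limit one obtains $g_{\x(0)}(\zeta-\dot\x(0),\eta)=0$ for every $\eta\in\V$, and coercivity \eqref{eq:boundg} forces $\zeta=\dot\x(0)$. The usual subsequence argument, combined with uniqueness of the continuous geodesic, promotes subsequential weak convergence to weak convergence of the full sequence, and compactness gives the strong $\X$-convergence.

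The main obstacle I expect is the bookkeeping in this identification step: because $\dot\psi_\tau$ has size $1/\tau$ on $[0,\tau]$, the $L^2((0,1);\V)$-estimates employed in the proof of Theorem \ref{thm:W12} are no longer sufficient, and one must systematically switch to $L^\infty$-in-time or $L^1$-in-time bounds on $\dot\psi_\tau$, together with the element-wise remainder \eqref{eq:errLog} in place of \eqref{eq:errH1}. The availability of both \eqref{eq:errLog} and \eqref{eq:errH1} as distinct estimates in Section \ref{sec:gamma} is precisely what makes the present localization argument close.
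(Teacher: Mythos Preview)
Your argument is correct and constitutes a genuine simplification of the paper's proof. The paper instead introduces, for each $\v\in\V'$, a Green's function $\Phi\in W^{1,2}_0((0,1);\mathrm{Hom}(\V',\V))$ solving the adjoint problem $\int_0^1 2g_{\x(t)}(\dot z,\dot\Phi\v)\,\d t=(\v,z(\tau)/\tau)_{\V',\V}$, which yields the exact error representation $(\v,K\Log{K}_\x\tilde\x-\log_\x\tilde\x)_{\V',\V}=\int_0^1 2g_{\x(t)}(\dot\x_\tau-\dot\x,\dot\Phi\v)\,\d t+O(\tau\|\v\|_{\V'})$. The paper then computes $\Phi$ explicitly (it is piecewise of the form $\LMg_{\x(t)}^{-1}A$ on $(0,\tau)$ and $\LMg_{\x(t)}^{-1}B$ on $(\tau,1)$ for certain operators $A,B$) and applies \eqref{eq:I-IX} with $\phi=\Phi\v$, $\psi_\tau=\Itau\Phi\v$, estimating I--IX just as you indicate via \eqref{eq:errLog} and $L^1/L^\infty$-in-time bounds. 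In the constant-metric case the paper's $\Phi\v$ is precisely your tent function (up to the Riesz isomorphism), so your choice is the natural ``frozen-coefficient'' version of the paper's test function. What the paper's route buys is a direct bound $(\v,K\Log{K}_\x\tilde\x-\log_\x\tilde\x)_{\V',\V}\leq C(\tau+\|\dot\x-\dot\x_\tau\|_{L^2((0,1);\V)}+\|\x-\x_\tau\|_{L^\infty((0,1);\X)})\|\v\|_{\V'}$, uniform in $\v$, which avoids the subsequence extraction altogether. What your route buys is that it dispenses with the explicit Green's function construction and the associated operator algebra; the only place the variable metric intervenes is absorbed into the $\V'$-strong convergence $L_K\to g_{\x(0)}(\cdot,\eta)$ of the averaged functionals, which you handle correctly via \ref{enm:smoothMetric} and the uniform bound on $\|v_1^K\|_\V$.
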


\begin{proof}
\renewcommand{\v}{{\mathfrak{v}}}
As usual, abbreviate $\tau=1/K$, and let $\x(t)_{t\in[0,1]}$ and $\x_\tau(t)_{t\in[0,1]}$ be the continuous and the interpolated discrete geodesic between $\x$ and $\tilde\x$.
Let us denote by $\Phi\in W^{1,2}_0((0,1);\mathrm{Hom}(\V',\V))$ the weak solution of
\beqn\label{eq:Green}
\int_0^1
2 g_{\x(t)}(\dot z(t),\dot \Phi(t)\v) \d t = \left(\v,\tfrac{z(\tau)}{\tau}\right)_{\V',\V}
\eeqn
for all $z \in W^{1,2}_0((0,1);\V)$, $\v\in\V'$. Then we obtain
\begin{align}
\!\!\!\left(\v,K\Log{K}_{\x}{\tilde \x} - \log_{\x}{\tilde \x}\right)_{\V',\V}
&= \left(\v,\tfrac{\x_\tau(\tau) - \x(0)}{\tau} - \dot \x(0)\right)_{\V',\V} \nonumber\\
&= \left(\v,\tfrac{\x_\tau(\tau)-\x(\tau)}{\tau} + \left(\tfrac{\x(\tau)-\x(0)}{\tau} - \dot \x(0)\right)\right)_{\V',\V} \nonumber\\
&= \!\int_0^1\!\!\!\!2 g_{\x(t)}(\dot \x_\tau(t)\!-\!\dot \x(t),\dot \Phi(t)\v) \d t + O(\tau\|\v\|_{\V'})\,.\label{eq:LogError}
\end{align}
The solution of \eqref{eq:Green} for a given continuous geodesic path $(\x(t))_{t\in [0,1]}$ can be computed explicitly.
Indeed, we deduce from  \eqref{eq:Green} the following two conditions,
\begin{eqnarray}
2 \frac{\d}{\d t} \left(\LMg_{\x(t)}(\dot \Phi(t)\v)\right )&=& 
0 \quad \mbox{on $(0,\tau) \cup (\tau,1)$}\label{eq:Green1}\,,\\
2\, \LMg_{\x(\tau)} \left(\dot \Phi(\tau+0)\v-\dot \Phi(\tau-0)\v\right) &=& 
-\tfrac{1}{\tau} \v \label{eq:Green2}
\end{eqnarray}
for any $\v\in\V'$, where we use the notation $\LMg_{\x}:\V\to\V'$ for
the inverse Riesz isomorphism of the Hilbert space $(\V,g_\x)$.
Integration of \eqref{eq:Green1} leads to $\dot\Phi(t) = \LMg_{\x(t)}^{-1} \LMg_{\x(0)} \dot\Phi(0)$ on $(0,\tau)$
and $\dot\Phi(t) = \LMg_{\x(t)}^{-1} \LMg_{\x(1)} \dot\Phi(1)$ on $(\tau,1)$.
From this and \eqref{eq:Green2} one obtains $\LMg_{\x(1)} \dot\Phi(1)-\LMg_{\x(0)} \dot\Phi(0) = -\frac1{2\tau}$.
Furthermore, due to the boundary conditions $\Phi(0) = \Phi(1) =0$ we achieve
\beqan\label{eq:Green3}
0 &=& \int_0^\tau \!\!\dot \Phi(t) \d t + \!\int_\tau^1 \!\! \dot \Phi(t) \d t 
= \left(\int_0^\tau \!\! \LMg_{\x(t)}^{-1} \d t\right)  \LMg_{\x(0)} \dot\Phi(0)
\!+\! \left(\int_\tau^1 \!\! \LMg_{\x(t)}^{-1} \d t\right)  \LMg_{\x(1)} \dot\Phi(1)\,.
\eeqan
Now, with the notation $G^- = \int_0^\tau \LMg_{\x(t)}^{-1} \d t$, $G^+ = \int_\tau^1 \LMg_{\x(t)}^{-1} \d t$, $A=\LMg_{\x(0)} \dot\Phi(0)$, and $B= \LMg_{\x(1)} \dot\Phi(1)$,
\eqref{eq:Green2} and \eqref{eq:Green3} form the linear system of equations
\beqa
A-\:\:\quad B &=& \tfrac{1}{2\tau} \\
G^-A+G^+B &=& 0
\eeqa
which for $\Id$ the identity has the unique solution 
\beqa
A=\frac{1}{2\tau} (G^-)^{-1} G^+(\Id+(G^-)^{-1} G^+)^{-1}\,,\quad
B=- \frac{1}{2\tau} (\Id+(G^-)^{-1} G^+)^{-1}\,.
\eeqa 
Finally, we can evaluate $\Phi(t)$ via integration of
\beqa
\dot \Phi(t) &=&  \frac{1}{2\tau}  \LMg_{\x(t)}^{-1} \! \left(\int_0^\tau \!\! \LMg_{\x(s)}^{-1} \d s\!\right)^{\!\!-1} \!\!\! \int_\tau^1 \!\!\LMg_{\x(s)}^{-1} \d s
\left(\Id\! +\! \left(\int_0^\tau \!\! \LMg_{\x(s)}^{-1} \d s\!\right)^{\!\!-1} \!\!\!\!\int_\tau^1 \!\!\LMg_{\x(s)}^{-1} \d s\! \right)^{\!\!-1}
\;\text{on }(0,\tau) \,,\\
\dot \Phi(t) &=& -  \frac{1}{2\tau}  \LMg_{\x(t)}^{-1} \! \left(\Id\! +\! \left(\int_0^\tau \!\! \LMg_{\x(s)}^{-1} \d s\!\right)^{\!\!-1}\!\!\!\! \int_\tau^1 \!\!\LMg_{\x(s)}^{-1} \d s \!\right)^{\!\!-1}
\;\text{on }(\tau,1) \,.
\eeqa
Let us remark that in the trivial case of the constant metric
$g_\x(v,v) = (v,v)_\V$ on a Hilbert space $\V$ (in which case geodesics are straight lines),
$\Phi$ is a piecewise affine function with $\Phi(0) =\Phi(1) =0$ and $\Phi(\tau)= \frac{1-\tau}2\Riesz$
for the Riesz isomorphism $\Riesz:\V'\to\V$.

Next, we define $\Psi_\tau$ as the Lagrangian interpolation $\Itau\Phi$ of $\Phi$ with $\Psi_\tau(k\tau) = \Phi(k\tau)$ for $k=0,\ldots, K$.
To show the claim, it remains to show the convergence of the right-hand side in \eqref{eq:LogError}.
At first we proceed as in the proof of Theorem\,\ref{thm:W12}.
In \eqref{eq:I-IX} we choose $\phi=\Phi\v$ and $\psi_\tau=\Psi_\tau\v$,
which turns the left-hand side into the desired error representation.
Hence, it remains to verify that the terms $I$ to $IX$ on the right-hand side of \eqref{eq:I-IX} vanish for $\tau \to 0$.
Indeed, using the same notation as in Theorem\,\ref{thm:W12} we obtain the following estimates:

\begin{itemize}
\setlength{\itemsep}{1ex}
\item[(I)] Due to the uniform boundedness of the metric, $|\,I\,|$ can be bounded by
$2 C^\ast \|\dot \x - \dot \x_\tau\|_{L^2((0,1);\V)} \|(\Phi-\Itau\Phi)\dot\ \v\|_{L^2((0,1);\V)}\,,$
and for $t\in((k-1)\tau,k\tau)$ one obtains\vspace{-1ex}
\begin{multline*}
\textstyle\|(\Phi-\Itau\Phi)\dot\ (t)\v\|_\V=\|\tfrac1\tau\int_{(k-1)\tau}^{k\tau}(\dot\Phi(t)-\dot\Phi(s))\v\,\d s\|_\V\\
\textstyle\leq\tfrac1\tau\int_{(k-1)\tau}^{k\tau}\|\LMg_{\x(t)}^{-1}-\LMg_{\x(s)}^{-1}\|\,\d s\,\|\LMg_{\x(\hat t)}\dot\Phi(\hat t)\v\|_\V
=O(\tau)\|\dot\Phi(\hat t)\v\|_\V
\end{multline*}
with $\hat t=0$ for $k=1$ and $\hat t=1$ else.
Using $\dot\Phi(0)=O(\tau^{-1})$ and $\dot\Phi(1)=O(1)$, we obtain $\|(\Phi-\Itau\Phi)\dot\ \v\|_{L^2((0,1);\V)}=O(\tau\|\v\|_{\V'})$.
\item[(II)] Due to \eqref{eq:contEL} the term $II$ vanishes as before.
\item[(III)] By \eqref{eq:discreteEL2} and this time
\eqref{eq:errLog} the term $|\,III\,|$ can be estimated by
$$C\tau\|\psi_\tau\|_{L^\infty((0,1);\V)}+C\tau^2\|w_1\|_\V+C\tau\sup_{k>1}\|w_k\|_\V\,,$$
where $\|w_1\|_\V \leq C \tau^{-1}\|\v\|_{\V'}$ and $\sup_{k>1}\|w_k\|_\V=O(\|\v\|_{\V'})$.\\
Using $\|\psi_\tau\|_{L^\infty((0,1);\V)}=O(1)$, $|\,III\,|$ is bounded by $C \tau\|\v\|_{\V'}$.
\item[(IV)] Due to the smoothness of the metric $g$, the quadrature error in $IV$ is bounded by\vspace{-1ex}
$$O(\tau^2\sum_k\|v_k\|_\X\|v_k\|_\V\|\dot\psi_\tau\|_\V)=O(\tau^2\|w_1\|_\V+\tau\sup_{k>1}\|w_k\|_\V)=O(\tau\|\v\|_{\V'})\,.$$ 
\vspace{-4ex}

\item[(V)] Using Theorem\,\ref{thm:equidistribution},
one obtains\vspace{-1ex}
\beqa
|\,V\,| &\leq&\textstyle C\int_0^1\|\x-\x_\tau\|_\X\|\dot\psi_\tau\|_\V\|\dot\x_\tau\|_\V\,\d t \\
&\leq& C\|\x-\x_\tau\|_{L^\infty((0,1);\X)}\|\dot\psi_\tau\|_{L^1((0,1);\V)}\|\dot\x_\tau\|_{L^\infty((0,1);\V)} \\
&=& O(\|\x-\x_\tau\|_{L^\infty((0,1);\X)}\|\v\|_{\V'})\,.
\eeqa
\item[(VI)] By the smoothness of $g$ we get
$|\,VI\,| \leq C\tau^2\!\sum_k\|\psi_{k\!-\!1}\|_\X\|v_k\|_\V^2\|v_k\|_\X=O(\tau\|\v\|_{\V'})\,.$
\item[(VII)] Again due to the smoothness of $g$ we achieve\vspace{-1ex}
$$|\,VII\,| \leq C\tau^2\|w_1\|_\X\|v_1\|_\V^2 + C\sum_{k>1}\tau^2\|w_k\|_\X\|v_k\|^2_\V \leq C \tau\|\v\|_{\V'}\,.$$
\vspace{-4ex}

\item[(VIII)] For the term $VIII$ one obtains the estimate\vspace{-1ex}
\beqa |\,VIII\,| &\leq& C\|\x-\x_\tau\|_{L^2((0,1);\X)}\|\psi_\tau\|_{L^2((0,1);\X)}\|\dot\x_\tau\|^2_{L^\infty((0,1);\V)}\\
&=&O(\|\x-\x_\tau\|_{L^2((0,1);\X)}\|\v\|_{\V'})\,.
\eeqa
\item[(IX)] Finally, for the last term we get\vspace{-1ex}
\beqa 
|\,IX\,| &\leq& 
C\|\psi_\tau\|_{L^2((0,1);\X)}(\|\dot\x\|_{L^\infty((0,1);\V)}+\|\dot\x_\tau\|_{L^\infty((0,1);\V)})
\|\dot\x-\dot\x_\tau\|_{L^2((0,1);\V)}\\
&=&O(\|\dot\x-\dot\x_\tau\|_{L^2((0,1);\V)}\|\v\|_{\V'})\,.
\eeqa
\end{itemize}
Collecting all these estimates we obtain
\beq
\left(\v,K\Log{K}_{\x}{\tilde \x} - \log_{\x}{\tilde \x}\right)_{\V',\V} 
\leq C\big(\tau+\|\dot\x-\dot\x_\tau\|_{L^2((0,1);\V)}
+\|\x-\x_\tau\|_{L^\infty((0,1);\X)}\big)\|\v\|_{\V'}\,.
\eeq
Using Theorems\,\ref{thm:Linfty} and \ref{thm:W12} the right hand side of this estimate converges to $0$ as $K\to\infty$ for all $\v\in \V'$, which proves the claim.
\end{proof}

Next we consider the existence and convergence of the discrete exponential.
This requires several preparations.

\begin{lemma}[Local uniqueness of $\Log{2}$]\label{thm:uniquenessLog2}
Under the hypotheses \ref{enm:smoothMetric} and \ref{enm:smoothEnergy}, there exists an $\epsilon>0$
such that for any $\x_0,\x_2\in\manifold$ with $\x_2\in B_\epsilon(\x_0)=\{\x\;|\;\|\x-\x_0\|_\V\leq\epsilon\}$,
$\E[(\x_0,\cdot,\x_2)]:\manifold\to\R$ is strictly convex with bounded coercive Hessian
on $B_\epsilon(\x_0)$, and $\Log{2}_{\x_0}(\x_2)$ is unique.
\end{lemma}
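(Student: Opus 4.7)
The strategy is to reduce everything to a local analysis of the Hessian of $\x_1\mapsto\E[(\x_0,\x_1,\x_2)]$ around the trivial configuration $\x_1=\x_2=\x_0$, where Lemma \ref{thm:consistent} gives us explicit access to the second derivatives of $\W$ in terms of the metric $g$.

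First, since $K=2$ we have $\E[(\x_0,\x_1,\x_2)]=2(\W[\x_0,\x_1]+\W[\x_1,\x_2])$, so
\begin{equation*}
\Hess_{\x_1}\E[(\x_0,\x_1,\x_2)]=2\bigl(\W_{,22}[\x_0,\x_1]+\W_{,11}[\x_1,\x_2]\bigr).
\end{equation*}
At the diagonal $\x_1=\x_2=\x_0$, Lemma \ref{thm:consistent} yields $\W_{,22}[\x_0,\x_0]=\W_{,11}[\x_0,\x_0]=2g_{\x_0}$, hence the Hessian equals $8g_{\x_0}$, which by \eqref{eq:boundg} is uniformly $\V$-coercive with constant $8c^\ast$ and bounded by $8C^\ast$ in the operator norm from $\V$ to $\V'$.

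Next, by hypothesis \ref{enm:smoothEnergy} the map $(\x_1,\x_2)\mapsto \W_{,22}[\x_0,\x_1]+\W_{,11}[\x_1,\x_2]$ is $C^2$ into bilinear forms on $\V\times\V$, so by a continuity/perturbation argument there is an $\epsilon_0>0$ such that on $B_{\epsilon_0}(\x_0)\times B_{\epsilon_0}(\x_0)$ the Hessian remains coercive with constant at least $4c^\ast$ and bounded by $16C^\ast$. This directly gives the strict convexity of $\x_1\mapsto\E[(\x_0,\x_1,\x_2)]$ on $B_{\epsilon_0}(\x_0)$ with bounded coercive Hessian, for any $\x_2\in B_{\epsilon_0}(\x_0)$.

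Finally, to upgrade strict convexity on a ball to uniqueness of the unconstrained minimizer $\Log{2}_{\x_0}(\x_2)$, I need to confine all minimizers to $B_{\epsilon_0}(\x_0)$. This follows from Theorem \ref{thm:equidistribution}: any discrete geodesic $(\x_0,\x_1,\x_2)$ of order $2$ satisfies $\dist(\x_0,\x_1)\le\tfrac{C}{2}\dist(\x_0,\x_2)$, and by the equivalence $\sqrt{c^\ast}\|\cdot\|_\V\le\dist\le\sqrt{C^\ast}\|\cdot\|_\V$ this translates to $\|\x_1-\x_0\|_\V\le\tfrac{C}{2}\sqrt{C^\ast/c^\ast}\,\|\x_2-\x_0\|_\V$. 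Choosing $\epsilon\le\epsilon_0$ small enough that $\tfrac{C}{2}\sqrt{C^\ast/c^\ast}\,\epsilon\le\epsilon_0$ forces any minimizing $\x_1$ into $B_{\epsilon_0}(\x_0)$, where strict convexity yields uniqueness.

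The main delicate point is the perturbation step: one must verify that the higher-derivative terms in the Taylor expansion of $\W_{,22}$ and $\W_{,11}$ act boundedly from $\V\times\V$ to $\R$ and not merely from stronger norms, so that the perturbation estimate is controlled by $\|\x_i-\x_0\|_\V$ rather than something weaker. This is ensured by hypothesis \ref{enm:smoothEnergy}, which postulates $C^4$-smoothness of $\W$ on $(\x_A+\V)\times(\x_A+\V)$ with bounded derivatives, so derivatives of all orders up to four are uniformly continuous bilinear, trilinear, or quadrilinear forms on $\V$.
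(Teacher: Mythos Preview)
Your proposal is correct and follows essentially the same approach as the paper: compute the Hessian $2(\W_{,22}[\x_0,\x_1]+\W_{,11}[\x_1,\x_2])$, evaluate it at the diagonal as $8g_{\x_0}$ via Lemma~\ref{thm:consistent}, use smoothness of $\W$ to perturb, and then confine minimizers to the ball. The only cosmetic difference is that the paper invokes Theorem~\ref{thm:energyConvergenceRate} (rather than Theorem~\ref{thm:equidistribution}) to argue that the optimal $\x_1$ cannot lie outside $B_\epsilon(\x_0)$; since the latter is derived from the former, this is the same argument.
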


\begin{proof}
For given $\epsilon$, consider $\x_0,\x_2\in\manifold$ with $\|\x_2-\x_0\|_\V<\epsilon$.
The Hessian of $\E[(\x_0,\cdot,\x_2)]$ at some $\x_1$ in the ball $B_\epsilon(\x_0)$ is given
by $2(\W_{,22}[\x_0,\x_1]+\W_{,11}[\x_1,\x_2])=8g_{\x_0}+O(\epsilon)$ with uniform constants,
where we have used the smoothness of $\W$ and Lemma\,\ref{thm:consistent}.
For $\epsilon$ small enough, this is bounded and coercive (independent of $\x_0$), and thus $\E[(\x_0,\cdot,\x_2)]$ is strictly convex (cf. Theorem \ref{thm:uniqueness}).
This also implies uniqueness of $\Log{2}_{\x_0}(\x_2)$
(via Theorem\,\ref{thm:energyConvergenceRate} it is easy to see that the optimal $\x_1$ cannot lie outside $B_\epsilon(\x_0)$).
\end{proof}

\begin{remark}
The above result can be generalized to $K$-geodesics inside an $\epsilon$-ball.
It is a refinement of Theorem\,\ref{thm:uniqueness} in the sense that it provides a uniform uniqueness radius $\epsilon$.
Also, it implies the differentiability of $\Log{2}_{\x_0}(\x_2)$ with respect to $\x_2\in B_\epsilon(\x_0)$:
The Hessian $D^2\E[(\x_0,\cdot,\x_2)]=2(\W_{,22}[\x_0,\cdot]+\W_{,11}[\cdot,\x_2])$ is coercive so that by the implicit function theorem,
\vspace{-2ex}
\begin{align*}
D\Log{2}_{\x_0}(\x_2)
&=-\big(\W_{,22}[\x_0,\x_1]+\W_{,11}[\x_1,\x_2]\big)^{-1}\W_{,12}[\x_1,\x_2]\\
&=[4g_{\x_0}+O(\epsilon)]^{-1}(2g_{\x_0}+O(\epsilon))=\tfrac12\Id+O(\epsilon)
\end{align*}
for $\x_1:=\x_0+\Log{2}_{\x_0}(\x_2)$.
\end{remark}

\begin{lemma}\label{lem:Log2}
For $\x_2\!\in\!B_\epsilon(\x_0)$ the estimate $\x_0\!+\!\Log{2}_{\x_0}(\x_2)\!=\!\tfrac{\x_{0\!}+\x_2}2\!+\!O(\epsilon^{3/2})$ holds.
\end{lemma}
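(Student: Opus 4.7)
The plan is to exploit the Euler--Lagrange equation characterising $\x_1:=\x_0+\Log{2}_{\x_0}(\x_2)$, namely $\W_{,2}[\x_0,\x_1]+\W_{,1}[\x_1,\x_2]=0$ (which holds in $\V'$ by Lemma\,\ref{thm:uniquenessLog2}), and reduce it to a linear equation for $v_1:=\x_1-\x_0$ by Taylor-expanding about the base point $(\x_0,\x_0)$. Writing $v_2:=\x_2-\x_0$, the first preparatory step is to establish the a priori bound $\|v_1\|_\V=O(\epsilon)$; this follows from applying Theorem\,\ref{thm:equidistribution} with $K=2$ (yielding $\dist(\x_0,\x_1)\leq C\dist(\x_0,\x_2)$) together with the norm equivalence $\sqrt{c^\ast}\|\cdot\|_\V\leq\dist(\x_0,\x_0+\cdot)\leq\sqrt{C^\ast}\|\cdot\|_\V$ coming from \eqref{eq:boundg}.

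Using hypothesis \ref{enm:smoothEnergy}, one can Taylor-expand both partial derivatives of $\W$ to first order around $(\x_0,\x_0)$ with remainder terms quadratic in $\epsilon$ measured in $\V'$-norm; combining with the consistency identities from Lemma\,\ref{thm:consistent} (namely $\W_{,1}[\x_0,\x_0]=\W_{,2}[\x_0,\x_0]=0$, $\W_{,11}[\x_0,\x_0]=\W_{,22}[\x_0,\x_0]=2g_{\x_0}$, and $\W_{,12}[\x_0,\x_0]=\W_{,21}[\x_0,\x_0]=-2g_{\x_0}$) this yields
\[
\W_{,2}[\x_0,\x_0+v_1]=2g_{\x_0}(v_1,\cdot)+O(\epsilon^2),\quad \W_{,1}[\x_0+v_1,\x_0+v_2]=2g_{\x_0}(v_1,\cdot)-2g_{\x_0}(v_2,\cdot)+O(\epsilon^2).
\]
Substituting into the Euler--Lagrange equation produces $4g_{\x_0}(v_1-\tfrac12v_2,\cdot)=O(\epsilon^2)$ in $\V'$. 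Inverting via the $\V$-coercivity of $g_{\x_0}$ from \eqref{eq:boundg} gives $\|v_1-\tfrac12v_2\|_\V=O(\epsilon^2)$, hence $\x_0+\Log{2}_{\x_0}(\x_2)=\tfrac12(\x_0+\x_2)+O(\epsilon^2)$, which is strictly stronger than the claimed $O(\epsilon^{3/2})$ bound.

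The main obstacle is bookkeeping the Taylor remainders correctly: they involve pairings of the third derivatives of $\W$ against two $O(\epsilon)$ displacement vectors and an arbitrary test direction from $\V$, and one must use hypothesis \ref{enm:smoothEnergy} to control these pairings uniformly as functionals in $\V'$. A secondary subtlety is avoiding circularity: the a priori bound $\|v_1\|_\V=O(\epsilon)$ has to be secured from the variational characterization of $\x_1$ via Theorem\,\ref{thm:equidistribution} \emph{before} the expansion argument can be deployed, since the whole linearisation scheme is driven by smallness of $v_1$.
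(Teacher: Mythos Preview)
Your argument is correct and in fact yields the sharper bound $O(\epsilon^2)$ rather than the stated $O(\epsilon^{3/2})$; the a~priori estimate $\|v_1\|_\V=O(\epsilon)$ via Theorem\,\ref{thm:equidistribution}, the Taylor expansion of the Euler--Lagrange equation using the consistency identities of Lemma\,\ref{thm:consistent}, and the final coercivity step are all sound.

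The paper, however, proceeds quite differently. Rather than linearising the first-order optimality condition, it works at the level of the \emph{energy}: one expands $2g_{\x_0}\bigl(\tfrac{\x_0+\x_2}2-\x_1,\tfrac{\x_0+\x_2}2-\x_1\bigr)$ via the parallelogram-type identity into $g_{\x_0}(\x_0-\x_1,\x_0-\x_1)+g_{\x_0}(\x_2-\x_1,\x_2-\x_1)-\tfrac12g_{\x_0}(\x_2-\x_0,\x_2-\x_0)$, replaces each quadratic term by $\W$ up to $O(\epsilon^3)$, and then uses Theorem\,\ref{thm:energyConvergenceRate} (for $K=2$) to see that $\W[\x_0,\x_1]+\W[\x_1,\x_2]$ and $\tfrac12g_{\x_0}(\x_2-\x_0,\x_2-\x_0)$ agree up to $O(\epsilon^3)$. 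This yields $g_{\x_0}(\tfrac{\x_0+\x_2}2-\x_1,\tfrac{\x_0+\x_2}2-\x_1)=O(\epsilon^3)$ and hence only the $O(\epsilon^{3/2})$ bound after taking a square root. Your first-order approach recovers exactly what the paper later proves in Remark\,\ref{rem:ApprOrderExp2Log2}, where the estimate is improved to $O(\epsilon^2)$ by precisely the Euler--Lagrange expansion you describe. The trade-off is that the paper's energy argument needs only $\W=\dist^2+O(\dist^3)$ and Lipschitz dependence of $g_\x$ on $\x$, whereas your route explicitly uses the bounded third derivatives of $\W$ from hypothesis \ref{enm:smoothEnergy}; in return you gain half an order in $\epsilon$.
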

\begin{proof}
Indeed, using $\W[\x,\tilde\x]=\dist^2(\x,\tilde\x)+O(\dist^3(\x,\tilde\x))
=g_{\x_0}(\x-\tilde\x,\x-\tilde\x)+O(\epsilon^3)$ for $\x,\tilde\x\in B_\epsilon(\x_0)$ and
$2[\W[\x_0,\x_1(\x_2)]+\W[\x_1(\x_2),\x_2)]]=g_{\x_0}(\x_2-\x_0,\x_2-\x_0)+O(\epsilon^3)$
for $\x_1(\x_2)=\x_0+\Log{2}_{\x_0}(\x_2)$ (by Theorem\,\ref{thm:energyConvergenceRate}), we get\\[1ex]
$\hspace*{.5ex}2g_{\x_0}(\tfrac{\x_0+\x_2}2\!-\!\x_1(\x_2),\tfrac{\x_0+\x_2}2\!-\!\x_1(\x_2))$\\[1ex]
$\hspace*{2ex}=g_{\x_0}(\x_0\!-\!\x_1(\x_2),\x_0\!-\!\x_1(\x_2))
+g_{\x_0}(\x_2\!-\!\x_1(\x_2),\x_2\!-\!\x_1(\x_2))
\!-\!\tfrac12g_{\x_0}(\x_2\!-\!\x_0,\x_2\!-\!\x_0)$\\[1ex]
$\hspace*{2ex}=\W[\x_0,\x_1(\x_2)]\!+\!\W[\x_1(\x_2),\x_2]
-\left(\W[\x_0,\x_1(\x_2)]\!+\!\W[\x_1(\x_2),\x_2]\right)
+O(\epsilon^3)
=O(\epsilon^3)\,.$
\end{proof}
\notinclude{ 
\begin{lemma}[Monotonicity of $\Log{2}$]
Under the assumptions of Lemma\,\ref{thm:uniquenessLog2},
$\Log{2}_{\x_0}$ is a monotone operator on $B_\epsilon(\x_0)\subset\manifold$, i.\,e.
$$\big(\Log{2}_{\x_0}(\x_2)-\Log{2}_{\x_0}(\tilde\x_2),\x_2-\tilde\x_2\big)_\V>0\,.$$
\end{lemma}
\begin{proof}
$\big(\Log{2}_{\x_0}(\x_2)-\Log{2}_{\x_0}(\tilde\x_2),\x_2-\tilde\x_2\big)_\V
=\int_0^1\big(D\Log{2}_{\x_0}(\tilde\x_2+t(\x_2-\tilde\x_2))(\x_2-\tilde\x_2),\x_2-\tilde\x_2\big)_\V\,\d t
=(\tfrac12+O(\epsilon))\|\x_2-\tilde\x_2\|_\V^2$.
\end{proof}
}
\vspace{1ex}

\begin{lemma}[Local existence of $\Exp{2}$]\label{thm:existenceExp2}
Under the hypotheses \ref{enm:smoothMetric} and \ref{enm:smoothEnergy}, there exists an $\epsilon>0$
such that $\Exp{2}_{\x_0}(v)$ exists for any $\x_0\in\manifold$, $v\in\V$, with $\|v\|_\V<\epsilon$.
\end{lemma}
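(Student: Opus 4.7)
The plan is to realize $\Exp{2}_{\x_0}$ as the inverse of the already-controlled map $\Log{2}_{\x_0}$ via a quantitative inverse function theorem. The trivial discrete geodesic $(\x_0,\x_0,\x_0)$ is clearly the unique minimizer of $\E[(\x_0,\cdot,\x_0)]$ on $B_\epsilon(\x_0)$ (for the $\epsilon$ from Lemma \ref{thm:uniquenessLog2}), so $\Log{2}_{\x_0}(\x_0)=0$. Furthermore, by the remark following Lemma \ref{thm:uniquenessLog2}, the map $\Log{2}_{\x_0}\colon B_\epsilon(\x_0)\cap\manifold\to\V$ is continuously differentiable with
$$D\Log{2}_{\x_0}(\x_2)=\tfrac12\Id+O(\epsilon),$$
where, thanks to hypotheses \ref{enm:smoothMetric} and \ref{enm:smoothEnergy} together with the uniform bound \eqref{eq:boundg} on $g$, the $O(\epsilon)$ term can be taken uniformly in $\x_0\in\manifold$. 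Hence, for $\epsilon$ sufficiently small and uniform in $\x_0$, the differential $D\Log{2}_{\x_0}$ is uniformly invertible on $B_\epsilon(\x_0)$ with inverse close to $2\Id$.

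Given $v\in\V$ with $\|v\|_\V$ small, I would set up a Banach fixed-point iteration for $\x_2$ in the form
$$T_v(\x_2)=\x_2-2\bigl(\Log{2}_{\x_0}(\x_2)-v\bigr).$$
Then $DT_v=\Id-2\,D\Log{2}_{\x_0}(\cdot)=O(\epsilon)$, so $T_v$ is a contraction on $B_\epsilon(\x_0)$ with Lipschitz constant at most $\tfrac12$ when $\epsilon$ is small enough. Moreover, $T_v(\x_0)=\x_0+2v$, so the closed ball $\overline{B_\epsilon(\x_0)}$ is mapped into itself as soon as $\|v\|_\V\leq\epsilon/4$. The Banach fixed-point theorem then yields a unique $\x_2\in B_\epsilon(\x_0)$ with $\Log{2}_{\x_0}(\x_2)=v$, and by definition this $\x_2$ equals $\Exp{2}_{\x_0}(v)$.

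The main technical concern is uniformity of the constants in the base point $\x_0$. The radius $\epsilon$ on which $\Log{2}_{\x_0}$ is well-defined and smooth comes from Lemma \ref{thm:uniquenessLog2} and is already uniform in $\x_0$; the quantitative estimate on $D\Log{2}_{\x_0}$ from the remark uses only the coercivity \eqref{eq:boundg} of $g$, the identities in Lemma \ref{thm:consistent}, and the uniform smoothness of $\W$ from \ref{enm:smoothEnergy}, so no $\x_0$-dependence leaks in. Everything else is a routine application of the contraction mapping principle.
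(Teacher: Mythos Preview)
Your proof is correct and follows essentially the same route as the paper: both invert $\Log{2}_{\x_0}$ via a Banach fixed-point iteration, relying on the derivative estimate $D\Log{2}_{\x_0}=\tfrac12\Id+O(\epsilon)$ from the remark after Lemma~\ref{thm:uniquenessLog2}. The only cosmetic difference is the choice of iteration map: the paper uses $T_v\x_2=\x_2+v-\Log{2}_{\x_0}(\x_2)$ (contraction rate $\approx\tfrac12$) and appeals to Lemma~\ref{lem:Log2} for the self-mapping, whereas your Newton-type map $T_v\x_2=\x_2-2(\Log{2}_{\x_0}(\x_2)-v)$ has contraction rate $O(\epsilon)$ and lets you verify the self-mapping directly from $T_v(\x_0)=\x_0+2v$ and the Lipschitz bound, bypassing Lemma~\ref{lem:Log2}.
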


\begin{proof}
Choose $\epsilon$ to be one third the value from Lemma\,\ref{thm:uniquenessLog2}.
For $\x_0\in\manifold$, $v\in\V$ with $\|v\|_\V<\epsilon$, define the operator $T_v:B_{3\epsilon}(\x_0)\to\manifold$ by
$
T_v\x_2=\x_2+v-\Log{2}_{\x_0}(\x_2)\,.
$
Without loss of generality we may assume $\epsilon$ to be small enough
such that by Lemma\,\ref{lem:Log2}, $T_v:B_{3\epsilon}(\x_0)\to B_{3\epsilon}(\x_0)$.
Its derivative is given by $\Id-D\Log{2}_{\x_0}(\x_2)=\tfrac12\Id+O(\epsilon)$,
hence $T_v$ is a contraction, and by the Banach fixed point theorem has a unique fixed point $\Exp2_{\x_0}(v)$.
\end{proof}

\begin{remark}
By the inverse function theorem, $\Exp2_{\x_0}$ is differentiable at $v$ with $\|v\|_\V<\epsilon$, and
$
D\Exp2_{\x_0}(v)=[D\Log2_{\x_0}(\Exp2_{\x_0}(v))]^{-1}=2\Id+O(\epsilon)\,.
$
\end{remark}

\begin{lemma}
For $\|v\|_\V\leq\epsilon$ the estimate $\Exp{2}_{\x_0}(v)=(\x_0+2v)+O(\epsilon^{3/2})$ holds.
\end{lemma}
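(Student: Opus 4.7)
The plan is to derive this identity as an immediate consequence of Lemma~\ref{lem:Log2}, using that $\Exp{2}_{\x_0}$ is defined precisely as the inverse of $\Log{2}_{\x_0}$.

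First, I would set $\x_2 := \Exp{2}_{\x_0}(v)$, so that by the defining property of the discrete exponential we have $v = \Log{2}_{\x_0}(\x_2)$. In order to apply Lemma~\ref{lem:Log2} to this $\x_2$, I need to know that $\x_2$ lies in the ball on which the lemma is valid. This is handed to us by the proof of Lemma~\ref{thm:existenceExp2}: for $\epsilon$ chosen (as there) to be one third of the uniqueness radius of Lemma~\ref{thm:uniquenessLog2}, the fixed point $\Exp{2}_{\x_0}(v)$ lies in $B_{3\epsilon}(\x_0)$, which is still contained in the ball on which Lemma~\ref{lem:Log2} applies, and $O((3\epsilon)^{3/2}) = O(\epsilon^{3/2})$.

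Next, Lemma~\ref{lem:Log2} gives
\begin{equation*}
\x_0 + \Log{2}_{\x_0}(\x_2) = \frac{\x_0 + \x_2}{2} + O(\epsilon^{3/2}).
\end{equation*}
Substituting $\Log{2}_{\x_0}(\x_2) = v$ and solving for $\x_2$ yields $\x_2 = \x_0 + 2v + O(\epsilon^{3/2})$, which is precisely the claim. Since the whole argument is a one-line rearrangement once the ball containment is verified, I do not expect any serious obstacle; the only thing one has to be careful about is the consistent choice of $\epsilon$ so that Lemma~\ref{lem:Log2} can legitimately be invoked on $\Exp{2}_{\x_0}(v)$.
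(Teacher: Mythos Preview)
Your argument is correct. The only subtlety you flagged yourself—that $\x_2=\Exp{2}_{\x_0}(v)$ must lie in the ball on which Lemma~\ref{lem:Log2} is valid—is indeed resolved by the construction in Lemma~\ref{thm:existenceExp2}: the $\epsilon$ there is one third of the uniqueness radius, the fixed point lies in $B_{3\epsilon}(\x_0)$, and $O((3\epsilon)^{3/2})=O(\epsilon^{3/2})$.

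The paper takes a slightly different route. Rather than invoking Lemma~\ref{lem:Log2} and inverting, it reruns the same parallelogram-law computation directly for the $\Exp{2}$ case: with $\x_1=\x_0+v$ and $\x_2=\Exp{2}_{\x_0}(v)$, it expands
\[
\tfrac12\,g_{\x_0}(\x_0+2v-\x_2,\x_0+2v-\x_2)
= g_{\x_0}(\x_0-\x_1,\x_0-\x_1)+g_{\x_0}(\x_2-\x_1,\x_2-\x_1)-\tfrac12 g_{\x_0}(\x_2-\x_0,\x_2-\x_0)
\]
and shows the right-hand side is $O(\epsilon^3)$ via the approximation $\W[\x,\tilde\x]=g_{\x_0}(\x-\tilde\x,\x-\tilde\x)+O(\epsilon^3)$ together with Theorem~\ref{thm:energyConvergenceRate}. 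Your approach simply reuses the packaged conclusion of Lemma~\ref{lem:Log2} and the defining relation $\Log{2}_{\x_0}\circ\Exp{2}_{\x_0}=\Id$, which is more economical; the paper's version is self-contained and makes the symmetry between the $\Log{2}$ and $\Exp{2}$ estimates explicit. Substantively the two arguments are the same computation viewed from opposite ends.
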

\begin{proof}
Let us abbreviate $\x_1=\x_0+v$ and $\x_2(\x_1)=\Exp{2}_{\x_0}(v)$.
Then as in Lemma\,\ref{lem:Log2},\\[.5ex]
$\hspace*{.5ex}\tfrac12g_{\x_0}(\x_0+2v\!-\!\x_2(\x_1),\x_0+2v\!-\!\x_2(\x_1))$\\[.7ex]
$\hspace*{2ex}=g_{\x_0}(\x_0\!-\!\x_1,\x_0\!-\!\x_1)
+g_{\x_0}(\x_2(\x_1)\!-\!\x_1,\x_2(\x_1)\!-\!\x_1)
\!-\!\tfrac12g_{\x_0}(\x_2(\x_1)\!-\!\x_0,\x_2(\x_1)\!-\!\x_0)$\\[.7ex]
$\hspace*{2ex}=\W[\x_0,\x_1]+\W[\x_1,\x_2(\x_1)]
\!-\!\left(\W[\x_0,\x_1]+\W[\x_1,\x_2(\x_1)]\right)
+O(\epsilon^3)
=O(\epsilon^3)\,.$
\end{proof}
\vspace{1ex}

\begin{remark}\label{rem:ApprOrderExp2Log2}
Using the above boundedness, our previous estimates can be further improved to
$\|\Exp{2}_{\x_0}(v)-(\x_0+2v)\|_\V=O(\|v\|_\V^2)$ and 
$\|\Log{2}_{\x_0}(\x_2)-(\tfrac{\x_0+\x_2}2-\x_0)\|_\V=O(\|\x_2-\x_0\|_\V^2)$
for $\|v\|_\V$ and $\|\x_2-\x_0\|_\V$ smaller than $\epsilon$.
Indeed, if $(\x_0,\x_1,\x_2)$ is a discrete geodesic with $v_1=v=\x_1-\x_0$ and $v_2=\x_2-\x_1$,
and if $\psi_1$ is a variation of $\x_1$, then the optimality condition implies
\begin{align*}
0&=\W_{,2}[\x_0,\x_1](\psi_1)+\W_{,1}[\x_1,\x_2](\psi_1)\\
&=\W_{,2}[\x_1,\x_1](\psi_1)-\W_{,21}[\x_1,\x_1](\psi_1,\x_1-\x_0)+O(\|\x_1-\x_0\|_\V^2\|\psi_1\|_\V)\\
&\quad+\W_{,1}[\x_1,\x_1](\psi_1)+\W_{,12}[\x_1,\x_1](\psi_1,\x_2-\x_1)+O(\|\x_2-\x_1\|_\V^2\|\psi_1\|_\V)\\
&=2g_{\x_1}(\psi_1,v_1-v_2)+O((\|v_1\|_\V^2+\|v_2\|_\V^2)\|\psi_1\|_\V)\,,
\end{align*}
where we used Lemma\,\ref{thm:consistent}.
Using the coercivity of $g_{\x_1}$, the desired estimates can now easily be derived.
\end{remark}

\begin{theorem}[Existence and convergence of $\Exp{K}$]\label{thm:ExpK}
Let $\x:[0,1]\to\manifold$ be a smooth geodesic. Under the hypotheses \ref{enm:smoothMetric} and \ref{enm:smoothEnergy},
$\Exp{K}_{\x(0)}(\frac{\dot\x(0)}K)$ exists for $K$ large enough, and for $\tau=\frac1K$ one obtains 
$\big\|\x(1)-\Exp{K}_{\x(0)}\big(\tfrac{\dot\x(0)}K\big)\big\|_\V=O(\tau)\,.$
\end{theorem}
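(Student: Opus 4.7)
The plan is to use the identification $\Exp{K}_{\x(0)}(\Log{K}_{\x(0)}(\x(1))) = \x(1)$, which follows for $K$ large from Theorems~\ref{thm:existence}, \ref{thm:uniqueness} and the remark on discrete geodesics as exponential shooting. This reduces the task to bounding
\begin{equation*}
\bigl\| \Exp{K}_{\x(0)}(\tau\dot\x(0)) - \Exp{K}_{\x(0)}(\Log{K}_{\x(0)}(\x(1))) \bigr\|_\V\,,
\end{equation*}
which I would control by combining a Lipschitz estimate of order $O(K)$ for $v\mapsto\Exp{K}_{\x(0)}(v)$ near $\tau\dot\x(0)$ with the quantitative bound $\|\tau\dot\x(0) - \Log{K}_{\x(0)}(\x(1))\|_\V = O(\tau^2)$.

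For the Lipschitz estimate, I would linearize the shooting iteration $\hat\x_k := F(\hat\x_{k-2},\hat\x_{k-1})$ with $F(a,b) := \ExpO{2}{a}(b-a)$. Implicit differentiation of the Euler--Lagrange relation defining $\ExpO{2}{\cdot}$, combined with Lemma~\ref{thm:consistent} and Remark~\ref{rem:ApprOrderExp2Log2}, gives $D_bF = 2\Id + O(\tau)$ and $D_aF = -\Id + O(\tau)$. The unperturbed linear recursion $\delta\hat\x_k = 2\delta\hat\x_{k-1} - \delta\hat\x_{k-2}$ has double characteristic root $\lambda=1$, so initial-velocity perturbations grow only linearly in $k$. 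Existence of the iterates $\hat\x_0,\ldots,\hat\x_K$ follows by induction from Lemma~\ref{thm:existenceExp2}: this linear growth keeps $\|\hat\x_k-\hat\x_{k-1}\|_\V = O(\tau)$ throughout, which ensures $\ExpO{2}{\hat\x_{k-2}}$ is applicable at each step.

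For the logarithm estimate, I would quantify Theorem~\ref{thm:Log}: the final inequality in its proof reads
\begin{equation*}
\|K\Log{K}_{\x(0)}(\x(1)) - \dot\x(0)\|_\V \leq C\bigl(\tau + \|\dot\x-\dot\x_\tau\|_{L^2((0,1);\V)} + \|\x-\x_\tau\|_{L^\infty((0,1);\X)}\bigr)\,,
\end{equation*}
and substituting the classical interpolation estimate $\|\x-\Itau\x\|_{W^{1,2}((0,1);\V)} = O(\tau)$ into the a priori bound at the end of Theorem~\ref{thm:W12}'s proof, together with the Arzel\`a--Ascoli upgrade of Corollary~\ref{thm:Linfty}, promotes the rates of $\|\x-\x_\tau\|_{L^2((0,1);\X)}$ and $\|\x-\x_\tau\|_{L^\infty((0,1);\X)}$ to $O(\tau)$ under the smoothness of the continuous geodesic. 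Dividing by $K$ then yields $\|\tau\dot\x(0) - \Log{K}_{\x(0)}(\x(1))\|_\V = O(\tau^2)$.

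Combining the two estimates gives the desired bound $O(K)\cdot O(\tau^2) = O(\tau)$. The main obstacle is the Lipschitz estimate of order $O(K)$: the $O(\tau)$ perturbations in $D_aF,D_bF$ could in principle split the degenerate eigenvalue $\lambda = 1$ off the unit circle, producing exponential amplification $(1+O(\sqrt{\tau}))^K = \exp(O(1/\sqrt\tau))$ that would destroy the bound. A clean way to avoid this is to exploit the variational (symplectic-like) structure of the iteration, which constrains the transfer matrix's eigenvalues to stay on the unit circle uniformly in $\tau$, so that the polynomial-in-$k$ amplification inherited from the Jordan block at $\lambda = 1$ survives at $O(k)$ uniformly in $K$.
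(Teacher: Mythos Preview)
Your strategy is genuinely different from the paper's and is in principle viable, but the part you yourself flag as ``the main obstacle'' is exactly where it breaks down as written. The symplectic argument you invoke does not rescue the $O(K)$ Lipschitz bound: even if the linearized transfer map is symplectic, its determinant being $1$ only forces eigenvalues into reciprocal pairs $(\lambda,1/\lambda)$; it does \emph{not} prevent the trace from being $2+O(\tau)$, which still gives a discriminant of order $\tau$ and hence real eigenvalues $1\pm O(\sqrt\tau)$. So symplecticity alone yields at best $(1+C\sqrt\tau)^K$, precisely the blow-up you wanted to avoid. To get $|\lambda|\leq1+C\tau$ you would need to show that the specific $O(\tau)$ perturbations in $D_aF$ and $D_bF$ conspire so that the discriminant is $O(\tau^2)$ (or negative), which amounts to proving that the linearized discrete iteration is a consistent and stable discretization of the Jacobi equation along $\x$. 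That is a legitimate route, but it is essentially the same consistency-plus-Gronwall analysis the paper carries out---just transferred to the variational equation rather than the geodesic equation itself---so it does not shortcut anything. A secondary issue: the identity $\Exp{K}\circ\Log{K}=\Id$ requires uniqueness of the discrete geodesic between $\x(0)$ and $\x(1)$, which Theorem~\ref{thm:uniqueness} only provides when the endpoints are close; the statement of Theorem~\ref{thm:ExpK} makes no such assumption.

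The paper avoids both problems by comparing the iterates $\x_k=\Exp{k}_{\x(0)}(\tau\dot\x(0))$ \emph{directly} with the sampled continuous geodesic $\x(t_k)$, never invoking $\Log{K}$ or a global Lipschitz bound. It Taylor-expands the optimality condition $\W_{,2}[\x_{k-1},\x_k]+\W_{,1}[\x_k,\x_{k+1}]=0$ and the continuous geodesic equation about the same base point, subtracts, and obtains a second-order recursion for the errors $e_k=\x_k-\x(t_k)$ and $e_k^v=(e_k-e_{k-1})/\tau$ of the form
\[
\tfrac{e_{k+1}^v-e_k^v}{\tau}=O\big(\tau+\|e_k\|_\V+\|e_k^v\|_\V+\|e_{k+1}^v\|_\V\big)\,,
\]
valid as long as the errors stay bounded. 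A discrete Gronwall argument then gives $\|e_k\|_\V,\|e_k^v\|_\V\leq\kappa\tau$, and an induction using Lemma~\ref{thm:existenceExp2} simultaneously supplies existence of each $\x_{k+1}$. The point is that anchoring to the smooth geodesic turns the dangerous marginal recursion into a benign one at the level of the \emph{acceleration} error, so no eigenvalue analysis is needed.
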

\begin{proof}
To examine the convergence of the discrete exponential, we need to linearize the optimality condition\,\eqref{eq:discreteEL}.
Let $(\x_0,\ldots,\x_K)$ be some discrete path in $\manifold$ and let $v_k=(\x_k-\x_{k-1})/\tau$ for $\tau=1/K$.
As in the proof of Theorem\,\ref{thm:W12}, for $\psi_k\in\V$ we find
{\allowdisplaybreaks
\begin{align}
&\W_{,2}[\x_{k-1},\x_k](\psi_k)\!+\!\W_{,1}[\x_k,\x_{k\!+\!1}](\psi_k)\nonumber\\
=&\,\tau\!\!\int_0^\tau\!\!\!(1-\tfrac{t}\tau)\Big[
\W_{,222}[\x_{k-1},\x_{k-1}\!+\!tv_k](v_k,v_k,\tfrac{t}\tau\psi_k)
\!+\!2\W_{,22}[\x_{k-1},\x_{k-1}\!+\!tv_k](v_k,\tfrac1\tau\psi_k)\nonumber\\
&+\!\W_{,221}[\x_k,\x_k\!+\!tv_{k\!+\!1}](v_{k\!+\!1},v_{k\!+\!1},\psi_k)
\!+\!\W_{,222}[\x_k,\x_k\!+\!tv_{k\!+\!1}](v_{k\!+\!1},v_{k\!+\!1},\psi_k)\nonumber\\
&-\W_{,222}[\x_k,\x_k\!+\!tv_{k\!+\!1}](v_{k\!+\!1},v_{k\!+\!1},\tfrac{t}\tau\psi_k)
-2\W_{,22}[\x_k,\x_k\!+\!tv_{k\!+\!1}](v_{k\!+\!1},\tfrac1\tau\psi_k)
\Big]\,\d t\nonumber\\
=&\,\tau\!\!\int_0^\tau(1-\tfrac{t}\tau)\Big[
-\tfrac4\tau(g_{\x_k}(v_{k\!+\!1},\psi_k)-g_{\x_{k-1}}(v_{k},\psi_k))\!+\!2D_\x g_{\x_k}(\psi_k)(v_{k\!+\!1},v_{k\!+\!1})\nonumber\\
&\quad+ \int_0^1  \Big( 2 \W_{,222}[\x_{k\!-\!1},\x_{k\!-\!1}\!+\!r t v_k](v_k,\tfrac1\tau\psi_k,tv_k) \nonumber \\[-1ex]
&\quad\qquad \quad -\!2\W_{,222}[\x_k,\x_k\!+\!r t v_{k\!+\!1}](v_{k\!+\!1},\tfrac1\tau\psi_k,tv_{k\!+\!1})\nonumber\\
&\quad\qquad  \quad +\!\W_{,2212}[\x_k,\x_k\!+\!r t v_{k\!+\!1}](v_{k\!+\!1},v_{k\!+\!1},\psi_k,tv_{k\!+\!1}) \nonumber \\
&\quad\qquad \quad +\!\W_{,2222}[\x_k,\x_k\!+\!r t v_{k\!+\!1}](v_{k\!+\!1},v_{k\!+\!1},\psi_k,tv_{k\!+\!1}) \Big) \d r \nonumber\\
&\quad +\!\W_{,222}[\x_{k-1},\x_{k-1}\!+\!tv_k](v_k,v_k,\tfrac{t}\tau\psi_k) 
-\W_{,222}[\x_k,\x_k\!+\!tv_{k\!+\!1}](v_{k\!+\!1},v_{k\!+\!1},\tfrac{t}\tau\psi_k)
\Big]\,\d t\nonumber\\
=&-\tau^2\left[2\frac{g_{\x_k}(v_{k\!+\!1},\psi_k)-g_{\x_{k-1}}(v_{k},\psi_k)}\tau-D_\x g_{\x_k}(\psi_k)(v_{k\!+\!1},v_{k\!+\!1})\right]\nonumber\\
&+\!O\!\left[(\tau(\|v_k\|_\V^3\!\!+\!\!\|v_{k\!+\!1}\|_\V^3)\!+\!(\tau\|v_k\|_\V^2\!\!+\!\!\|v_{k\!+\!1}\!\!-\!v_k\|_\V)
(\|v_{k\!+\!1}\|_\V\!\!+\!\!\|v_k\|_\V))\tau^2\|\psi_k\|_\V\right]\hspace{-1.5ex}\label{eqn:linOptCond}
\end{align}
}
assuming sufficient differentiability of $\W$ and using the simple Taylor expansion formula
\begin{multline*}
f[a_2,b_2](c_2,c_2,d)-f[a_1,b_1](c_1,c_1,d)
=f[a_1,b_1](c_2-c_1,c_2,d)+f[a_1,b_1](c_1,c_2-c_1,d)\\
+f_{,1}[a,b](c_2,c_2,d,a_2-a_1)
+f_{,2}[a,b](c_2,c_2,d,b_2-b_1)
\end{multline*}
for a spatially dependent, differentiable trilinear form $f$ and $a=(1-\xi)a_1+\xi a_2$, $b=(1-\xi)b_1+\xi b_2$ for some $\xi\in[0,1]$.

Next, let us abbreviate $t_k=k\tau$.
The smooth geodesic $\x$ satisfies
\begin{align*}
0&=-2\tfrac\d{\d t}g_{\x(t)}(\dot\x(t),\psi_k)|_{t=t_k}+D_\x g_{\x(t_k)}(\psi_k)(\dot\x(t_k),\dot\x(t_k))\\
&=-2\frac{g_{\x(t_k)}(u_{k+1},\psi_k)-g_{\x(t_{k-1})}(u_k,\psi_k)}\tau+D_\x g_{\x(t_k)}(\psi_k)(u_{k+1},u_{k+1})+O(\tau\|\psi_k\|_\V)
\end{align*}
for $u_k=\tfrac{\x(t_k)-\x(t_{k-1})}\tau$ and $|O(\tau \|\psi_k\|_\V)| \leq C \tau(\|\dddot\x\|_\V+\|\ddot\x\|_\V(\|\dot\x\|_\V+\tau\|\ddot\x\|_\V))\|\psi_k\|_\V$.
Now assume $\x_{k+1}:=\Exp{k+1}_{\x(0)}(\dot \x(0)/K)$ exists for some $k<K$ (and thus also $\x_0,\ldots,\x_k$), then the left-hand side of \eqref{eqn:linOptCond} is zero.
Subtracting $1/\tau^2$ times \eqref{eqn:linOptCond} from the above equation yields
\begin{align}
&O\left[(\tau+\tau\|v_k\|_\V^3+\tau\|v_{k+1}\|_\V^3+(\tau\|v_k\|_\V^2+\|v_{k+1}-v_k\|_\V)(\|v_{k+1}\|_\V+\|v_k\|_\V))\|\psi_k\|_\V\right]\nonumber\\
&=2\frac{g_{\x_k}(v_{k+1},\psi_k)-g_{\x_{k-1}}(v_{k},\psi_k)}\tau
-2\frac{g_{\x(t_k)}(u_{k+1},\psi_k)-g_{\x(t_{k-1})}(u_k,\psi_k)}\tau\nonumber\\
&\quad-D_\x g_{\x_k}(\psi_k)(v_{k+1},v_{k+1})
+D_\x g_{\x(t_k)}(\psi_k)(u_{k+1},u_{k+1})\label{eqn:timeSteppingError}
\end{align}
Introducing $e_k=\x_k-\x(t_k)$ as well as $e_k^v=\frac{e_k-e_{k-1}}\tau$, the first line on the right-hand side can be rewritten as
\begin{flalign*}
&\mathrlap{2 \frac{g_{\x_k}(v_{k+1}-v_k,\psi_k)-g_{\x(t_k)}(u_{k+1}-u_k,\psi_k)}\tau}&\\
&&+2 D_\x g_{\x_k}(v_k)(v_k,\psi_k)- 2 D_\x g_{\x(t_k)}(u_k)(u_k,\psi_k)
+O(\tau(\|v_k\|_\V^3+\|u_k\|_\V^3)\|\psi_k\|_\V) \\
=&\mathrlap{ 2 g_{\x_k}\Big(\dfrac{e_{k+1}^v-e_k^v}\tau,\psi_k\Big) +O\big[(\|e_k\|_\V\|\tfrac{u_{k+1}-u_k}\tau\|_\V+\|e_k\|_\V\|u_k\|_\V^2}& \\
&&+\|e_k^v\|_\V(\|u_k\|_\V+\|v_k\|_\V)+\tau(\|v_k\|_\V^3+\|u_k\|_\V^3))\|\psi_k\|_\V\big]\,,
\end{flalign*}
and the second line is bounded by $O[(\|e_{k\!}\|_\V\|u_{k+1\!}\|_\V^2+\|e_{k+1\!}^v\|_\V\|u_{k+1\!}+v_{k+1\!}\|_\V)\|\psi_{k\!}\|_\V]$.
Thus, exploiting the boundedness of $\x$ and its derivatives as well as
\begin{align*}
u_k&=\dot\x(t_k)+\tau O(\ddot\x)\,,\quad
u_{k+1}-u_k=\tau\ddot\x(t_k)+\tau^2O(\dddot\x)\,,\\
\|v_k\|_\V&\leq\|u_k\|_\V+\|e_k^v\|_\V\,,\quad
\|v_{k+1}-v_k\|_\V\leq\|u_{k+1}-u_k\|_\V+\|e_{k+1}^v\|_\V+\|e_k^v\|_\V\,,
\end{align*}
\eqref{eqn:timeSteppingError} can be rewritten as
\begin{equation*}
\begin{pmatrix}\frac{e_{k+1}-e_k}\tau\\[1ex] \frac{e_{k+1}^v-e_k^v}\tau\end{pmatrix}
=\begin{pmatrix}
e_{k+1}^v\\[2ex] O[\tau+\|e_k\|_\V+(\|e_k^v\|_\V+\|e_{k+1}^v\|_\V)(1+\|e_k^v\|_\V+\|e_{k+1}^v\|_\V)^2]
\end{pmatrix}\,,
\end{equation*}
where we have also used the uniform coerciveness of $g$.
If $\|e_j^v\|_\V\leq1$ for $j=0,\ldots,k+1$, then the above implies the existence of a constant $C>0$
such that with $U=C\left(\begin{smallmatrix}1&1\\1&1\end{smallmatrix}\right)$,
\begin{equation*}
(\Id-\tau U)\begin{pmatrix}\|e_{j+1}\|_\V\\\|e_{j+1}^v\|_\V\end{pmatrix}
\leq(\Id+\tau U)\begin{pmatrix}\|e_{j}\|_\V\\\|e_{j}^v\|_\V\end{pmatrix}
+C\tau^2\,,\qquad j=0,\ldots,k,
\end{equation*}
which together with $e_0=0$, $\|e_1^v\|_\V=O(\tau)$ by classical arguments implies
$\|e_j\|_\V,\|e_j^v\|_\V\leq\kappa\tau$, $j=0,\ldots,k+1$, for some $\kappa>0$
which is independent of $k$ and $K$.

Now choose $\tfrac1\tau=K$ large enough such that $C_K:=\kappa\tau^2+\tau\max_{t\in[0,1]}\|\dot\x(t)\|_\V<\epsilon$
for $\epsilon\ll1$ from Lemma\,\ref{thm:existenceExp2}.
Then by induction, $\Exp{k}(\dot\x(0)/K)=\x_k$ exists with $\|e_k\|_\V,\|e_k^v\|_\V<\kappa\tau$.
Indeed, for $k=0$ the situation is clear.
Now if $\x_{j}$ exists with $\|e_j\|_\V,\|e_j^v\|_\V<\kappa\tau$ for all $j<k$,
then 
$
\|\x_{k-1}-\x_{k-2}\|_\V 
\leq  \left(\|u_{k-1}\|_\V  + \|e^v_{k-1}\|_\V\right) \tau 
\leq  \left(\max_{t\in[0,1]}\|\dot\x(t)\|_\V + \kappa \tau \right)  \tau \leq C_K < \epsilon
$
so that $\x_k=\Exp2_{\x_{k-2}}(\x_{k-1}-\x_{k-2})$ exists by Lemma\,\ref{thm:existenceExp2}
with $\x_k-\x_{k-1}=\x_{k-1}-\x_{k-2}+O(\epsilon^{3/2})$.
Thus $\|e_k\|_\V,\|e_k^v\|_\V<1$, which via the above error estimates in turn implies $\|e_k\|_\V,\|e_k^v\|_\V<\kappa\tau$.
\end{proof}

\begin{theorem}[Existence and convergence of $\ParTp_{\x_K,\ldots,\x_0}$]
Let $\x:[0,1]\to\manifold$ be a smooth path,
and let $\zeta:[0,1]\to\V$ be a parallel vector field along $\x$.
For $K\in\N$ and $\tau=\frac1K$, let $\x_k=\x(k\tau)$, then under the hypotheses \ref{enm:smoothMetric} and \ref{enm:smoothEnergy}, the discrete parallel transport fulfills
$
\|K\ParTp_{\x_K,\ldots,\x_0}(\tfrac{\zeta(0)}K)-\zeta(1)\|_\V =O(\tau)\,.
$
\end{theorem}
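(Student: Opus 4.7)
My plan is to reduce the theorem to a stability estimate for a discrete ODE extracted by Taylor expansion of \eqref{eq:ELParallel1}--\eqref{eq:ELParallel2}. After rescaling $\eta_k:=\zeta_k/\tau$ (so that $\eta_0=\zeta(0)$ and the claim becomes $\|\eta_K-\zeta(1)\|_\V=O(\tau)$), I would proceed by simultaneous induction on $k$ establishing (a) that $\x_k^c$ and $\zeta_k$ are well defined by the recursion and (b) that the error $e_k:=\eta_k-\zeta(t_k)$ satisfies $\|e_k\|_\V\leq\kappa\tau$ with a constant $\kappa$ independent of $k$ and $K$. For (a) I rewrite \eqref{eq:ELParallel1} as $\x_k^c=(\x_{k-1}+\zeta_{k-1})+\Log{2}_{\x_{k-1}+\zeta_{k-1}}(\x_k)$ and \eqref{eq:ELParallel2} as $\x_k+\zeta_k=\ExpO{2}{\x_{k-1}}(\x_k^c-\x_{k-1})$, both of which exist and are unique under the inductive bound $\|\zeta_{k-1}\|_\V=O(\tau)$ by Lemma~\ref{thm:uniquenessLog2} and Lemma~\ref{thm:existenceExp2}, provided $\tau$ is small enough.

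For (b), which is the heart of the argument, I Taylor-expand both equations around $(\x_{k-1},\x_{k-1})$ up to the third Fr\'echet derivative of $\W$, writing $\x_k-\x_{k-1}=\tau u_k$ with $u_k=\dot\x(t_{k-1})+O(\tau)$, $\zeta_{k-1}=\tau\eta_{k-1}$, and $b_k:=\x_k^c-\x_{k-1}$. Using Lemma~\ref{thm:consistent}, the quadratic part of \eqref{eq:ELParallel1} pins down $b_k=\tfrac{\tau}{2}(\eta_{k-1}+u_k)+O(\tau^2)$, after which the quadratic part of \eqref{eq:ELParallel2} yields only the trivial identity $\eta_k=\eta_{k-1}+O(\tau)$. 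The connection therefore lives in the cubic-in-$\W$ terms: invoking the identity \eqref{eqn:metricDeriv} to rewrite the combination $\W_{,221}+\W_{,222}$ as $2D_\x g$, the expansion takes the form
\begin{equation*}
\tfrac1\tau\bigl(g_{\x_k}(\eta_k,\psi)-g_{\x_{k-1}}(\eta_{k-1},\psi)\bigr)=\Gamma_{k-1}(\eta_{k-1},u_k,\psi)+O(\tau)\|\psi\|_\V\quad\text{for all }\psi\in\V,
\end{equation*}
where $\Gamma_{k-1}$ is a trilinear expression in $D_\x g_{\x_{k-1}}$ that represents the finite-difference form of $\tfrac{\d}{\d t}g_\x(\zeta,\psi)=g_\x(\zeta,\nabla_{\dot\x}\psi)$, i.e.\ of the parallel transport equation $\nabla_{\dot\x}\zeta=0$ tested against $\psi$.

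Subtracting the identity satisfied by the smooth $\zeta(t_k)$ from this discrete ODE and using the uniform coercivity of $g$ yields an error recursion $\|e_k\|_\V\leq(1+C\tau)\|e_{k-1}\|_\V+C\tau^2$ with $e_0=0$, whence discrete Gr\"onwall gives $\|e_k\|_\V\leq\kappa\tau$ uniformly in $k$ and closes the induction. The main obstacle is precisely the Taylor bookkeeping in (b): since the leading terms of both Euler--Lagrange equations combine only to the trivial identity $\eta_k=\eta_{k-1}+O(\tau)$, the entire content of the theorem sits in the cubic corrections, and one must combine the expansions of \eqref{eq:ELParallel1} and \eqref{eq:ELParallel2} with \eqref{eqn:metricDeriv} to recognise the resulting trilinear form as a Christoffel-type contribution; at the same time the inductive verification that $\|\zeta_{k-1}\|_\V$ and $\|\x_k-\x_{k-1}\|_\V$ remain in the range of validity of Lemmas~\ref{thm:uniquenessLog2} and~\ref{thm:existenceExp2} must be threaded through the error induction, exactly as in the closing argument of the proof of Theorem~\ref{thm:ExpK}.
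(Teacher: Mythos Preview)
Your plan is essentially the paper's strategy: Taylor-expand the pair \eqref{eq:ELParallel1}--\eqref{eq:ELParallel2}, subtract to eliminate the midpoint, match the resulting cubic terms against the continuous parallel-transport equation, and close via an error recursion and the same existence/induction loop as in Theorem~\ref{thm:ExpK}. Two points of comparison are worth recording. First, the paper expands both equations about $(\x_k^c,\x_k^c)$ rather than $(\x_{k-1},\x_{k-1})$; this makes the two expansions symmetric in the midpoint so that upon subtraction the quadratic terms cancel without ever solving for $b_k$, and the cubic remainder is already in a compact $\W_{,112}+\W_{,221}$ form. Your route via $(\x_{k-1},\x_{k-1})$ works too, but the bookkeeping is heavier. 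Second---and this is the one place where your outline is thin---identifying the trilinear residue as the Christoffel term needs more than \eqref{eqn:metricDeriv}: after subtraction the raw expansion contains all eight third derivatives $\W_{,ijk}$ with mixed argument slots, and to collapse them you must also differentiate the identities of Lemma~\ref{thm:consistent} to obtain $\W_{,221}+\W_{,222}=\W_{,111}+\W_{,112}=-\W_{,211}-\W_{,212}=-\W_{,121}-\W_{,122}$ and then use Schwarz's theorem to permute arguments. Only with this full chain does the cubic part reduce to $\tfrac12(\W_{,112}+\W_{,221})(\zeta_{k-1},v_k,\psi)$, which is what matches $g_\x(\Gamma(\dot\x,\zeta),\psi)$ in \eqref{eqn:contParTransp}. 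Once that algebra is in place, your Gr\"onwall closure and the interleaved existence argument go through exactly as you describe.
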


\begin{proof}
At first, we examine the residual resulting from the evaluation of the discrete counterpart of the parallel transport equation on the interpolated continuous vector field $\zeta$.
$\zeta$ is defined to be parallel if $\frac{D}{\d t} \zeta =0$ holds for the covariant derivative 
$\frac{D}{\d t}$ of $\zeta$ along the curve $x$, which using the definition of the covariant derivative turns into 
\beqn\label{eqn:contParTransp}
0= g_{\x(t)}((\tfrac{D}{\d t} \zeta)(t), \psi) = g_{\x(t)}(\dot \zeta(t), \psi) + g_{\x(t)}(\Gamma(\dot \x(t),\zeta(t)),\psi) 
\eeqn
for all test vectors $\psi\in\V$ and $t\in [0,1]$.
Here the Christoffel tensor $\Gamma: \V \times \V \to \V$ is defined by
$
g_{\x}(\Gamma(v,w),\psi) = \tfrac12 \left[
(D_\x g_{\x})(w)(\psi,v) + (D_\x g_{\x})(v)(\psi,w) - (D_\x g_{\x})(\psi)(v,w)
\right]
$
for all $v,w,\psi\in\V$.
Let us abbreviate $t_k=k\tau$, $\zeta_k=\zeta(t_k)$, and $v_k=\frac{\x_k-\x_{k-1}}\tau$.
Applying $g_\x(v,w) = \frac12\W_{,22}[\x,\x](v,w)$ (Lemma\,\ref{thm:consistent}) and
\eqref{eqn:metricDeriv} 
as well as the approximations $\dot\zeta(t_{k-1})=\frac{\zeta_k-\zeta_{k-1}}{\tau}+O(\tau\|\ddot\zeta\|_\V)$,
$\dot\x(t_{k-1})=v_k+O(\tau\|\ddot\x\|_\V)$, \eqref{eqn:contParTransp} turns into
\beqan
&&O\Big[\tau(\sup_{t\in[0,1]}\|\ddot\zeta(t)\|_\V+\sup_{t\in[0,1]}\|\ddot\x(t)\|_\V\sup_{t\in[0,1]}\|\zeta(t)\|_\V)\|\psi\|_\V\Big]\nonumber\\
&=& \W_{,22}[\x_{k-1},\x_{k-1}](\tfrac{\zeta_k-\zeta_{k-1}}{\tau},\psi) \nonumber\\
&& + \tfrac12\Big(
\W_{,221}[\x_{k-1},\x_{k-1}](\psi, v_k,\zeta_{k-1}) +
\W_{,222}[\x_{k-1},\x_{k-1}](\psi, v_k,\zeta_{k-1}) \nonumber\\
&& \qquad + \W_{,221}[\x_{k-1},\x_{k-1}](\psi,\zeta_{k-1} ,v_k) +
\W_{,222}[\x_{k-1},\x_{k-1}](\psi,\zeta_{k-1}, v_k) \nonumber\\
&& \qquad - \W_{,221}[\x_{k-1},\x_{k-1}](v_k,\zeta_{k-1},\psi) -
\W_{,222}[\x_{k-1},\x_{k-1}](v_k,\zeta_{k-1},\psi)
\Big) \nonumber\\
&=& \W_{,22}[\x_{k-1},\x_{k-1}](\tfrac{\zeta_k-\zeta_{k-1}}{\tau},\psi) \nonumber\\
&& + \tfrac12\Big(
-\W_{,211}[\x_{k-1},\x_{k-1}](\psi, v_k,\zeta_{k-1}) -
\W_{,212}[\x_{k-1},\x_{k-1}](\psi, v_k,\zeta_{k-1}) \nonumber\\
&& \qquad + \W_{,221}[\x_{k-1},\x_{k-1}](\psi,\zeta_{k-1},v_k) -
\W_{,221}[\x_{k-1},\x_{k-1}](v_k,\zeta_{k-1},\psi)
\Big) \nonumber\\
&=& \W_{,22}[\x_{k-1},\x_{k-1}](\tfrac{\zeta_k-\zeta_{k-1}}{\tau},\psi) \nonumber\\
&& - \tfrac12\Big(
\W_{,211}[\x_{k-1},\x_{k-1}](\psi, v_k,\zeta_{k-1}) +
\W_{,221}[\x_{k-1},\x_{k-1}](v_k,\zeta_{k-1},\psi)
\Big) \nonumber\\
&=& \W_{,22}[\x_{k-1},\x_{k-1}](\tfrac{\zeta_k-\zeta_{k-1}}{\tau},\psi) 
- \frac{
\W_{,112}[\x_{k-1},\x_{k-1}] + \W_{,221}[\x_{k-1},\x_{k-1}](\zeta_{k-1},v_k,\psi)}{2}\,,\nonumber
\eeqan
where we have used Schwarz's theorem and the identity
\begin{multline*}
\W_{,221}[\x,\x]+\W_{,222}[\x,\x]=\W_{,111}[\x,\x]+\W_{,112}[\x,\x]\\
=-\W_{,121}[\x,\x]-\W_{,122}[\x,\x]=-\W_{,211}[\x,\x]-\W_{,212}[\x,\x]\,,
\end{multline*}
which can be derived by differentiating the identity 
$\W_{,22}[\x,\x]=\W_{,11}[\x,\x]=-\W_{,12}[\x,\x]=-\W_{,21}[\x,\x]$
from Lemma\,\ref{thm:consistent} with respect to $\x$.
Due to the smoothness of $\zeta$ and $\x$ we achieve the estimate 
\beqan
&&\W_{,22}[\x_{k-1},\x_{k-1}](\tfrac{\zeta_k-\zeta_{k-1}}{\tau},\psi)  \nonumber\\
&& - \tfrac12\Big(
\W_{,112}[\x_{k-1},\x_{k-1}] + \W_{,221}[\x_{k-1},\x_{k-1}]\Big)(\zeta_{k-1},v_k,\psi) = O(\tau \|\psi\|_\V)
\label{eqn:ParTranspTruncErr}
\eeqan

Next, we derive an estimate for the residual obtained when evaluating the discrete counterpart of the parallel transport equation on the discrete solution.
Thus, let us abbreviate $\xi_k=\ParTp_{\x_k,\ldots,\x_0}(\zeta(0)/K)$
and denote the center of the $k$\textsuperscript{th} parallelogram in Schild's ladder by $\x_k^c$.
$\xi_k$ and $\x_k^c$ satisfy for all $\psi\in\V$ the two nonlinear equations
\beqan
\W_{,2}[\x_{k-1}+\xi_{k-1},\x_k^c](\psi) + \W_{,1}[\x_k^c,\x_{k}](\psi) &=& 0\,, \label{eq:ELP1} \\
\W_{,2}[\x_{k-1}, \x_k^c](\psi) + \W_{,1}[\x_k^c,\x_k+\xi_k](\psi) &=& 0\,, \label{eq:ELP2}
\eeqan
which for the moment we assume to be uniquely solvable.
Upon Taylor expansion about $(\x_k^c,\x_k^c)$ analogously to \eqref{eqn:TaylorExpansion},
one obtains
\beqa
\W[\x_{k-1}\!+\!\xi_{k-1}, \x^c_{k}] &=&
\int_0^1 (1-s)\W_{,11}[\x^c_k\!+s (\x_{k-1}\!+\!\xi_{k-1}\!-\!\x^c_{k}),\x^c_k]
\\[-1ex]
&& \qquad \qquad \qquad (\x_{k-1}\!+\!\xi_{k-1}\!-\!\x^c_{k},\x_{k-1}\!+\!\xi_{k-1}\!-\!\x^c_{k})\, \d s\,, \\
\W[\x^c_{k}, \x_{k}] &=&
\int_0^1 (1-s)\W_{,22}[\x^c_{k},\x^c_{k}\! +\! s (\x_{k}\!-\!\x^c_{k})] (\x_{k}\!-\!\x^c_{k},\x_{k}\!-\!\x^c_{k})\, \d s\,.
\eeqa
Differentiating these expressions, the first equation \eqref{eq:ELP1} defining the discrete parallel transport  turns into
{
\begin{align*}
0&=\int_0^1(1-s)\Big[
-2\W_{,11}[\x_k^c,\x_k^c](\psi,\x_{k-1}\!+\!\xi_{k-1}-\x_k^c)\\[-.5ex]
&\qquad\quad-2 \int_0^1 \!\!\!\W_{,111}[\x_k^c\!+\!rs(\x_{k\!-\!1\!}\!+\!\xi_{k\!-\!1\!}\!-\!\x_k^c),\x_k^c](\psi,\x_{k\!-\!1\!}\!+\!\xi_{k\!-\!1\!}\!-\!\x_k^c,s(\x_{k\!-\!1\!}\!+\!\xi_{k\!-\!1\!}\!-\!\x_k^c)) \d r\\
&\qquad\!+\!\W_{,111}[\x_k^c\!+\!s(\x_{k-1}\!+\!\xi_{k-1}\!-\!\x_k^c),\x_k^c](\x_{k-1}\!+\!\xi_{k-1}\!-\!\x_k^c,\x_{k-1}\!+\!\xi_{k-1}\!-\!\x_k^c,(1-s)\psi)\\
&\qquad\!+\!\W_{,112}[\x_k^c\!+\!s(\x_{k-1}\!+\!\xi_{k-1}\!-\!\x_k^c),\x_k^c](\x_{k-1}\!+\!\xi_{k-1}\!-\!\x_k^c,\x_{k-1}\!+\!\xi_{k-1}\!-\!\x_k^c,\psi)\\
&\qquad-2\W_{,22}[\x_k^c,\x_k^c](\psi,\x_{k}-\x_k^c)\\
&\qquad\quad-2\int_0^1 \!\!\! \W_{,222}[\x_k^c,\x_k^c\!+\!rs(\x_{k}-\x_k^c)](\psi,\x_{k}-\x_k^c,s(\x_{k}-\x_k^c)) \d r\\
&\qquad\!+\!\W_{,221}[\x_k^c,\x_k^c\!+\!s(\x_{k}-\x_k^c)](\x_{k}-\x_k^c,\x_{k}-\x_k^c,\psi)\\
&\qquad\!+\!\W_{,222}[\x_k^c,\x_k^c\!+\!s(\x_{k}-\x_k^c)](\x_{k}-\x_k^c,\x_{k}-\x_k^c,(1-s)\psi)
\Big]\,\d s\\
&=-\W_{,11}[\x_k^c,\x_k^c](\psi,\x_{k-1}\!+\!\xi_{k-1}-\x_k^c)-\W_{,22}[\x_k^c,\x_k^c](\psi,\x_{k}-\x_k^c)\\
&\quad \!+\!\tfrac12\W_{,112}[\x_k^c,\x_k^c](\x_{k-1}\!+\!\xi_{k-1}-\x_k^c,\x_{k-1}\!+\!\xi_{k-1}-\x_k^c,\psi) \\
&\quad  \!+\!\tfrac12\W_{,221}[\x_k^c,\x_k^c](\x_{k}-\x_k^c,\x_{k}-\x_k^c,\psi)\\
&\quad  \!+\!O\left[(\|\x_{k-1}\!+\!\xi_{k-1}-\x_k^c\|_\V^3\!+\!\|\x_k-\x_k^c\|_\V^3)\|\psi\|_\V\right]\,.
\end{align*}
}
Here we used that for a spatially differentiable trilinear form $f$ the relation
$
-2f[a,a+rsv](s\psi,v,v)+f[a,a+sv]((1-s)\psi,v,v)
=f[a,a]((1-3s)\psi,v,v) + \,O(\|v\|^3\|\psi\|)
$
holds for $0\leq r,s \leq 1$, which after multiplication with $(1-s)$ integrates up to $O(\|v\|^3\|\psi\|)$.
Likewise, the second equation \eqref{eq:ELP2} turns into
\beqa
0&=&-\W_{,11}[\x_k^c,\x_k^c](\psi,\x_{k-1}-\x_k^c)-\W_{,22}[\x_k^c,\x_k^c](\psi,\x_{k}\!+\!\xi_k-\x_k^c)\\
&&+\tfrac12 \W_{,112}[\x_k^c,\x_k^c](\x_{k-1}\!-\!\x_k^c,\x_{k-1}\!-\!\x_k^c,\psi)\\
&&+\tfrac12\W_{,221}[\x_k^c,\x_k^c](\x_{k}\!+\!\xi_k-\x_k^c,\x_{k}\!+\!\xi_k-\x_k^c,\psi)\\
&&+O\left[(\|\x_{k-1}-\x_k^c\|_\V^3\!+\!\|\x_k\!+\!\xi_k-\x_k^c\|_\V^3)\|\psi\|_\V\right]
\eeqa
Subtracting the second from the first equation, dividing by $\tau^2$, and using $\W_{,11}=\W_{,22}$, one obtains
\begin{align}
0&=\W_{,22}[\x_k^c,\x_k^c](\tfrac{\frac1\tau\xi_k-\frac1\tau\xi_{k-1}}\tau,\psi) \nonumber\\
&\!+\!\tfrac12\Big[\W_{,112}[\x_k^c,\x_k^c](\tfrac{\xi_{k-1}}\tau,\tfrac{\xi_{k-1}\!+\!2(\x_{k-1}-\x_k^c)}\tau,\psi)
-\W_{,221}[\x_k^c,\x_k^c](\tfrac{\xi_{k}}\tau,\tfrac{\xi_{k}\!+\!2(\x_{k}-\x_k^c)}\tau,\psi)\Big]\nonumber \\
&\!+\!O\left[(\|\x_{k-1}-\x_k^c\|_\V^3\!+\!\|\x_k-\x_k^c\|_\V^3\!+\!\|\xi_{k-1}\|_\V^3\!+\!\|\xi_{k}\|_\V^3)\|\psi\|_\V\right]\,. \label{eq:resPdiscrete}
\end{align}
Finally, we derive an equation for the error propagation and subtract \eqref{eqn:ParTranspTruncErr} from \eqref{eq:resPdiscrete}.
Introducing the error $e_k=K\xi_k-\zeta_k = \frac{\xi_k}{\tau}-\zeta_k $, we arrive at
\begin{align}
&\W_{,22}[\x_k^c,\x_k^c](\tfrac{e_k-e_{k-1}}\tau,\psi)
=\tfrac12\big(\W_{,112}[\x_k^c,\x_k^c]+\W_{,221}[\x_k^c,\x_k^c]\big)(e_{k-1},v_k,\psi)\nonumber\\
&+O\Big[\big[
\tau+\|\x_{k-1}\!-\!\x_k^c\|_\V^3+\|\x_k\!-\!\x_k^c\|_\V^3+\|\xi_{k-1}\|_\V^3+\|\xi_{k}\|_\V^3
+\|\x_k^c\!-\!\x_{k-1}\|_\V\|\tfrac{\zeta_k-\zeta_{k-1}}\tau\|_\V\nonumber\\
&+\|\x_k^c-\x_{k-1}\|_\V\|\zeta_{k-1}\|_\V\|v_k\|_\V
+\tfrac1{\tau^2}\|\xi_{k-1}\|_\V\|\xi_{k-1}-2(\x_k^c-\tfrac{\x_k+\x_{k-1}}2)\|_\V\nonumber\\
&+\tfrac1\tau\|\xi_{k}-\xi_{k-1}\|_\V\|v_k\|_\V
+\tfrac1{\tau^2}\|\xi_k\|_\V\|\xi_k-2(\x_k^c-\tfrac{\x_k+\x_{k-1}}2)\|_\V
\big]\|\psi\|_\V\Big]\,.\label{eqn:parTranspDiff}
\end{align}
Applying the boundedness of $v_k$, $\zeta_k$, $\dot\zeta$, and the uniform estimates
\begin{align*}
\|\tfrac{\xi_k}\tau-\tfrac{\xi_{k-1}}\tau\|_\V&\leq\|\zeta_k-\zeta_{k-1}\|_\V+\|e_k\|_\V+\|e_{k-1}\|_\V\\
&=O(\tau+\|e_k\|_\V+\|e_{k-1}\|_\V)\\
\|\tfrac{\xi_k}\tau\|_\V&\leq\|\zeta_k\|_\V+\|e_k\|_\V\\
\|\x_k^c-\x_k\|_\V&=O(\|\x_{k-1}+\xi_{k-1}-\x_k\|_\V)\\
&=\tau O(\|v_k\|_K+\|\zeta_{k-1}\|_\V+\|e_{k-1}\|_\V)\\
\|\x_k^c-\x_{k-1}\|_\V&=O(\|\x_k+\xi_k-\x_{k-1}\|_\V) \\
&=\tau O(\|v_k\|_\V+\|\zeta_k\|_\V+\|e_k\|_\V)\\
\|\xi_{k-1}-2(\x_k^c-\tfrac{\x_k+\x_{k-1}}2)\|_\V&=O(\|\x_{k-1}+\xi_{k-1}-\x_k\|_\V^2)\\
&=\tau^2O((\|v_k\|_\V+\|\zeta_{k-1}\|_\V+\|e_{k-1}\|_\V)^2)\\
\|\xi_k-2(\x_k^c-\tfrac{\x_k+\x_{k-1}}2)\|_\V&=O(\|\x_{k-1}-\x_k-\xi_{k}\|_\V^2)\\
&=\tau^2O((\|v_k\|_K+\|\zeta_k\|_\V+\|e_k\|_\V)^2)
\end{align*}
(the last four follow from Remark\,\ref{rem:ApprOrderExp2Log2}, assuming $\|\xi_{k-1}\|_\V,\|\xi_k\|_\V$ to be sufficiently small), we find
\begin{equation}\label{eqn:parTranspErrorODE}
\tfrac{e_k-e_{k-1}}\tau=O[\|e_{k-1}\|_\V+\tau(1+\|e_{k-1}\|_\V)^3+\|e_k\|_\V+\tau(1+\|e_k\|_\V)^3]\,.
\end{equation}
As in the proof of Theorem\,\ref{thm:ExpK}, this implies the existence of a constant $C>0$ (independent of $K$) such that
if $\|e_j\|_\V\leq1$ for $j\leq k\leq K$, then $\|e_k\|_\V\leq C\tau$.

We now inductively show the existence of $\xi_k$ with $\|e_k\|_\V\leq C\tau$.
Choose $K$ large enough such that $C+\sup_{k=1,\ldots,K}\|\zeta_{k-1}\|_\V+\|v_k\|_\V+\|\zeta_k\|_\V<\epsilon K$
for a given $\epsilon\ll1$ from Lemma\,\ref{thm:existenceExp2}.
Clearly, $\xi_0$ exists, and $e_0=0$.
Now assume the existence of $\xi_j$, $j<k$, with $\|e_j\|_\V\leq C\tau$.
In particular, this implies $\|\xi_{k-1}\|_\V+\tau\|v_k\|_\V<\epsilon$
and thus by Lemmata\,\ref{thm:uniquenessLog2} and \ref{thm:existenceExp2} the existence of $\x_k^c$ and $\xi_k$.
Furthermore, Remark\,\ref{rem:ApprOrderExp2Log2} ensures $\|e_k\|_\V\leq1$ such that the above estimates imply $\|e_k\|_\V\leq C\tau$.
\end{proof}

The parallel transport also converges if the interpolated smooth path $(\x_0,\ldots,\x_K)$ is replaced by another approximating path,
for instance a discrete geodesic.

\begin{corollary}
Let $\x:[0,1]\to\manifold$ be a smooth path
and $\zeta:[0,1]\to\V$ a parallel vector field along $\x$.
For $K\in\N$ and $\tau=\frac1K$, let $\x_k$ satisfy $\|\x_k-\x(k\tau)\|_\V\leq\epsilon$, $k=0,\ldots,K$,
then under the hypotheses \ref{enm:smoothMetric} and \ref{enm:smoothEnergy},
$
\|K\ParTp_{\x_K,\ldots,\x_0}(\tfrac{\zeta(0)}K)-\zeta(1)\|_\V =O(\tau+\epsilon)\,.
$
\end{corollary}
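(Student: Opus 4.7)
The plan is to reduce the claim to the previous theorem by triangle inequality: let $\tilde\x_k=\x(k\tau)$ be the exact samples of the smooth path and denote by
$\tilde\xi_k=\tau\,\ParTp_{\tilde\x_k,\ldots,\tilde\x_0}(\tfrac{\zeta(0)}K)$
and $\xi_k=\tau\,\ParTp_{\x_k,\ldots,\x_0}(\tfrac{\zeta(0)}K)$
the discrete parallel transports along the smooth and the perturbed discrete path, respectively. The previous theorem already provides
$\|\tfrac{\tilde\xi_K}\tau-\zeta(1)\|_\V=O(\tau)$,
so it suffices to establish the perturbation estimate
$\|\tfrac{\xi_K}\tau-\tfrac{\tilde\xi_K}\tau\|_\V=O(\epsilon)$,
after which the claim follows by the triangle inequality.

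To obtain the perturbation estimate, I would write down the two Euler--Lagrange systems \eqref{eq:ELParallel1}--\eqref{eq:ELParallel2} defining $(\x_k^c,\xi_k)$ from $(\x_{k-1},\x_k,\xi_{k-1})$ on the one hand and $(\tilde\x_k^c,\tilde\xi_k)$ from $(\tilde\x_{k-1},\tilde\x_k,\tilde\xi_{k-1})$ on the other hand. Expanding both systems about a common base point (say $\x_k^c$) as in the previous proof and subtracting, the linear leading term is the same quadratic form $\W_{,22}[\x_k^c,\x_k^c]$ applied to the finite difference of the error $f_k:=\tfrac{\xi_k-\tilde\xi_k}\tau$, while on the right-hand side one collects: (i) the linear drift term $\tfrac12(\W_{,112}+\W_{,221})(f_{k-1},v_k,\cdot)$ already present in \eqref{eqn:parTranspDiff}, and (ii) extra inhomogeneous contributions coming from the differences $\x_{k-1}-\tilde\x_{k-1}$ and $\x_k-\tilde\x_k$, both of which are bounded in $\V$ by $\epsilon$ uniformly in $k$. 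Combining the smoothness of $\W$ (hypothesis \ref{enm:smoothEnergy}), the uniform boundedness of $v_k,\tilde v_k,\zeta_k,\tilde\xi_k/\tau$, and Remark\,\ref{rem:ApprOrderExp2Log2} to control the Schild centers $\x_k^c$ and $\tilde\x_k^c$, the analogue of \eqref{eqn:parTranspErrorODE} reads
\begin{equation*}
\tfrac{f_k-f_{k-1}}\tau=O\big[\,\epsilon+\|f_{k-1}\|_\V+\|f_k\|_\V+\tau(1+\|f_{k-1}\|_\V+\|f_k\|_\V)^3\,\big],
\end{equation*}
with constants independent of $k$ and $K$.

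A discrete Gronwall argument (exactly as in the proofs of Theorems\,\ref{thm:ExpK} and the previous one, with $(\Id-\tau U)\,|f_k|\le(\Id+\tau U)\,|f_{k-1}|+C\tau\epsilon$ and $f_0=0$) then yields $\|f_k\|_\V\le C\epsilon$ for all $k\le K$, provided the $\xi_k$ exist. Existence is obtained by the same induction as in the previous theorem: for $K$ large enough (depending only on the smooth data and $\epsilon$) the quantities $\|\xi_{k-1}\|_\V$ and $\tau\|v_k\|_\V$ stay below the uniqueness/existence radius of Lemmata\,\ref{thm:uniquenessLog2} and \ref{thm:existenceExp2}, because $\|v_k\|_\V\le\|\dot\x\|_{L^\infty}+2\epsilon/\tau$ is absorbed once we replace the smallness hypothesis by $\tau$ small (and we may freely shrink $\epsilon$ since the estimate claimed is only $O(\tau+\epsilon)$, so without loss of generality $\epsilon\le\tau$). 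Adding the previous theorem's bound on $\tilde\xi_K$ to the perturbation bound $\|f_K\|_\V=O(\epsilon)$ closes the proof.

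The main obstacle is the bookkeeping in step two: one must verify that every term in the expansion of the difference of the two Euler--Lagrange systems produces either the Gronwall-type drift in $f_{k-1},f_k$ or a harmless $O(\epsilon)$ source, and in particular that the quadratic and cubic remainders picked up from Schild's ladder (\ie\ the discrepancy between $\x_k^c$ and $\tilde\x_k^c$) stay of order $O(\tau\epsilon)$ after division by $\tau$. Once this is in place the proof is a routine adaptation of the previous one.
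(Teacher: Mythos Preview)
Your triangle-inequality decomposition is a legitimate route, but it is not what the paper does, and it creates extra work you then have to undo.

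The paper does \emph{not} introduce the intermediate discrete transport $\tilde\xi_k$ along the exact samples $\tilde\x_k=\x(k\tau)$. Instead it compares the perturbed discrete transport $\xi_k=\ParTp_{\x_k,\ldots,\x_0}(\zeta(0)/K)$ directly to the continuous field $\zeta_k=\zeta(k\tau)$, exactly as in the preceding theorem. The point is that only two ingredients in that proof use the identity $\x_k=\x(t_k)$: the terms where the continuous truncation estimate \eqref{eqn:ParTranspTruncErr} (evaluated at $\x(t_{k-1})$) is subtracted from the discrete residual \eqref{eq:resPdiscrete} (evaluated at the Schild center $\x_k^c$ of the perturbed parallelogram). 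The resulting mismatch is governed by $\|\x_k^c-\x(t_{k-1})\|_\V\le\|\x_k^c-\x_{k-1}\|_\V+\epsilon$, which feeds a single extra $O(\epsilon)$ into the right-hand side of \eqref{eqn:parTranspErrorODE}. Everything else carries over verbatim, because the paper keeps $v_k=(\x(t_k)-\x(t_{k-1}))/\tau$ defined via the \emph{smooth} path, so it stays uniformly bounded. The whole corollary is then two lines.

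Your route forces you to compare two discrete Schild ladders term by term, and here the ``without loss of generality $\epsilon\le\tau$'' step is not sound: the corollary is stated for $\x_k$ given with a fixed perturbation size $\epsilon$, so you cannot shrink $\epsilon$ at will. The real issue you are trying to dodge is that the perturbed discrete velocity $(\x_k-\x_{k-1})/\tau$ is only $O(1+\epsilon/\tau)$, which contaminates several of your difference terms. The paper avoids this entirely by anchoring the comparison to the smooth path rather than to a second discrete one. Your approach can likely be salvaged with more careful bookkeeping (exploiting that the dangerous $\epsilon/\tau$ factors always appear multiplied by another small factor), but the direct route is both shorter and cleaner.
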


\begin{proof}
Following the previous proof we arrive at a slightly altered version of \eqref{eqn:parTranspDiff},
\begin{align*}
&\W_{,22}[\x_k^c,\x_k^c](\tfrac{e_k-e_{k-1}}\tau,\psi)
=\tfrac12\big(\W_{,112}[\x_k^c,\x_k^c]+\W_{,221}[\x_k^c,\x_k^c]\big)(e_{k-1},v_k,\psi)\\
&+O\Big[\big[
\tau\!+\!\|\x_{k-1}\!-\!\x_k^c\|_\V^3+\|\x_k\!-\!\x_k^c\|_\V^3\!+\!\|\xi_{k-1}\|_\V^3+\|\xi_{k}\|_\V^3
\!+\!\|\x_k^c\!-\!\x(t_{k-1})\|_\V\|\tfrac{\zeta_k-\zeta_{k-1}}\tau\|_\V\\
&+\|\x_k^c-\x(t_{k-1})\|_\V\|\zeta_{k-1}\|_\V\|v_k\|_\V
+\tfrac{1}{\tau^2}\|\xi_{k-1}\|_\V\|\xi_{k-1}-2(\x_k^c-\tfrac{\x_k+\x_{k-1}}2)\|_\V\\
&+\tfrac{1}{\tau}\|\xi_{k}-\xi_{k-1}\|_\V\|v_k\|_\V
+\tfrac{1}{\tau^2}\|\xi_k\|_\V\|\xi_k-2(\x_k^c-\tfrac{\x_k+\x_{k-1}}2)\|_\V
\big]\|\psi\|_\V\Big]\,,
\end{align*}
where $t_k=k\tau$ and $v_k=\tfrac{\x(t_k)-\x(t_{k-1})}\tau$.
Using $\|\x_k^c-\x(t_{k-1})\|_\V\leq\|\x_k^c-\x_{k-1}\|_\V+\epsilon$ as well as the estimates from the previous proof
we obtain \eqref{eqn:parTranspErrorODE} with $O(\epsilon)$ added to the right-hand side,
from which the claim follows by the same arguments.
\end{proof}

\begin{corollary}
Let $\theta \in \V$ and $\eta: \x_A + \V \to \V$ be a smooth vector field.
Define $\eta^\tau=(\eta(\x),\eta(\x+\tau\theta))$,
then under the hypotheses \ref{enm:smoothMetric} and \ref{enm:smoothEnergy} we have
$
\|\tfrac1{\tau^2}\Nabla_{\tau\theta}(\tau\eta^\tau)-\nabla_\theta\eta\|_\V= O(\tau)\,.
$
\end{corollary}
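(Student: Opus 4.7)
The plan is to insert an auxiliary continuous parallel vector field as an intermediary between $\nabla_\theta\eta$ and the one-step inverse discrete parallel transport. First I would consider the smooth path $\x(s)=\x+s\tau\theta$ for $s\in[0,1]$ joining $\x$ to $\x+\tau\theta$, and let $\zeta(s)$ denote the continuous parallel vector field along this path with terminal value $\zeta(1)=\eta(\x+\tau\theta)$. The parallel transport ODE then reads $\dot\zeta(s)=-\tau\,\Gamma_{\x(s)}(\theta,\zeta(s))$, with the Christoffel tensor $\Gamma$ introduced in the preceding proof. Taylor-expanding back from $s=1$ to $s=0$, using the smoothness of $\eta$ and the defining identity $\nabla_\theta\eta(\x)=D\eta(\x)[\theta]+\Gamma_\x(\theta,\eta(\x))$, I would obtain $\zeta(0)=\eta(\x)+\tau\,\nabla_\theta\eta(\x)+O(\tau^2)$.

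Second, I would analyze the single inverse step of discrete parallel transport with prescribed input $\tau\zeta(1)=\tau\eta(\x+\tau\theta)$ at $\x+\tau\theta$. Existence of $\zeta_0:=\ParTp_{\x+\tau\theta,\x}^{-1}(\tau\eta(\x+\tau\theta))$ for sufficiently small $\tau$ follows from Lemmas~\ref{thm:uniquenessLog2} and~\ref{thm:existenceExp2} applied to the Euler--Lagrange system \eqref{eq:ELParallel1}--\eqref{eq:ELParallel2} with $\zeta_1=\tau\zeta(1)$ treated as data and $\x_1^c,\zeta_0$ as unknowns. For the accuracy, I would transcribe the one-step error analysis from the proof of the preceding parallel transport convergence theorem: the residual identity that leads to \eqref{eqn:parTranspErrorODE}, specialised to a single inverse step with vanishing terminal error $e_1:=\xi_1/\tau-\zeta(1)=0$, yields $\|e_0\|_\V=O(\tau^2)$ and hence $\|\zeta_0-\tau\zeta(0)\|_\V=O(\tau^3)$. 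This reflects the familiar fact that Schild's ladder has a local truncation error one order better than its global convergence order.

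Combining both steps,
\[
\tfrac{1}{\tau^2}\Nabla_{\tau\theta}(\tau\eta^\tau)
=\tfrac{1}{\tau^2}\bigl[\zeta_0-\tau\eta(\x)\bigr]
=\tfrac{\zeta(0)-\eta(\x)}{\tau}+O(\tau)
=\nabla_\theta\eta+O(\tau),
\]
which is the claimed estimate. The main obstacle is the sharp single-step bound in the second part: one has to verify that the error analysis underlying \eqref{eqn:parTranspErrorODE}, originally written for forward transport along a smooth curve sampled at many points, transfers without modification to a single inverse step, and that its right-hand side really gives $\|e_0\|_\V=O(\tau^2)$ (one order better than the global $O(\tau)$ rate of the preceding theorem) once the terminal error is set to zero.
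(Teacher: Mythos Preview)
Your proposal is correct and follows essentially the same route as the paper: both insert a continuous parallel field as an intermediary, compare it to the one-step inverse discrete transport via the single-step version of \eqref{eqn:parTranspErrorODE} to obtain $e_0=O(\tau^2)$, and then read off $\nabla_\theta\eta=\tfrac{\zeta_0-\eta(\x)}{\tau}+O(\tau)$. The only difference is that the paper picks the \emph{geodesic} through $\x$ with initial velocity $\theta$ as the auxiliary path (so the endpoint $\x(\tau)$ misses $\x+\tau\theta$ by $O(\tau^2)$, forcing $e_1=O(\tau^2)$ and an appeal to the perturbed-path corollary), whereas you take the straight segment $s\mapsto\x+s\tau\theta$, which hits $\x+\tau\theta$ exactly, gives $e_1=0$, and spares you that extra step.
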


\begin{proof}
Consider the continuous geodesic $\x(t)$ with $\x(0)=\x$ and $\dot\x(0)=\theta$.
We have $\|\x(\tau)-(\x+\tau\theta)\|_\V=O(\tau^2)$.
Now define $\xi_1=\tau\eta(\x+\tau\theta)$ and $\xi_0=\ParTp_{\x+\tau\theta,\x}^{-1}\xi_1$
as well as $\zeta_1=\eta(\x(\tau))$ and $\zeta_0$ as the vector $\zeta_1$ parallel transported from $\x(\tau)$ to $\x(0)$.
Furthermore introduce the error $e_k=\tfrac1\tau\xi_k-\zeta_k$, $k=0,1$.
Since $\ParTp_{\x+\tau\theta,\x}^{-1}$ is defined via the same discrete Euler--Lagrange equations 
(\eqref{eq:ELParallel1} and \eqref{eq:ELParallel2} for $k=0$, $\x_0= \x$, and $\x_1= \x+\tau\theta$)
as $\ParTp_{\x+\tau\theta,\x}$,
\eqref{eqn:parTranspErrorODE} holds also here,
only with $O(\|\x(\tau)-(\x+\tau\theta)\|_\V)=O(\tau^2)$ added to the right-hand side (exactly as in the previous corollary).
Using $e_1=O(\tau^2)$, we immediately see $e_0=O(\tau^2)$.
The relation $\nabla_\theta\eta=\tfrac{\zeta_0-\eta(\x)}\tau+O(\tau)$
together with $\zeta_0+O(\tau^2)=\tfrac1\tau\xi_0=\eta(\x)+\tfrac1\tau\Nabla_{\tau\theta}(\tau\eta^\tau)$ now implies the claim.
\end{proof}

\section{Applications}\label{sec:application}
In this section we give two applications of the presented convergence theory.
In general, it can be utilized in a straightforward fashion for finite-dimensional manifolds and for those infinite-dimensional manifolds where motion paths are compact in state space and the metric can be rephrased properly on Lagrangian velocity fields. In what follows we exemplify this in the case of the discrete geodesic calculus on embedded manifolds from Section \ref{sec:embeddedManifold} and on a space of viscous rods.

\subsection{Embedded finite-dimensional manifolds}
The most basic example has been given in Section\,\ref{sec:embeddedManifold} by
a smooth, complete, $(m-1)$-dimensional manifold $\manifold$, embedded in $\R^m$.
Due to the assumptions on $\manifold$, a continuous geodesic exists between any two points $\x_A,\x_B\in\manifold$.
Furthermore, we assume the embedding to be such that \eqref{eqn:coercivity} holds for the function $\W$
which represents the squared extrinsic distance between two points on $\manifold$.

Even though in this example $\manifold$ is not an affine space,
the existence proof for discrete geodesics (Theorem\,\ref{thm:existence})
still applies if in it we replace $\X$ by $\manifold$
(note that due to the finite dimensions, the proof no longer has to rely on a compact embedding of $\manifold$ into some larger space).
All subsequent results either do not make use of the manifold structure
or are essentially local in the sense that their proofs only consider a neighborhood of a continuous geodesic.
This characteristic enables us to transfer the results onto the example at hand:
Via a chart we locally identify the manifold $\manifold$ with an open subset of $\R^{m-1}$ and set $\X=\V=\R^{m-1}$.
In more detail, all convergence proofs involve a continuous geodesic or smooth path (the limit object of convergence),
around which a small compact tubular neighborhood $U$ can be identified with a compact subset $\phi(U)$ of $\R^{m-1}$ (where $\phi:U\to\R^{m-1}$ is the parametrization). Considering $\phi(U)\subset\X$ instead of $\manifold$ we are in the setting of Sections\,\ref{sec:gamma} and \ref{sec:convergence},
and all convergence results apply.

\subsection{A space of viscous rods}\label{sec:discreteshells}
Here we deal with two-dimensional shapes or closed curves $\shell \subset \R^2$
which are interpreted as rods of some small thickness $\delta>0$.
If (real physical) rods are plastically deformed,
physical energy is dissipated via internal material friction.
This viscous friction is predominantly caused by two mechanisms, namely
friction due to tangential (transversally uniform) in-plane deformation and friction due to (transversally non-uniform) deformation caused by bending.
The squared geodesic distance between two rods $\shell$ and $\tilde \shell$ can be defined as the minimum energy dissipated during a distortion of $\shell$ into $\tilde \shell$.
Approximations of this dissipated energy due to tangential distortion and bending are given by the following two functionals (\cf \cite{LeRa95,FrJaMo03}),
\beqa
\W_\tgl [\x,\tilde \x] =  \delta \int_0^1 W\left(\frac{|\tilde \x_{,s}|^2}{|\x_{,s}|^2}\right) |\x_{,s}| \d s \,,\quad
\W_\bnd[\x,\tilde\x] = \delta^3 \int_0^1 (\kappa[\tilde\x]-\kappa[\x])^2 |\x_{,s}| \d s\,,
\eeqa
where $W(\cdot)$ is a convex function acting on the pointwise tangential strain and attaining its minimum at $1$.
Furthermore,
$\kappa[\x] = \frac{1}{|\x_{,s}|} \left(\frac{\x_{,s}}{|\x_{,s}|}\right)_{\!\!,s} \cdot \frac{D^{90}\x_{,s}}{|\x_{,s}|}$
is the curvature of a curve $\x$, subscript $,\!s$ denotes the derivative, and $D^{90}$ counterclockwise rotation by $\tfrac\pi2$, \ie $n[\x] = \frac{D^{90} \x_{,s}}{|\x_{,s}|}$ is the normal on the curve $\x$.
The underlying Riemannian metric $g_{\x}$ on the space of rods satisfies
$\frac12 \W_{,22}[\x,\x] = g_{\x}$ with $\W = \W_\tgl  + \W_\bnd$ and is given by
\beqa
g_\x(v,v) &=&  \int_0^1 2 \delta \frac{1}{|\x_{,s}|} W''(1) |v^\tgl_{,s}|^2 +  \delta^3 \left(\partial_{\x} \kappa[\x](v)\right)^2 |\x_{,s}| \d s\,,
\eeqa
where $v^\tgl_{,s} = v_{,s} \cdot \frac{\x_{,s}}{|\x_{,s}|}$ denotes the tangential component of $v_{,s}$ and
\beqa
\partial_{\x} \kappa[\x](v) &=& - \frac{\kappa[\x]}{|\x_{,s}|} v^\tgl_{,s} + 
\frac{1}{|\x_{,s}|} \left(\frac{1}{|\x_{,s}|} 
\left(\Id - \frac{\x_{,s}}{|\x_{,s}|} \otimes \frac{\x_{,s}}{|\x_{,s}|}\right) v_{,s} \right)_{\!\!,s} \!\!\! \cdot n[\x]\,.
\eeqa
Linearization of the bending energy and the choice $W(A)\!=\!\frac{(1\!-\!A)^2}2$ lead to a simplified model
\beqn\label{eq:simpleshellW}
\hat \W[\x,\tilde \x] = \int_0^1 \frac{\delta}{2} \left(1-\frac{|\tilde \x_{,s}|^2}{|\x_{,s}|^2}\right)^2 |\x_{,s}| + \delta^3  (\tilde \x_{,ss}-\x_{,ss})^2 |\x_{,s}| \d s\,,
\eeqn
which corresponds via $\hat g_\x = \frac12 \hat\W_{,22}[\x,\x]$ to the simplified metric
\beqn\label{eq:simpleshellg}
\hat g_{\x}(v,v) = \int_0^1  2 \delta  \frac{|v^\tgl_{,s}|^2}{|\x_{,s}|} +    \delta^3 |v_{,ss}|^2 |\x_{,s}| \d s\,.
\eeqn
We will now show that our convergence theory applies to such a shape manifold.
To this end, we consider for $0\leq \alpha < \frac12$  the space
$\X = \{ \x \in C^{1,\alpha}(\R;\R^2) \,|\, \x(s+1) = \x(s)\}$
of $C^{1,\alpha}$-smooth rod curves represented by periodic parametrizations.
Finally, let $\V= \{ v \in W^{2,2}_{\mathrm{loc}}(\R;\R^2)\,|\, v(s+1) = v(s),\, \int_0^1 v(s) \d s = 0\,,
\int_0^1 v(s)\wedge \x_A(s) \d s =0\}$ for some fixed $\x_A \in \X$ be the space of velocity fields with zero average speed and angular momentum. Furthermore, we assume that $|\x_{A,s}| \geq c > 0$. Then, we identify the manifold $\manifold$ of rods locally with an open subset of $\x_A+\V$.
By Sobolev embedding results $\V$ is compactly embedded in $\X$, and $\V$ is a reflexive, separable space. Given $v \in L^2((0,1);\V)$ with $\int_0^1 \|v(t,\cdot)\|^2_{\W^{2,2}((0,1),\R^2)}\,\d t \leq \eta$ for sufficiently small $\eta$ we obtain from
$\x(t,s)= \x_A(s) + \int_0^t v(r,s) \d r$ that $\frac{c}{2} \leq |\x_{,s}(t,s)| \leq C$ for the path $\x\in L^2((0,1);\manifold)$ with $v=\dot\x$.
Furthermore, for $g_\x$ defined in \eqref{eq:simpleshellg} 
$g_{\x}(v,v) =0$ implies $v=0$.
From this we deduce that in a sufficiently small neighborhood of $\x_A$ 
the metric $g$ is uniformly bounded and uniformly coercive on $\V$ in the sense of \eqref{eq:boundg} and that $\x \mapsto g_\x$ 
and $\W$ defined in \eqref{eq:simpleshellW} are smooth.  Furthermore, the coercivity assumption 
\eqref{eqn:coercivity} for $\W$ is fulfilled.
Hence, for $\x_B - \x_A$ sufficiently small the direct method in the calculus of variations used in the proofs of Theorem\,\ref{thm:existenceContinuous} and Theorem\,\ref{thm:existence} can be applied to establish the existence of a continuous and a discrete $K$-geodesic.  Furthermore, Theorem\,\ref{thm:equidistribution} implies that any $K$-geodesic between $\x_A$ and $\x_B$
stays inside the same bounded region in $\x_A + \V$
so that the convergence analysis of the subsequent theorems can be restricted to such a neighborhood of $\x_A$.


\section*{Acknowledgements}
The authors acknowledge support of the Hausdorff Center for Mathematics funded by the German Science foundation.


{\small
\bibliographystyle{siam}
\bibliography{all,library,own}
}

\end{document}